%Last modified on 14/02/2015 at 8.44 PM
%Submitted to SIMAX on 13/02/2014

\documentclass[final]{siamltex}
\usepackage{amssymb, amsmath}
\usepackage{float,epsfig}
\usepackage{algpseudocode}
\usepackage{algorithm}

\newtheorem{exam}[theorem]{\bf Example}

\newcommand{\ba}{\begin{array}}
\newcommand{\ea}{\end{array}}

                   %  check this command

\newcommand{\be}{\begin{equation}}
\newcommand{\ee}{\end{equation}}
\newcommand{\beano}{\begin{eqnarray*}}
\newcommand{\eeano}{\end{eqnarray*}}

%%%%%%%%%%%%%%%%%%%%%%%%%%% new commands %%%%%%%%%

\def\C{{\mathbb C}}

\def \L{{\mathbb L}}

\def\lam{\lambda}
\def\sig{\sigma}

\def\diag{\mathrm{diag}}

\def\rank{\mathrm{rank}}

\def\rar{\rightarrow}

% defined on June 27 2014

\def \nrank{\mathrm{nrk}}
\def \ind{\mathrm{Ind}}
\def \sp{\mathrm{Sp}}
\def \poles{\mathrm{Poles}}

%\title{Fiedler Linearizations for Rational Eigenvalue Problems}
\title{Linearizations for Rosenbrock system polynomials and rational matrix functions}
%\title{Fiedler-like pencils for Rosenbrock system polynomials and rational matrix functions}

\author{Rafikul Alam \thanks{Department of Mathematics, IIT Guwahati, Guwahati - 781039, India ({\tt rafik@iitg.ernet.in, rafikul68@gmail.com }) Fax: +91-361-2690762/2582649.} \and Namita Behera \thanks{Department of Mathematics, IIT Guwahati,
Guwahati-781039, India,({\tt niku.namita@gmail.com}). }}
\begin{document}

\maketitle

\begin{abstract}
Our aim in this paper is two-fold: First, for computing zeros of a linear time-invariant (LTI) system $\Sigma$ in {\em state-space form}, we introduce a {\em ``trimmed structured  linearization"}, which we refer to as {\em Rosenbrock linearization}, of the Rosenbrock system polynomial $\mathcal{S}(\lam)$ associated with $\Sigma.$ We also introduce Fiedler-like matrices for $\mathcal{S}(\lam)$ and describe constructions of Fiedler-like pencils for $\mathcal{S}(\lam).$  We show that the Fiedler-like pencils of $\mathcal{S}(\lam)$ are Rosenbrock linearizations of the system polynomial $\mathcal{S}(\lam).$ Second, with a view to developing a direct method for solving rational eigenproblems, we introduce {\em``linearization"} of a rational matrix function. We describe a state-space framework for converting a rational matrix function $G(\lam)$ to an {\em ``equivalent"} matrix pencil $\mathbb{L}(\lam)$ of smallest dimension such that $G(\lam)$ and $\mathbb{L}(\lam)$ have the same {\em ``eigenstructure"} and we refer to such a pencil $\mathbb{L}(\lam)$ as a ``linearization" of $G(\lam).$ Indeed, by treating $G(\lam)$ as the transfer function of an LTI system $\Sigma_G$ in state-space form via state-space realization, we show that the Fiedler-like pencils of the Rosenbrock system polynomial associated with $\Sigma_G$ are  ``linearizations" of $G(\lam)$ when the system $\Sigma_G$ is both controllable and observable.

\end{abstract}

\begin{keywords} LTI system,  zeros, poles,  rational matrix function, eigenvalue, spectrum, eigenvector,  minimal realization,  matrix polynomial, matrix pencil, linearization, Fiedler pencil.
\end{keywords}

\begin{AMS}
65F15, 15A57, 15A18, 65F35
\end{AMS}

\section{Introduction} The dynamical behavior of a linear system is often governed by a set of linear differential and algebraic equations.
 We consider a linear time-invariant (LTI) system $\Sigma$ in {\em state-space form }(SSF) given by~\cite{rosenbrock70} \begin{equation} \label{rassf}
\begin{array}{ll} \Sigma:  &
\begin{array}{ll}
 E \dot{x}(t) = A x(t) + Bu(t) \\
 y(t) = C x(t) + P(\frac{d}{dt}) u(t),
\end{array}
\end{array}
 \end{equation}
where $A$ and $E$ are $r$-by-$r$ matrices with $E$ being nonsingular, $P(\lambda)$ is an $n$-by-$n$ matrix polynomial and  $ B, C $ are matrices of appropriate dimensions. The system $\Sigma$ in SSF is a prototype of general LTI systems whose dynamical behaviors are governed by matrix polynomials.  Indeed, it is shown by Rosenbrock~\cite{rosenbrock70} that a  general LTI system whose dynamical behavior is governed by a set of equations given by $$ T(\delta_t) x(t) = U(\delta_t)u(t) \mbox{ and } y(t) = V(\delta_t) x(t)+ W(\delta_t) u(t)$$ can be reduced to an LTI system in SSF such as $\Sigma$ by means of {\em strict system equivalence}, where  $T(\lam), U(\lam), V(\lam), W(\lam)$ are matrix polynomials of appropriate dimensions with $T(\lam)$ being  regular and $\delta_t :=\frac{d}{dt}$ is time-derivative operator. We, therefore, consider LTI systems only in state-space forms.

The {\em transfer function}~\cite{rosenbrock70}  of the LTI system $\Sigma$ is a rational matrix function $G(\lam)$ given by
$G(\lam) := P(\lam) + C(\lam E - A)^{-1}B.$  The  {\em zeros} of the transfer function $G(\lam)$ are called the {\em transmission zeros} of the LTI system $\Sigma$ and are defined as follows~\cite{APMA06, rosenbrock70}. If $ \diag(\phi_1(\lam)/\psi_1(\lam), \cdots, \phi_k(\lam)/\psi_k(\lam), 0, \cdots, 0)$ is the Smith-McMillan form of $G(\lam)$ then the zeros of the  polynomials $\phi_1(\lam),\ldots, \phi_k(\lam)$ are called the  {\em zeros} of the rational matrix function $G(\lam)$~\cite{rosenbrock70, vardulakis, APMA06}.

The {\em Rosenbrock system polynomial}  (also referred to as the Rosenbrock system matrix)  associated with the LTI system $\Sigma$ is an $(n+r)\times (n+r)$ matrix polynomial  $\mathcal{S}(\lam)$ given by~\cite{rosenbrock70, vardulakis}
\be \label{sg} \mathcal{S}(\lam) := \left[
                        \begin{array}{c|c}
                          P(\lam) & C \\
                          \hline
                          B & (A - \lam E) \\
                        \end{array}
                      \right]. \ee   Note that the LTI system $\Sigma$ can be rewritten as $ \mathcal{S}(\delta_t ) \left[ \begin{array}{c} u(t) \\ x(t) \end{array}\right] = \left[ \begin{array}{c} y(t) \\ 0 \end{array}\right].$  For an ansatz $\left[\ba{c} u \\ x \ea\right] e^{\lam t}$, we have  $\mathcal{S}(\lam) \left[ \begin{array}{c} u \\ x \end{array}\right] e^{\lam t} = \left[ \begin{array}{c} y(t) \\ 0 \end{array}\right].$
 Hence the ansatz  $\left[\ba{c} u \\ x \ea\right] e^{\lam t}$ produces zero output in the system $\Sigma$ when $\lam$ is an eigenvalue of $\mathcal{S}(\lam)$ and $\left[ \ba{c} u\\ x \ea \right]$ is a corresponding eigenvector.  The eigenvalues of $\mathcal{S}(\lam)$ are called {\em invariant zeros} of the LTI system $\Sigma$~\cite{APMA06, rosenbrock70}. Various zeros such as transmission zeros, invariant zeros  and { \em decoupling zeros} of LTI systems  play an important role in the analysis of Linear Systems~~\cite{rosenbrock70, vardulakis, APMA06}.

Computation of zeros of an LTI system is a major task in Linear Systems Theory.  For the special case when $P(\lam)$ in (\ref{rassf}) is a constant matrix polynomial, that is, when $P(\lam) := D$, the transmission zeros of $\Sigma$  can be computed, for example, by {\sc matlab} software command {\tt tzero}. However, for a general LTI system in SSF such as the LTI system $\Sigma,$  computation of zeros of the LTI system is a challenging task. We, therefore, consider the following problem.

\vspace{1ex}
{\sc Problem-I.}  Compute  zeros (transmission zeros, invariant zeros, and decoupling zeros) of an LTI system $\Sigma$ in SSF.
\vspace{1ex}

We develop a direct method for computing zeros especially transmission zeros and invariant zeros of an LTI system in SSF.
An obvious direct method for computing invariant zeros of the LTI system $\Sigma$ is to consider a linearization~\cite{gohberg82, mackey2006vs} of the Rosenbrock system polynomial $\mathcal{S}(\lam)$  and then solve the resulting generalized eigenvalue problem. However, such a linearization based method has serious drawbacks. Firstly, if the degree of the polynomial $P(\lam)$ is $m$ then a linearization of $\mathcal{S}(\lam)$  would result in an $m(n+r)\times m(n+r)$ matrix pencil $\L(\lam)$ which would be too large for a modest value of $m$ when $n$ and $r$ are large. Secondly, such a linearization $\L(\lam)$ would have  (spurious) eigenvalues at $\infty$  when $m>1$ even when the leading coefficient of $P(\lam)$ is nonsingular. It is well known that  infinite eigenvalues of a pencil often pose numerical difficulty in computing eigenvalues of the pencil accurately. Thus computation of eigenvalues of the pencil $\L(\lam) $ with multiple eigenvalues at infinity would be inadvisable. Thirdly, although the Rosenbrock system polynomial $\mathcal{S}(\lam)$ is a structured matrix polynomial, a linearization  $\L(\lam)$ of $\mathcal{S}(\lam)$ may not preserve the structure, that is, the pencil $\L(\lam)$ may not be the Rosenbrock system matrix of an LTI system in SSF.

To overcome the drawbacks just outlined, we introduce a trimmed structured linearization of the Rosenbrock system polynomial $\mathcal{S}(\lam)$ which we refer to as  {\em Rosenbrock linearization } of $\mathcal{S}(\lam).$ More precisely, we introduce Rosenbrock linearization of $\mathcal{S}(\lam)$ which is an $(mn+r) \times (mn+r)$ matrix pencil $\mathbb{T}(\lam)$  such that $\mathbb{T}(\lam)$ is the  Rosenbrock system matrix of an appropriate LTI system in SSF and that $\mathbb{T}(\lam)$ is equivalent to $\mathcal{S}(\lam)$ in the sense of {\em system equivalence}. We also introduce Fiedler-like pencils for the system polynomial $\mathcal{S}(\lam)$ and show that the Fiedler-like pencils are Rosenbrock linearizations of $\mathcal{S}(\lam).$ We show that a Rosenbrock linearization of $\mathcal{S}(\lam)$ deflates infinite eigenvalues of $\mathcal{S}(\lam)$ when the leading coefficient of $P(\lam)$ is nonsingular.

Analysis  of transmission zeros of LTI systems naturally leads to zeros of rational matrix functions~\cite{rosenbrock70, vardulakis, APMA06, kailath}.
On the other hand, rational eigenvalue problems arise in many applications such as in acoustic emissions of high speed trains, calculations of quantum dots, free vibration of plates with elastically attached masses, and vibrations of fluid-solid structures, see~\cite{volker2004, solo, hwang04, vossh06, vmct06, planchard82, planchard89, voss1, voss82} and references therein. For example, the rational eigenvalue problem (REP)
$$G(\lam)x := -Ax + \lambda Bx + \lam^{2}\sum \limits _{j=1}^{k} \frac{1}{\omega_{j}-\lambda}C_{j}x =0, $$ arises when a generalized linear eigenproblem is condensed exactly~\cite{volker2004}, and the REP
$$
G(\lam)x := (K - \lambda M + \sum \limits _{j=1}^{k} \frac{\lambda}{\lambda-\sigma_{j}}C_{j})x =0,
$$
arises in the study of the vibrations of fluid solid structures,
where $K=K^{T}$ and $M=M^{T}$ are positive definite and $C_{j}=C_{j}^{T}, j= 1:k,$ have low ranks~\cite{planchard82, planchard89, voss1}.

Thus computation of eigenvalues and eigenvectors of a rational matrix function is an important task which requires development of efficient numerical methods. The development of numerical method for solving a rational eigenvalue problem  is an emerging area of research~\cite{volker2004, voss05, ruhe73, hvoss04, voss04, vossh06, solo}. We therefore consider the following problem.

\vspace{1ex}
{\sc Problem-II.}   Develop a direct method for solving an REP $G(\lam)u =0,$ where $G(\lam)$ is a rational matrix function.
\vspace{1ex}

An obvious direct method to solve an REP  is to transform the REP to a polynomial eigenvalue problem (PEP) by clearing out the denominators in the rational matrix function followed by linearization of the resulting PEP to obtain a generalized eigenvalue problem (GEP). Schematically,
$$  \mbox{ REP } \longrightarrow\mbox{ PEP } \longrightarrow \mbox{ GEP }  \longrightarrow \mbox{ SOLUTION}. $$
A downside of this brute-force {\em ``polynomialization"} of an REP is that the transformation from REP to PEP may introduce spurious eigenvalues, which may be difficult to detect and remove. Moreover, the transformation from REP to PEP followed by linearization may result in a GEP of very large  dimension especially when the rational matrix function has a large number of poles~\cite{bai11}. For an illustration  of spurious eigenvalues, consider the rational matrix function \be\label{exnoevl}  G(\lam) := \left[ \begin{matrix}
1 & \frac{1}{\lam-2} \\ 0 & 1
\end{matrix}\right].\ee
Note that $G(\lam)$ does not have an eigenvalue ($\infty$ is not an eigenvalue of $G(\lam)$) whereas its polynomialization $P(\lam) := (\lam-2) G(\lam)$ has a multiple eigenvalue at $\lam :=2.$

On the other hand, nonlinear eigensolvers such as Newton method  and nonlinear Rayleigh-Ritz methods (e.g., nonlinear Arnoldi, rational Krylov, Jacobi-Davidson) may be suitable when a few eigenpairs are desired  but their convergence analysis is a challenging task, see~\cite{ruhe73, hvoss04, voss05, voss04, vossh06, solo} and references therein.

Thus a better alternative would be to ``linearize" an REP to obtain a GEP of least dimension in such a way that avoids a polynomialization of the REP.
Such a method based on {\em minimal realization} of a rational matrix function is proposed in~\cite{bai11} which avoids polynomialization of REP. Indeed, by considering a minimal realization of $G(\lam)$ of the form \be \label{rlg}  G(\lam) = \sum_{j = 0}^{m} \lambda^{j}A_{j} +C(\lam E - A)^{-1}B, \ee
where $A, E, C, B $ are constant matrices of appropriate dimensions with $E$  being nonsingular,  it is shown in~\cite{bai11} that the eigenvalues and eigenvectors of $G(\lam)$ can be computed by solving the generalized eigenvalue problem for the pencil
\begin{equation}
\label{compr} \mathcal{C}(\lam) := \lam \left[
                         \begin{array}{cccc|c}
                           A_{m} &  &  &  &  \\
                            & I_{n} &  &  &  \\
                            &  & \ddots &  &  \\
                            &  &  & I_{n} &  \\
                            \hline
                            &  &  &  & -E \\
                         \end{array}
                       \right] +
\left[ \begin{array}{cccc|c}
                  A_{m-1} & A_{m-2}& \cdots & A_{0}& C  \\
                  -I_{n} & 0 & \cdots & 0 &  \\
                   & \ddots &  & \vdots  & \\
                   & & -I_{n} &0& \\
                   \hline
                  &  &  & B & A \\
                \end{array}
              \right],
\end{equation} where the void entries represent zero entries.
The pencil $\mathcal{C}(\lam)$ is referred  to as  a {\em companion linearization} of $G(\lam)$ in~\cite{bai11} and   can be thought of as an extension of the companion linearization  \begin{equation}
\label{cpmpp} C_P(\lambda) := \lambda \left[
                                  \begin{array}{cccc}
                                    A_{m} & 0 & \cdots & 0 \\
                                    0 & I_{n} & \ddots & \vdots \\
                                    \vdots & \ddots & \ddots& 0 \\
                                    0 & \cdots & 0 & I_{n} \\
                                  \end{array}
                                \right] + \left[
                                            \begin{array}{cccc}
                                              A_{m-1} & A_{m-2} & \cdots & A_{0} \\
                                              -I_{n} & 0 & \cdots & 0 \\
                                              \vdots & \ddots & \ddots & \vdots \\
                                              0 & \cdots & -I_{n} & 0 \\
                                            \end{array}
                                          \right]
\end{equation}
of the matrix polynomial $P(\lam) := \sum_{j=0}^{m}\lam^{j}A_j $ to the case of $G(\lam).$  The pencil $C_P(\lam)$  is also  known as the first companion form of $P(\lam)$~\cite{gohberg82}. The size of the pencil $\mathcal{C}(\lam)$ is usually much smaller than that of a pencil obtained by polynomialization of $G(\lam)$ followed by linearization specially when the coefficient matrices of $G(\lam)$ have low ranks~\cite{bai11}.

The pencil $\mathcal{C}(\lam)$ has been referred to as ``companion linearization" of $G(\lam)$ in \cite{bai11}. So it is natural to  ask: {\em Is the pencil $\mathcal{C}(\lam)$ a linearization of $G(\lam)$? If yes, then in what sense does $\mathcal{C}(\lam)$ define a linearization of $G(\lam)?$ }  It is well known that the companion form $C_P(\lam)$ is a linearization  of $P(\lam)$ in the sense of unimodular equivalence~\cite{gohberg82, mackey2006vs}.

With a view to addressing {\sc Problem-II} as well as the questions raised above, we  consider yet another problem.

\vspace{1ex}
{\sc Problem-III.}  Let $ \diag(\phi_1(\lam)/\psi_1(\lam), \cdots, \phi_k(\lam)/\psi_k(\lam), 0, \cdots, 0)$ be the Smith-McMillan form of $G(\lam).$ Does there exist a matrix pencil $\mathbb{L}(\lam)$ of smallest dimension such that $\diag( I_p, \phi_1(\lam), \cdots, \phi_k(\lam), 0, \cdots, 0)$ is the  Smith form of $\mathbb{L}(\lam)$ for some appropriate $p?$ Describe a framework for construction of such a pencil when it exists.
\vspace{1ex}

The pencil $\mathbb{L}(\lam)$, when exists, can be considered as a {\em ``linearization"} of $G(\lam)$ as it preserves the spectrum (zeros) of $G(\lam)$ and the partial multiplicities of the eigenvalues. Observe that $\mathbb{L}(\lam)$ exists and is a linearization of $G(\lam)$ when $G(\lam)$ is a matrix polynomial. Consequently, the linearization in {\sc Problem-III}, when exists,  is an extension of linearization of a matrix polynomial to the case of a rational matrix function.

We provide a complete solution of {\sc Problem-III}. We show that computation of eigenvalues and eigenvectors of a rational matrix function is closely related to computation of transmission zeros of an LTI system.  We therefore embed an REP into an appropriate LTI system in SSF. More precisely,  we treat a rational matrix function $G(\lam)$ as the transfer function of an LTI system $\Sigma$ in SSF via state-space realization~\cite{rosenbrock70}. In such a case, the zeros of $G(\lam)$ become the transmission zeros of the LTI system $\Sigma.$ Thus, we compute transmission zeros of the LTI system $\Sigma$ from which we recover the eigenvalues of $G(\lam).$  We show that  a Rosenbrock linearization $\mathbb{L}(\lam)$ of the LTI system $\Sigma$ is a linearization of $G(\lam)$ in the sense described in {\sc Problem-III} when the LTI system $\Sigma$ is {\em controllable} as well as {\em observable}.  In particular, we show that the companion form $\mathcal{C}(\lam)$ in (\ref{compr}) is indeed a linearization of $G(\lam)$ in the sense described in {\sc Problem-III} when the realization of $G(\lam)$ in (\ref{rlg}) is minimal. Thus, schematically, our strategy for solution of {\sc Problem-II} can be summed as follows:
\vspace{2ex}
\begin{center}
Rational matrix function  $\longrightarrow$ State-space realization $\longrightarrow$ Linearization $\longrightarrow$ Solution.
\end{center}
 \vspace{2ex}

Finally, we mention that the set of transmission zeros of an LTI  system $\Sigma$ forms a subset of the set of invariant zeros of the LTI system and the  transmission zeros coincide with the invariant zeros when the LTI system $\Sigma$ is controllable as well as observable. Thus, {\sc Problem-II} via state-space realization is subsumed by {\sc Problem-I.} There are efficient methods for computing a (minimal) state-space realization of a rational matrix function~\cite{APMA06, kailath, rosenbrock70}. For example, the {\sc matlab}  command {\tt minreal} can be used to compute a minimal realization. We therefore focus on solving {\sc Problem-I} and address the issues in {\sc Problem-III} by assuming a minimal state-space realization.

The rest of the paper is organized as follows. Section~2 describes spectra and poles of rational matrix functions. Section~3 introduces Fiedler matrices associated with a Rosenbrock system matrix and describes construction of Fiedler pencils. Section~4  introduces Rosenbrock linearizations for Rosenbrock system matrix and shows that Fiedler pencils are indeed Rosenbrock linearizations for Rosenbrock system polynomials. Finally, Section~5 introduces linearization of a rational matrix function, which extends the notion of  linearization of a matrix polynomial to the case of a rational matrix function, and describes construction of such a linearization via minimal state-space realization of the rational matrix function. \\

{\bf Notation.} We denote by $\C[\lam]$ and $\C(\lam),$ respectively, the polynomial ring and the field of rational functions of the form $p(\lam)/{q(\lam)},$ where $p(\lam)$ and $q(\lam)$ are scalar polynomials in $\C[\lam].$  Further, we denote by $ \C^{m\times n}, \C[\lam]^{m\times n}$ and $ \C(\lam)^{m\times n},$ respectively, the vector spaces (over $\C$) of $m$-by-$n$ matrices, matrix polynomials and rational matrix functions. We denote the {\em Smith form}~\cite{rosenbrock70} of $ X(\lam) \in \C[\lam]^{m\times n}$ by $ {\mathbf{SF}}(X(\lam))$ and the {\em Smith-McMillan} form~\cite{rosenbrock70} of $ Y(\lam) \in \C(\lam)^{m\times n}$ by $ {\mathbf{SM}}(Y(\lam)).$  The normal rank~\cite{rosenbrock70} of $ X(\lam) \in \C(\lam)^{m\times n}$ is denoted by $ \nrank(X)$ and is given by $\nrank(X) := \max_{\lam}\rank(X(\lam))$ where the maximum is taken over all $\lam \in \C$ which are not poles of the entries of $X(\lam).$ We denote  the $j$-th column of the $n \times n$ identity matrix $I_n$ by $e_j$ and the transpose of a matrix $A$ by $A^T.$ We denote the Kronecker product of matrices $A$ and $B$ by $A \otimes B.$

%\end{document}

\section{Eigenvalues, zeros and poles} Let $ P(\lam) \in \C[\lam]^{m\times n}.$ If $\nrank(X) = \min(m, n)$ then $P(\lam)$ is said to be regular, otherwise $P(\lam)$ is said to be singular. A matrix polynomial $U(\lam) \in \C[\lam]^{n\times n}$ is said to be unimodular if $\det(U(\lam)),$ the determinant of $U(\lam),$ is a nonzero constant for all $\lam \in \C.$  Suppose that the Smith form~\cite{rosenbrock70} of $P(\lam)$ is given by $\mathbf{SF}(P(\lam)) = \diag(\phi_1(\lam), \ldots, \phi_k(\lam), 0, \ldots, 0),$  where $\phi_1(\lam), \ldots, \phi_k(\lam)$ are invariant polynomials (unique monic polynomials such that $ \phi_i$ divides $\phi_{i+1}$ for $ i=1, \ldots, k-1$) of $P(\lam)$   and $ k= \nrank(P).$ We define the {\em zero polynomial} $\phi_P(\lam)$ of $P(\lam)$ and the {\em spectrum} (also called the zeros) $\sp(P)$ of $P(\lam)$  by
\be \phi_{P}(\lam) := \prod _{j=1}^{k} \phi_{j}(\lam)  \,\, \mbox{ and } \,\,  \sp(P) := \{ \lam \in \C : \phi_P(\lam) =0\}.\ee
A complex number $\lam \in \C$ is said to be an eigenvalue of $P(\lam)$ if  $\rank(P(\lam)) < \nrank(P).$ It follows that $\mu $ is an eigenvalue of $P(\lam)$ if and only if $ \phi_P(\mu) = 0.$ Thus the spectrum $\sp(P)$ consists of (finite) eigenvalues of $P(\lam).$ We mention that the spectrum $\sp(P)$ as defined above does not include infinite eigenvalues of $P(\lam).$

Now consider the LTI system $\Sigma$ given in (\ref{rassf}) and the associated  Rosenbrock system polynomial $\mathcal{S}(\lam)$ given in (\ref{sg}).  Then the spectrum  $\sp(\mathcal{S}) $ of $\mathcal{S}(\lam)$ is called the {\em invariant zeros} of the LTI system $\Sigma$. If $ \lam \in \sp(\mathcal{S})$ then   a corresponding eigenvector  $[ u^T, x^T]^T$ with $ u \in \C^n$ and $ x \in \C^r$  is called an {\em invariant zero direction}  and, the components $u$ and $x$ are, respectively,  called {\em input direction} and  {\em state  direction} of the LTI system $\Sigma.$ Further, the spectrum of the matrix pencils \be \label{dcouple}  M(\lam):= \left[\begin{matrix} A-\lam E  & B \end{matrix} \right] \,\,  \mbox{ and  } \, \, N(\lam) := \left[\begin{matrix} A-\lam E \\ C\end{matrix}\right]\ee are, respectively, called the {\em input decoupling zeros}  and  {\em output decoupling zeros} of the LTI system $\Sigma.$ We refer to~\cite{APMA06, rosenbrock70, vardulakis} for more on zeros of an LTI system.

Next, let $G(\lam)\in \mathbb{C}(\lam)^{m \times n}$ be a rational matrix function. Suppose that the Smith-McMillan form $\mathbf{SM}(G(\lam))$ of $G(\lam)$  is given by~\cite{rosenbrock70, vardulakis}
\be \label{smform}
\mathbf{SM}(G(\lam)) = \diag\left( \frac{\phi_{1}(\lam)}{\psi_{1}(\lam)}, \cdots, \frac{\phi_{k}(\lam)}{\psi_{k}(\lam)}, 0_{m-k, n-k}\right).\ee
where the scalar polynomials $\phi_{i}(\lam)$ and $ \psi_{i}(\lam)$ are monic, are pairwise coprime and,  $\phi_{i}(\lam)$ divides $\phi_{i+1}(\lam)$ and $\psi_{i+1}(\lam)$ divides $\psi_{i}(\lam),$ for $i= 1, 2, \ldots, k-1$.
The polynomials $\phi_1(\lam), \ldots, \phi_k(\lam)$ and $ \psi_1(\lam), \ldots, \psi_k(\lam)$ are called {\em invariant zero polynomials} and {\em invariant pole polynomials} of $G(\lam),$ respectively. Define the {\bf zero polynomial} $\phi_G(\lam)$ and the {\bf pole polynomial} $\psi_G(\lam)$ of $G(\lam)$  by
\begin{equation} \label{zppoly}
\phi_{G}(\lam) := \prod _{j=1}^{k} \phi_{j}(\lam) \,\,\, \mbox{ and } \,\,\, \psi_{G}(\lam) := \prod _{j=1}^{k} \psi_{j}(\lam).
\end{equation}

\begin{definition} [Zeros and poles \cite{rosenbrock70}]  Let $ G(\lam) \in \C(\lam)^{m\times n}.$ Let $\phi_G(\lam)$ and $\psi_G(\lam)$ be the zero and pole polynomials of $G(\lam),$ respectively. A complex number $ \lam $ is said to be a  zero of $G(\lam)$ if $ \phi_G(\lam) =0.$
A complex number $ \lam$ is said to be a  pole of $G(\lam)$ if $\psi_G(\lam) =0.$
\end{definition}

We mention that  $G(\lam)$ is said to have a zero (pole) at $\lam = \infty$  if  $G(1/\lam)$ has a zero (pole) at $\lam = 0.$ We, however, consider only finite zeros and poles of $G(\lam)$. It follows that a complex number $ \mu$ is a pole of $G(\lam)$ if and only if $ \|G(\lam)\|_2 \rar \infty$ as $ \lam \rar \mu.$ We denote the set of poles of $G(\lam)$ by $\poles(G)$ and define the {\bf spectrum} $\sp(G)$ of $G(\lam)$ to be the set of zeros of $G(\lam).$ In other words, we set \be\label{spec} \poles(G) := \{ \lam \in \C : \psi_G(\lam) =0 \} \,\, \mbox{ and } \,\, \sp(G) :=\{ \lam \in \C : \phi_G(\lam) = 0\}.\ee
The spectrum of the transfer function of an LTI system is called  the {\em transmission zeros} of the LTI system~\cite{rosenbrock70}. Thus, the transmission zeros of the LTI system $\Sigma$ in (\ref{rassf}) are given by the spectrum of the transfer function $G(\lam) := P(\lam) + C(\lam E - A)^{-1}B.$

\begin{definition}[Eigenvalue and eigenpole] Let $ G(\lam) \in \C(\lam)^{m\times n}.$ A complex number $\lam_0 \in \C$ is said to be an {\bf eigenvalue} of $G(\lam)$ if $\rank(G(\lam_0)) < \nrank(G).$   We denote the {\em eigenspectrum} (set of eigenvalues) of $G(\lam)$ by $ \mathrm{Eig}(G).$

A complex number $\lam_0$ is said to be an {\bf eigenpole} of $G(\lam)$ if $\lam_0$ is a pole of $G(\lam)$ and there exists $v(\lam) \in \C^{n}[\lam]$ such that $v(\lam_0) \neq 0$ and  $\lim_{\lam \rightarrow \lam_{0}} G(\lam)v(\lam) = 0. $  We denote the set of eigenpoles of $G(\lam)$ by $ \mathrm{Eip}(G).$
\end{definition}

We mention that $G(\lam)$ is said to have an eigenvalue (eigenpole) at $\lam = \infty$ if $ G(1/\lam)$ has an eigenvalue (eigenpole) at $ \lam = 0.$  For example,  $G(\lam)$ in (\ref{exnoevl}) has no eigenvalue but has an eigenpole at $ \lam =2.$ On the other hand,  $G(\lam) := \diag(\lam, 1/\lam)$ has no eigenvalue but has eigenpoles at $0$ and $\infty.$ We, however, consider only finite eigenvalues and eigenpoles of rational matrix functions.

\begin{theorem} \label{eigpole}
Let $G(\lam) \in \C(\lam)^{m \times n}. $ Then $\sp(G) = \mathrm{Eig}(G) \cup \mathrm{Eip}(G)$ and
 \beano \mathrm{Eig}(G) &=& \{ \lam \in \C : \phi_G(\lam) = 0 \mbox{ and } \psi_G(\lam) \neq 0\}, \\
\mathrm{Eip}(G) &=& \{ \lam \in \C : \phi_G(\lam) = 0 \mbox{ and } \psi_G(\lam) = 0\},\eeano
where $\phi_G(\lam)$ and $\psi_G(\lam)$  are zero and pole polynomials of $G(\lam),$ respectively.
\end{theorem}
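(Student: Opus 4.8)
The plan is to prove the two displayed formulas for $\mathrm{Eig}(G)$ and $\mathrm{Eip}(G)$ separately; the identity $\sp(G)=\mathrm{Eig}(G)\cup\mathrm{Eip}(G)$ then requires no further argument, since by~(\ref{spec}) $\sp(G)=\{\lam\in\C:\phi_G(\lam)=0\}$ and this set is the disjoint union of $\{\lam:\phi_G(\lam)=0,\ \psi_G(\lam)\neq0\}$ and $\{\lam:\phi_G(\lam)=0,\ \psi_G(\lam)=0\}$. Throughout I would work with a Smith--McMillan decomposition $U(\lam)G(\lam)V(\lam)=D(\lam)$, where $U,V$ are unimodular matrix polynomials and $D(\lam)$ is the form $\mathbf{SM}(G(\lam))$ of~(\ref{smform}), and I would repeatedly use four elementary facts: (i) a unimodular matrix polynomial is invertible at \emph{every} $\lam\in\C$ and its inverse is again a matrix polynomial; (ii) $\nrank(G)=k$; (iii) $\poles(G)=\{\lam:\psi_1(\lam)=0\}=\{\lam:\psi_G(\lam)=0\}$, because $\psi_{i+1}$ divides $\psi_i$; and, crucially, (iv) $\phi_i$ and $\psi_i$ are coprime, so $\phi_i(\lam_0)$ and $\psi_i(\lam_0)$ are never simultaneously zero.

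For $\mathrm{Eig}(G)$: an eigenvalue must be a point $\lam_0$ at which $G(\lam_0)$ is a genuine matrix, i.e.\ $\lam_0\notin\poles(G)$, so $\psi_G(\lam_0)\neq0$ and hence $\psi_i(\lam_0)\neq0$ for all $i$. For such $\lam_0$, fact~(i) gives $\rank G(\lam_0)=\rank D(\lam_0)$, and $\rank D(\lam_0)$ is the number of indices $i\le k$ with $\phi_i(\lam_0)\neq0$. Together with~(ii) this says $\rank G(\lam_0)<\nrank(G)$ precisely when $\phi_i(\lam_0)=0$ for some $i$, i.e.\ precisely when $\phi_G(\lam_0)=0$; this is the asserted description of $\mathrm{Eig}(G)$.

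For $\mathrm{Eip}(G)$: here $\lam_0$ is a pole, so $\psi_G(\lam_0)=0$ holds automatically, and the content is the equivalence ``a polynomial $v$ as in the definition of eigenpole exists $\iff\phi_G(\lam_0)=0$''. For the direction $\Longleftarrow$, pick $i_0$ with $\phi_{i_0}(\lam_0)=0$; by~(iv), $\psi_{i_0}(\lam_0)\neq0$. Let $v(\lam)$ be the $i_0$-th column of $V(\lam)$, a polynomial vector with $v(\lam_0)\neq0$ by~(i); then $G(\lam)v(\lam)=U(\lam)^{-1}D(\lam)V(\lam)^{-1}v(\lam)$ is $\phi_{i_0}(\lam)/\psi_{i_0}(\lam)$ times the $i_0$-th column of $U(\lam)^{-1}$, which tends to $0$ as $\lam\to\lam_0$ since $\phi_{i_0}(\lam_0)=0\neq\psi_{i_0}(\lam_0)$; thus $\lam_0\in\mathrm{Eip}(G)$. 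For $\Longrightarrow$, given a polynomial $v$ with $v(\lam_0)\neq0$ and $G(\lam)v(\lam)\to0$, set $w(\lam):=V(\lam)^{-1}v(\lam)$, a polynomial vector with $w(\lam_0)\neq0$ by~(i); from $U(\lam)G(\lam)v(\lam)=D(\lam)w(\lam)$ one gets, letting $\lam\to\lam_0$, that $D(\lam)w(\lam)\to0$, i.e.\ $\big(\phi_i(\lam)/\psi_i(\lam)\big)w_i(\lam)\to0$ for every $i\le k$. Choosing $i_0\le k$ with $w_{i_0}(\lam_0)\neq0$, fact~(iv) excludes $\psi_{i_0}(\lam_0)=0$ (that entry would otherwise be unbounded near $\lam_0$), so the limit of the $i_0$-th entry equals $\phi_{i_0}(\lam_0)w_{i_0}(\lam_0)/\psi_{i_0}(\lam_0)$, which vanishes only if $\phi_{i_0}(\lam_0)=0$; hence $\phi_G(\lam_0)=0$.

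The step I expect to need the most care is this forward direction for eigenpoles: one must pass to the diagonal witness $w(\lam)$ and split the limit, applying continuity only to the polynomial factors $U(\lam)^{-1}$ and $V(\lam)^{-1}$ that stay regular at $\lam_0$, and then secure an index $i_0\le k$ with $w_{i_0}(\lam_0)\neq0$ — equivalently, check that $v(\lam_0)$ is not absorbed by the null columns of $D(\lam)$, which is automatic when $\nrank(G)=n$ (in particular for regular square $G$). Fact~(iv) — the coprimality of $\phi_i$ and $\psi_i$, which keeps a zero and a pole of the same scalar factor from colliding — is the single ingredient used in every part of the argument, and it is what makes the clean split in the statement possible.
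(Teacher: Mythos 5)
Your proposal is correct and takes essentially the same route as the paper's own proof: a Smith--McMillan decomposition together with the coprimality of $\phi_i$ and $\psi_i$, with the eigenpole witness transported through the unimodular factors exactly as the paper does (your $U(\lam)G(\lam)V(\lam)=D(\lam)$ versus the paper's $G(\lam)=U(\lam)\,\mathbf{SM}(G(\lam))\,V(\lam)$ is only a notational difference). The one subtlety you flag --- securing an index $i_0\le k$ with $w_{i_0}(\lam_0)\neq 0$, which is automatic when $\nrank(G)=n$ --- is genuinely present but is passed over silently in the paper's proof as well, so your treatment is, if anything, slightly more careful than the original.
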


\begin{proof}
Suppose that $\mathbf{SM}(G(\lam)) = \diag \left(\phi_{1}(\lam)/\psi_{1}(\lam), \ldots, \phi_{k}(\lam)/\psi_{k}(\lam), 0, \ldots, 0\right).$ Then  $\phi_{G}(\lam) = \prod^k_{j=1}\phi_j(\lam)$ and $\psi_{G}(\lam) = \prod^k_{j=1}\psi_j(\lam)$. Suppose that $\lam_0 \in \mathrm{Eig}(G).$ Then we have $ \rank (G(\lam_0)) < k = \nrank(G).$ Consequently, $ \phi_{i}(\lam_0) = 0$ for some $i$ and hence  $ \phi_{G}(\lam_0) = 0.$  Since  $ \phi_i$ and $\psi_i$ are coprime, it follows that $ \psi_i(\lam_0) \neq 0$ for $i= 1:k$ and hence $ \psi_G(\lam_0) \neq 0.$ Conversely, if $\phi_G(\lam_0) =0$ and $ \psi_G(\lam_0) \neq 0$ then $ \phi_i(\lam_0) $ for some $i$ and hence $\rank(G(\lam_0)) < k = \nrank(G)$ showing that $ \lam_0 \in \mathrm{Eig}(G).$

Next, let $\lam_0 \in \mathrm{Eip}(G).$ Then there exists $v(\lam) \in \C^{n}[\lam]$ such that $v(\lam_0)\neq 0$ and $\lim_{\lam \rightarrow \lam_{0}} G(\lam)v(\lam) = 0. $  By Smith-McMillan form,  $G(\lam) = U(\lam)\,\mathbf{SM}(G(\lam))\, V(\lam) $ for some unimodular matrix polynomials $U(\lam)$ and $V(\lam).$ Hence
\begin{equation}
U^{-1}(\lam) G(\lam) v(\lam) = \mathbf{SM}(G(\lam)) V(\lam) v(\lam) . \label{spgzoq}
\end{equation}
Setting  $ x(\lam) := V(\lam)v(\lam)$ it follows that $x(\lam) \rightarrow x(\lam_{0})= V(\lam_{0})v(\lam_{0}) \neq 0$. Hence by  (\ref{spgzoq}) we have
$\frac{\phi_{i}(\lam)}{\psi_{i}(\lam)} x_{i}(\lam) \rightarrow 0$ for $i = 1:k$. Since $x(\lam_{0}) \neq 0$, we have  $x_{i}(\lam_{0}) \neq 0$ for  some $i$. This shows that as $\lam \rar \lam_0,$ we have
$$ \frac{\phi_{i}(\lam)}{\psi_{i}(\lam)}= \frac{1}{x_{i}(\lam)} \frac{\phi_{i}(\lam)}{\psi_{i}(\lam)} x_{i}(\lam) \rar 0. $$
Since $\phi_{i}$ and $\psi_{i}$ are coprime, $\psi_{i}(\lam_{0}) \neq 0$ and $\phi_{i}(\lam) \rightarrow \phi_{i}(\lam_{0}) = 0$ for some $i$. Hence $\phi_{G}(\lam_0) = 0.$ Also $ \psi_G(\lam_0) =0$ since $\lam_0$ is a pole.

Conversely, suppose that $\phi_{G}(\lam_0) = 0$ and $\psi_G(\lam_0) =0.$ Define $u(\lam) := V^{-1}(\lam) e_{i}, $ where $U(\lam)$ and $V(\lam)$ are as in (\ref{spgzoq}). Then $$G(\lam) u(\lam) = U(\lam) \mathbf{SM}(G(\lam)) e_{i} = U(\lam) \frac{\phi_{i}(\lam)}{\psi_{i}(\lam)} e_{i} \rightarrow 0 \text{ as }\lam \rightarrow \lam_{0},$$ which shows that $\lam_{0} \in \mathrm{Eip}(G).$  Hence the proof.
\end{proof}

Thus the spectrum of a rational matrix function $G(\lam)$ consists of eigenvalues and eigenpoles. We define partial multiplicities of zeros (eigenvalues and eigenpoles) of a rational matrix function as follows.

\begin{definition}[Partial multiplicities of zeros] Let $ G(\lam) \in \C(\lam)^{m\times n}$ and $ \phi_1(\lam), \ldots, \phi_k(\lam)$ be invariant zero polynomials of $G(\lam).$ Let $\lam_0 \in \sp(G). $ Then $\phi_{G}(\lam_0) = 0$ and $\phi_{i}(\lam) = (\lam - \lam_0)^{\gamma_i}d_{i}(\lam)$ with $d_{i}(\lam_0) \neq 0$ and $\gamma_i \geq 0$ for $i = 1:k. $ The index tuple $\ind_{\phi}(\lam_0, G) := (\gamma_1, \ldots, \gamma_k)$  is called the  multiplicity index of $G$ at $\lam_0$  and satisfies the condition $0\leq \gamma_1 \leq \gamma_2 \leq \cdots \leq \gamma_k.$ The nonzero components in $\ind_{\phi}(\lam_0, G)$ are called the partial multiplicities of $\lam_0$ as a zero of $G(\lam). $ The factors $(\lam - \lam_0)^{\gamma_i}$ with $\gamma_i \neq 0$ are called the  elementary divisors of $G(\lam)$ at $\lam_0$. The algebraic multiplicity of $\lam_0$ is defined by
\begin{align*}
a_{\phi}(\lam_0) &:= \gamma_1 + \gamma_2+ \cdots+ \gamma_k
%& = \text{Sum of the nonzero components }\\
= \text{multiplicity of } \lam_0 \text{ as a root of } \phi_{G}(\lam).
\end{align*}
If $a_{\phi}(\lam_0) = 1$ then $\lam_0$ is called a simple zero of $G(\lam). $\\
\end{definition}

Similarly, we define  partial multiplicities of poles of a rational matrix function as follows.

\begin{definition}[Partial multiplicities of poles] Let $ G(\lam) \in \C(\lam)^{m\times n}$ and $ \psi_1(\lam), \ldots, \psi_k(\lam)$ be invariant pole polynomials of $G(\lam).$ Let $\lam_0 \in \poles(G)$. Then $\psi_{G}(\lam_0) = 0 $ and $\psi_{i}(\lam) = (\lam - \lam_0)^{\alpha_i}q_{i}(\lam)$ with $q_{i}(\lam_0) \neq 0$ and $\alpha_i \geq 0$ for $i = 1, 2, \ldots, k. $ The index tuple $\ind_{\psi}(\lam_0, G) := (\alpha_k, \alpha_{k-1} \ldots, \alpha_1)$ is called the  multiplicity index  of $G$ at $\lam_0$ and satisfies the condition $\alpha_k \leq  \alpha_{k-1} \leq \cdots \leq  \alpha_1$. The nonzero components in $\ind_{\psi}(\lam_0, G)$ are called the partial multiplicities of $\lam_0$ as a pole of $G(\lam). $ The factors $(\lam - \lam_0)^{\alpha_i}$ with $\alpha_i \neq 0$ are called the  elementary divisors of $G(\lam)$ at the pole $\lam_0$. The  algebraic multiplicity of $\lam_0$ is defined by
\begin{align*}
a_{\psi}(\lam_0) &:= \alpha_1 + \alpha_2+ \cdots+ \alpha_k
%& = \text{Sum of the nonzero exponents } \\
 = \text{multiplicity of } \lam_0 \text{ as a root of } \psi_{G}(\lam).
\end{align*}
If $a_{\psi}(\lam_0) = 1$ then $\lam_0$ is called a  simple pole of $G(\lam). $ \\
\end{definition}

 We remark that if $ \lam_0 \in \mathrm{Eip}(G)$ then  $\ind_{\phi}(\lam_0, G) \neq 0$ and $\ind_{\psi}(\lam_0, G) \neq 0$. In such a case, $\lam_0$ has two multiplicity indices, namely, the multiplicity index as a zero and the multiplicity index as a pole. In general $\ind_{\phi}(\lam_0, G) \neq \ind_{\psi}(\lam_0, G). $

\begin{exam}
Consider $G(\lam) := \left[
                      \begin{array}{cc}
                        \frac{1}{\lam (\lam-2)^2} &  \\
                         & \frac{\lam -2}{\lam} \\
                      \end{array}
                    \right]. $ Then $ \lam = 2$ is an eigenpole with  $\ind_{\phi}(2, G) = (0, 1)$ and $\ind_{\psi}(2, G) = (0, 2). $ Hence $\ind_{\phi}(2, G) \neq  \ind_{\psi}(2, G).$
 $\blacksquare$ \\
\end{exam}

\section{Fiedler pencils for Rosenbrock system polynomial} For the rest of the paper, we consider the linear time-invariant (LTI) system $\Sigma$ given in (\ref{rassf}), that is,
$$
\begin{array}{ll} \Sigma:  &
\begin{array}{ll}
 E \dot{x}(t) = A x(t) + Bu(t) \\
 y(t) = C x(t) + P(\frac{d}{dt}) u(t),
\end{array}
\end{array}
$$
where $E \in \C^{r\times r}$ is nonsingular and $P(\lam) \in \C^{n \times n}[\lam]$. Also, unless stated otherwise,
\be \label{str}
\mathcal{S}(\lam) := \left[
                       \begin{array}{c|c}
                        P(\lam) & C \\
                        \hline
                         B & (A - \lam E) \\
                          \end{array}
                          \right] \mbox{ and }  G(\lam) := P(\lam) + C(\lam E - A)^{-1}B\ee
will denote the Rosenbrock system polynomial and the transfer function
of the LTI system $\Sigma,$ respectively. We also refer to $\mathcal{S}(\lam)$ as the Rosenbrock system matrix of the LTI system $\Sigma.$
We define the {\bf degree} of the Rosenbrock system polynomial $\mathcal{S}(\lam)$ to be the degree of the matrix polynomial $P(\lam).$ For the rest of the paper, we assume that the degree of $P(\lam)$ is $m$ and $P(\lam)$  is given by
\be \label{poly} P(\lam) := \sum_{j=0}^{m}\lam^{j} A_j \,\,\,\,  \mbox{ with } A_m \neq 0. \ee

We now introduce Fiedler pencils of $\mathcal{S}(\lam)$  which are potential candidates for {\em ``trimmed structured linearizations"} of $\mathcal{S}(\lam)$ and  describe their constructions. We mention that although the pencil $A-\lam E$ in $\mathcal{S}(\lam)$ is assumed to be regular and $E$ to be nonsingular,  which are important for the associated LTI state-space system, these assumptions are irrelevant for construction of Fiedler pencils of $\mathcal{S}(\lam)$. The construction of a Fiedler pencil described below holds good even when $\mathcal{S}(\lam)$ and  $P(\lam)$ are singular polynomials and $A-\lam E$ is  a singular pencil.

For a ready reference, we briefly describe Fiedler matrices associated with $P(\lam)$  and refer to~\cite{TDM} for further details. We closely follow the notational conventions used in~\cite{TDM}. The  $nm \times  nm$  matrices $ M_0,\ldots, M_m$  given by
\begin{equation}
 M_{0} := \left[
                   \begin{array}{cc}
                     I_{(m-1)n} &  \\
                      & -A_{0} \\
                   \end{array}
                 \right] \label{0mmfp}, \,\, M_{m} := \left[
          \begin{array}{cc}
            A_{m} &  \\
             & I_{(m-1)n} \\
          \end{array}
        \right]
\end{equation}
and
\begin{equation}
M_{i} := \left[
  \begin{array}{cccc}
    I_{(m-i-1)n} &  & & \\
     & -A_{i} & I_{n} & \\
     & I_{n} & 0  &  \\
     &   &   & I_{(i-1)n}\\
  \end{array}
\right],  \indent i= 1, \ldots, m-1, \label{imfp}
\end{equation}
are called Fiedler matrices associated with $P(\lam).$  The Fiedler matrices $M_i$ and $M_j$ commute when $|i-j| >1,$ that is,
  $M_{i}M_{j} = M_{j}M_{i}$ when $|i - j| > 1.$ Further, for $i= 1, \ldots, m-1$,  each $M_{i}$ is invertible and
\begin{equation}
M_{i}^{-1} = \left[
  \begin{array}{cccc}
    I_{(m-i-1)n} &  & & \\
     & 0 & I_{n} & \\
     & I_{n} & A_{i}  &  \\
     &   &   & I_{(i-1)n} \\
  \end{array}
\right]. \label{ifp}
\end{equation}

Let  $\sigma : \{0, 1, \ldots, m-1\}\rightarrow \{ 1, 2, \ldots, m\}$ be a bijection. Then $M_{\sigma}$ denotes the product of Fiedler matrices given by
$$M_{\sigma} := M_{\sigma^{-1}(1)}M_{\sigma^{-1}(2)}\cdots M_{\sigma^{-1}(m)}. $$  Note that $\sigma(i)$ describes the position of the factor $M_{i}$ in the product $M_{\sigma}$, that is, $\sigma(i) = j$ means that $M_{i}$ is the $j$-th factor in the product. For convenience, we set   $M_{\emptyset} := I_{nm},$ where $\emptyset$ is the empty set. The $nm\times nm$ pencil $L_\sigma(\lam)$ given by  $$L_{\sig}(\lam) := \lam M_m - M_\sig$$ is called the Fiedler pencil of $P(\lam)$ associated with $\sig,$ see~\cite{TDM, AV}.

We now introduce Fiedler-like matrices associated with the Rosenbrock system polynomial $\mathcal{S}(\lam)$ and construct Fiedler-like pencils for $\mathcal{S}(\lam).$  Recall from (\ref{str}) and (\ref{poly}) that $\mathcal{S}(\lam)$ is an $(n+r)\times (n+r)$ matrix polynomial of degree $m.$
 %(the degree of $\mathcal{S}(\lam)$ is defined to be the degree of $P(\lam)$).

\begin{definition}[Fiedler matrices]  Define the  $(nm +r) \times (nm+ r)$ matrices $\mathbb{M}_0, \ldots, \mathbb{M}_m$ by
\begin{equation}
\mathbb{M}_{0} := \left[
                       \begin{array}{c|c}
                         M_{0} & -e_{m} \otimes C \\
                         \hline
                         -e_{m}^{T}\otimes B & -A \\
                       \end{array}
                     \right], \indent \mathbb{M}_{m} := \left[
                                        \begin{array}{c|c}
                                          M_{m} & 0 \\
                                          \hline
                                          0 & -E \\
                                        \end{array}
                                      \right] \label{0mmfr}
\end{equation}
and
\begin{equation}
\mathbb{M}_{i} := \left[
                       \begin{array}{c|c}
                         M_{i} & 0 \\
                         \hline
                         0 & I_{r} \\
                       \end{array}
                     \right], \indent i = 1, \ldots, m-1, \label{imfr}
\end{equation}
where $M_{i}, i= 0, 1, \ldots, m$ are Fiedler matrices associated with $P(\lambda)$. We refer to the matrices $\mathbb{M}_{i}, i= 0, 1, \ldots, m,$ as the  Fiedler matrices  associated with $\mathcal{S}(\lam)$.
\end{definition}

Observe that unlike in the case of Fiedler matrices associated with  $P(\lam)$, the Fiedler matrices $\mathbb{M}_0$ and $\mathbb{M}_m$ do not commute, that is,
$\mathbb{M}_0 \mathbb{M}_m  \neq \mathbb{M}_m\mathbb{M}_0.$  However, $\mathbb{M}_i$ commutes with $\mathbb{M}_j$ when $|i-j| >1$  and $|i-j| \neq m,$ that is,
\begin{equation}
\mathbb{M}_{i} \mathbb{M}_{j} = \mathbb{M}_{j} \mathbb{M}_{i} \indent\text{  for   } |i-j| > 1, \text{ except for } \mathbb{M}_{m} \text{   and  } \mathbb{M}_{0}. \label{crr}
\end{equation}
For $i= 1:m-1,$ each  $\mathbb{M}_{i}$ is invertible and
\begin{equation}
\mathbb{M}_{i}^{-1} = \left[
                             \begin{array}{c|c}
                               M_{i}^{-1} & 0 \\
                               \hline
                               0 & I_{r} \\
                             \end{array}
                           \right], \label{ifr} \mbox{   where  } M_{i}^{-1} \mbox{ is given in } (\ref{ifp}).
\end{equation}
Hence we have
\begin{equation}
\mathbb{M}_{i}^{-1} \mathbb{M}_{j}^{-1} = \mathbb{ M}_{j}^{-1} \mathbb{M}_{i}^{-1}  \indent \mbox{   for  } |i-j| > 1, \text{ except for } \mathbb{M}_{0}^{-1} \text{  and  } \mathbb{M}_{m}^{-1}. \label{icrr}
\end{equation}

\begin{definition}[Fiedler pencil] Let $\sigma : \{0, 1, \ldots, m-1\}\rightarrow \{ 1, 2, \ldots, m\}$ be a bijection.  Then the $(mn+r)\times (mn+r)$ matrix pencil $\mathbb{L}_{\sigma}(\lambda)$ given by
\begin{equation}
\mathbb{L}_{\sigma}(\lambda) := \lambda \mathbb{M}_{m} - \mathbb{M}_{\sigma^{-1}(1)}\mathbb{M}_{\sigma^{-1}(2)}\cdots \mathbb{M}_{\sigma^{-1}(m)} = \lambda \mathbb{M}_{m} - \mathbb{M}_{\sigma} \label{fpr}
\end{equation}
is said to be the Fiedler pencil of the Rosenbrock system polynomial $\mathcal{S}(\lam)$ associated with $\sig$.
\end{definition}

If $\sigma$ and $\tau$ are two bijections from $\{0, 1, \ldots, m-1\}$  to $ \{ 1, 2, \ldots, m\}$ then because of commutation relation we may have $\mathbb{M}_\sigma = \mathbb{M}_\tau.$ Consequently, a Fiedler pencil may be associated with more than one  bijection.

\begin{definition}
Let $\sig, \tau :  \{0, 1, \ldots, m-1\}\rightarrow \{ 1, 2, \ldots, m\}$ be bijections. Then $\sig$ is said to be equivalent to $\tau$ (denoted as $\sig\sim \tau$) if $\mathbb{M}_{\sigma} = \mathbb{M}_{\tau}. $
\end{definition}

\begin{exam}
Consider  $G(\lam) := \lam^{4} A_{4} + \cdots + A_{0} + C(\lam E - A)^{-1} B$ and the Fiedler pencil $\mathbb{L}_{\sig}(\lam) := \lam \mathbb{M}_{4} - \mathbb{M}_{1}\mathbb{M}_{0}\mathbb{M}_{2}\mathbb{M}_{3}$ with $\sigma^{-1} =( 1, 0, 2, 3).$  Then
$$\mathbb{L}_{\sig}(\lam) = \lam \left[
                                    \begin{array}{cc|c}
                                      A_{4} &   &   \\
                                        & I_{3n} &   \\
                                      \hline
                                        &   &  -E \\
                                    \end{array}
                                  \right] -
                               \left[
                               \begin{array}{cccc|c}
                                 -A_{3} & I_{n} & 0 & 0 & 0  \\
                                 -A_{2} & 0 & I_n & 0 & 0 \\
                                 -A_{1} & 0 & 0 & -A_{0} & -C \\
                                 I_n & 0 & 0 & 0 & 0 \\
                                 \hline
                                  0 &  0 & 0  & -B & -A \\
                               \end{array}
                             \right].
$$

Considering $\mathbb{L}_{\tau}(\lambda)  := \lambda \mathbb{M}_{4} - \mathbb{M}_{2}\mathbb{M}_{0}\mathbb{M}_{1}\mathbb{M}_{3} $ with $\tau^{-1} = (2, 0, 1, 3),$  we have
$$\mathbb{L}_{\tau}(\lambda) = \left[
      \begin{array}{cccc|c}
        A_{4} &  &  &  &  \\
         & I_{n} &  &  &  \\
         &  & I_{n} &  &  \\
         &  &  & I_{n} &  \\
        \hline
         &  &  &  & -E \\
      \end{array}
    \right] - \left[
                \begin{array}{cccc|c}
                  -A_{3} & I_{n} & 0 & 0 & 0 \\
                  -A_{2} & 0 & -A_{1} & I_{n} & 0 \\
                  I_{n} & 0 & 0 & 0 & 0 \\
                  0 & 0 & -A_{0} & 0 & -C \\
                  \hline
                  0 & 0 & -B & 0 & -A \\
                \end{array}
              \right].
$$
For the bijection $\delta$ given by $\delta^{-1} = (0,2, 3, 1), $ we have $ \mathbb{L}_{\tau}(\lam) = \mathbb{L}_{\delta}(\lam).$
This shows that  $\delta\sim \tau$.        $\blacksquare$
\end{exam}

\begin{definition}\label{dci} \cite{TDM}
Let $\sigma : \{0, 1, \ldots, m-1\} \rightarrow \{1, 2, \ldots, m\}$ be a bijection.
\begin{itemize}

\item[(1)] For $d = 0, \ldots, m-2$, we say that $\sigma$ has a consecution at $d$ if $\sigma(d) < \sigma(d+1)$ and $\sigma$ has an inversion at $d$ if $\sigma(d) > \sigma(d+1)$.

\item[(2)] The tuple CISS$(\sigma) :=(c_{1}, i_{1}, c_{2}, i_{2}, \ldots, c_{l}, i_{l})$ is called the consecution-inversion structure sequence of $\sigma$, where $\sigma$ has $c_{1}$ consecutive consecutions at $0, 1, \ldots, c_{1}-1;$ $i_{1}$ consecutive inversions at $c_{1}, c_{1}+1, \ldots, c_{1}+i_{1}-1$ and so on, up to $i_{l}$ inversions at $m-1-i_{l}, \ldots, m-2$.

\item[(3)] We denote the total number of consecutions and inversions in $\sigma$ by $c(\sigma)$ and $i(\sigma),$ respectively. Then $c(\sig) = \sum\limits_{j=1}^{l}c_{j}$, \, $i(\sig) = \sum\limits_{j=1}^{l}i_j$, and $c(\sig) + i(\sig) = m-1. $
\end{itemize}
\end{definition}

Said differently,  if $\sig : \{0, 1, \ldots, m-1\} \rightarrow \{1, 2, \ldots, m\}$ is a bijection then
$\sigma$ has a consecution at $d$ if and only if $\mathbb{M}_{d}$ is to the left of $\mathbb{M}_{d+1}$ in $\mathbb{M}_{\sigma}$, while $\sigma$ has an inversion at $d$ if and only if $\mathbb{M}_{d}$ is to the right of $\mathbb{M}_{d+1}$ in $\mathbb{M}_{\sigma}$.
Further, we have $$M_i M_j = M_j M_i \Longleftrightarrow \mathbb{M}_i \mathbb{M}_j = \mathbb{M}_j \mathbb{M}_i \,\, \mbox{ for } \,\, i, j \in \sig.$$

We now show that a  Fiedler pencil of $\mathcal{S}(\lam)$  can be constructed from a Fiedler pencil $P(\lam)$ and vice-versa. In fact, there is a bijection from the set of Fiedler pencils of $\mathcal{S}(\lam)$ to the set of Fiedler pencils of $P(\lam).$

\begin{theorem}\label{fflptflr}
Let $\sigma :\{0, 1, \ldots, m-1\} \rightarrow \{1, 2, \ldots, m\}$ be a bijection. Let $L_{\sig}(\lam)$ and $\mathbb{L}_{\sig}(\lam)$ be the Fiedler pencils of $P(\lam)$ and $\mathcal{S}(\lam)$, respectively, associated with $\sigma,$ that is, $L_{\sigma}(\lam) := \lam M_{m} - M_{\sigma}$ and $\mathbb{L}_{\sigma}(\lam) := \lam \mathbb{M}_{m} - \mathbb{M}_{\sigma}$. If $\sig^{-1} = (\sig_1^{-1}, 0, \sig_2^{-1})$ for some bijections $\sigma_{1}$ and $  \sigma_{2},$ then
$$ \mathbb{L}_{\sigma}(\lam) = \left[
      \begin{array}{c|c}
        L_{\sigma}(\lambda) & M_{\sigma_{1}}(e_{m}\otimes C) \\
        \hline
        (e_{m}^{T}\otimes B) M_{\sigma_{2}} & (A-\lam E) \\
      \end{array}
    \right]. $$
Further, if CISS$(\sig) = (c_1, i_1, \ldots, c_l, i_l)$ then
$$\mathbb{L}_{\sig}(\lam) = \left[
                               \begin{array}{c|c}
                               L_{\sig}(\lam) & e_m \otimes C \\
                               \hline
                               e_{(m-c_1)}^{T} \otimes B & A-\lam E \\
                               \end{array}
                               \right], \indent \text{ if } c_1 >0
$$ and $$\mathbb{L}_{\sig}(\lam) = \left[
                               \begin{array}{c|c}
                               L_{\sig}(\lam) & e_{(m-i_1)} \otimes C \\
                               \hline
                               e_{m}^{T} \otimes B & A-\lam E \\
                               \end{array}
                               \right],  \indent \text{ if } c_1 = 0. $$
Thus the map $\mathrm{Fiedler}(P) \longrightarrow \mathrm{Fiedler}(\mathcal{S}), $ $ \lam M_m -M_{\sig} \longmapsto \lam \mathbb{M}_m -\mathbb{M}_{\sig}$ is a bijection, where $\mathrm{Fiedler}(P)$ and $\mathrm{Fiedler}(\mathcal{S})$, respectively, denote the set of Fiedler pencils of $P(\lam)$ and $\mathcal{S}(\lam)$.
\end{theorem}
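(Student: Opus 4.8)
The plan is to extract all three assertions from the block structure of the Fiedler matrices $\mathbb{M}_0,\ldots,\mathbb{M}_m$ together with the commutation relations already recorded. For the first assertion, note that $\mathbb{M}_m$ is never a factor of $\mathbb{M}_\sigma$ (its factors are $\mathbb{M}_{\sigma^{-1}(1)},\ldots,\mathbb{M}_{\sigma^{-1}(m)}$ with each $\sigma^{-1}(j)\in\{0,\ldots,m-1\}$) and that each of $\mathbb{M}_1,\ldots,\mathbb{M}_{m-1}$ is block diagonal, $\mathbb{M}_i=\diag(M_i,I_r)$, so $\mathbb{M}_0$ is the only coupled factor that can occur. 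Writing $\sigma^{-1}=(\sigma_1^{-1},0,\sigma_2^{-1})$ we therefore have $\mathbb{M}_\sigma=\diag(M_{\sigma_1},I_r)\,\mathbb{M}_0\,\diag(M_{\sigma_2},I_r)$; multiplying the three factors out and using $M_{\sigma_1}M_0M_{\sigma_2}=M_\sigma$ shows that $\mathbb{M}_\sigma$ is the $2\times 2$ block matrix with diagonal blocks $M_\sigma$ and $-A$ and off-diagonal blocks $-M_{\sigma_1}(e_m\otimes C)$ and $-(e_m^T\otimes B)M_{\sigma_2}$. Subtracting from $\lambda\mathbb{M}_m=\diag(\lambda M_m,-\lambda E)$ and recalling $L_\sigma(\lambda)=\lambda M_m-M_\sigma$ gives the first displayed identity; this step is a routine block computation.

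For the second assertion, by the first it suffices to evaluate $M_{\sigma_1}(e_m\otimes C)$ and $(e_m^T\otimes B)M_{\sigma_2}$ from $\mathrm{CISS}(\sigma)=(c_1,i_1,\ldots,c_l,i_l)$. The elementary fact needed is that for $1\le j\le m-1$ the Fiedler matrix $M_j$ fixes $e_p\otimes X$ whenever $p\notin\{m-j,m-j+1\}$ while $M_j(e_{m-j+1}\otimes X)=e_{m-j}\otimes X$, and symmetrically $(e_p^T\otimes Y)M_j$ is fixed unless $p\in\{m-j,m-j+1\}$ with $(e_{m-j+1}^T\otimes Y)M_j=e_{m-j}^T\otimes Y$. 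If $c_1>0$, the consecution at $0$ places $M_0$ to the left of $M_1$, so $1\in\sigma_2$, $\sigma_1\subseteq\{2,\ldots,m-1\}$, and every factor of $M_{\sigma_1}$ fixes $e_m\otimes C$; while the consecutions at $0,1,\ldots,c_1-1$ force $M_1,M_2,\ldots,M_{c_1}$ to appear, in that left-to-right order, inside $M_{\sigma_2}$, and applying them in turn carries the single nonzero block of $e_m^T\otimes B$ from block $m$ successively to blocks $m-1,\ldots,m-c_1$; the only factor that could disturb this, $M_{c_1+1}$, lies by the inversion at $c_1$ to the left of $M_{c_1}$ and hence acts while the block is still above position $m-c_1$ — a position $M_{c_1+1}$ does not touch. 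Thus $M_{\sigma_1}(e_m\otimes C)=e_m\otimes C$ and $(e_m^T\otimes B)M_{\sigma_2}=e_{m-c_1}^T\otimes B$, which is the first case. The case $c_1=0$ (so $i_1\ge1$) is entirely symmetric under exchanging the roles of $\sigma_1,\sigma_2$ and of $C,B$: now $M_1,\ldots,M_{i_1}$ sit inside $M_{\sigma_1}$ and transport the nonzero block of $e_m\otimes C$ down to block $m-i_1$ (the consecution at $i_1$ keeping $M_{i_1+1}$ harmless), while $\sigma_2\subseteq\{2,\ldots,m-1\}$ leaves $e_m^T\otimes B$ untouched, giving the second case. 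Feeding these into the first assertion yields the two displayed block forms.

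For the last assertion, the first assertion shows the $(1,1)$ block of $\mathbb{M}_\sigma$ is $M_\sigma$, so $\mathbb{M}_\sigma=\mathbb{M}_\tau$ forces $M_\sigma=M_\tau$; this gives injectivity of the stated map. Conversely, if $M_\sigma=M_\tau$ then by the structure theory of Fiedler pencils (\cite{TDM}) $\tau$ arises from $\sigma$ by finitely many transpositions of Fiedler matrices $M_i,M_j$ with $|i-j|>1$, and since $M_iM_j=M_jM_i\Longleftrightarrow\mathbb{M}_i\mathbb{M}_j=\mathbb{M}_j\mathbb{M}_i$ for $i,j\in\{0,\ldots,m-1\}$ the same transpositions carry $\mathbb{M}_\sigma$ to $\mathbb{M}_\tau$, so $\mathbb{M}_\sigma=\mathbb{M}_\tau$; this gives well-definedness. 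As surjectivity is immediate from the definition of $\mathrm{Fiedler}(\mathcal{S})$, the map $\lambda M_m-M_\sigma\mapsto\lambda\mathbb{M}_m-\mathbb{M}_\sigma$ is a bijection.

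I expect the combinatorial bookkeeping in the second assertion to be the real obstacle: one has to pin down, purely from $\mathrm{CISS}(\sigma)$, into which block row (respectively block column) the unique nonzero block of $M_{\sigma_1}(e_m\otimes C)$ (respectively $(e_m^T\otimes B)M_{\sigma_2}$) is moved, and in particular to check that the factors of $M_{\sigma_1}$ and $M_{\sigma_2}$ lying outside the ``clean chains'' $M_1,\ldots,M_{c_1}$ and $M_1,\ldots,M_{i_1}$ never interfere with the transport. The first and last assertions are, by contrast, essentially formal consequences of the block structure and the commutation relations already established for the $\mathbb{M}_i$.
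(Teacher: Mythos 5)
Your proof is correct, and its overall architecture is the paper's: the first identity is obtained by multiplying out $\mathbb{M}_{\sigma}=\diag(M_{\sigma_1},I_r)\,\mathbb{M}_0\,\diag(M_{\sigma_2},I_r)$ (with $\mathbb{M}_0$ the only coupled factor), and the bijectivity claim rests on the equivalence $M_iM_j=M_jM_i\Longleftrightarrow \mathbb{M}_i\mathbb{M}_j=\mathbb{M}_j\mathbb{M}_i$, exactly as in the paper. Where you genuinely diverge is the CISS part. The paper first uses the commutativity relations to normalize the split around $\mathbb{M}_0$: when $c_1>0$ it rewrites $\mathbb{M}_{\sigma}=\mathbb{M}_{\sigma_1}\mathbb{M}_0\mathbb{M}_1\cdots\mathbb{M}_{c_1}$ with every index in $\sigma_1$ at least $c_1+1$, and when $c_1=0$ it takes $\mathbb{M}_{\sigma_1}=\mathbb{M}_{i_1}\cdots\mathbb{M}_1$; the corner blocks then follow from the clean chain computation $(e_m^{T}\otimes B)M_1\cdots M_{c_1}=e_{m-c_1}^{T}\otimes B$ (resp.\ $M_{i_1}\cdots M_1(e_m\otimes C)=e_{m-i_1}\otimes C$) with no interfering factors to track. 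You instead keep the given split $\sigma^{-1}=(\sigma_1^{-1},0,\sigma_2^{-1})$ and argue non-interference directly: the interspersed factors of index $\geq c_1+1$ (resp.\ $\geq i_1+1$) never touch the block being transported, the only candidate $M_{c_1+1}$ (resp.\ $M_{i_1+1}$) being disarmed by the inversion at $c_1$ (resp.\ the consecution at $i_1$); this analysis is correct, and trivially vacuous in the edge cases $c_1=m-1$ or $i_1=m-1$. The trade-off is clear: the paper's normalization makes the corner-block evaluation mechanical, while your version avoids the normalization step at the cost of the positional bookkeeping you flagged. For the final bijection your well-definedness argument appeals to the commutation-equivalence structure from \cite{TDM}, which is the same (equally tersely stated) ingredient behind the paper's counting claim $\#\,\mathrm{Fiedler}(P)=\#\,\mathrm{Fiedler}(\mathcal{S})$.
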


\begin{proof}
 We have $\mathbb{L}_{\sig}(\lam) = \lam \mathbb{M}_{m} - \mathbb{M}_{\sig} = \lam \mathbb{M}_{m} - \mathbb{M}_{\sig_{1}} \mathbb{M}_{0}\mathbb{M}_{\sig_{2}} $
\beano  &=& \lam \left[
              \begin{array}{c|c}
               M_{m} & 0 \\
                \hline
                0 & -E \\
              \end{array}
            \right] - \left[
                        \begin{array}{c|c}
                          M_{\sigma_{1}} & 0  \\
                          \hline
                           0 & I_{r}\\
                        \end{array}
                      \right]\left[
                               \begin{array}{c|c}
                                 M_{0} & -e_{m} \otimes C \\
                                 \hline
                                -e_{m}^{T} \otimes B & -A \\
                               \end{array}
                             \right]\left[
                                      \begin{array}{c|c}
                                        M_{\sigma_{2}} & 0 \\
                                        \hline
                                        0 & I_{r} \\
                                      \end{array}
                                    \right] \\
& = & \lam \left[
              \begin{array}{c|c}
               M_{m} & 0 \\
                \hline
                0 & -E \\
              \end{array}
            \right] - \left[
                        \begin{array}{c|c}
                      M_{\sigma_{1}} M_{0} M_{\sigma_{2}} & - M_{\sigma_{1}}(e_{m} \otimes C) \\
                          \hline
                         (-e_{m}^{T} \otimes B) M_{\sigma_{2}}& -A \\
                        \end{array}
                      \right] \\
& =& \left[
      \begin{array}{c|c}
        L_{\sigma}(\lambda) & M_{\sigma_{1}}(e_{m}\otimes C) \\
        \hline
        (e_{m}^{T}\otimes B) M_{\sigma_{2}} & (A-\lambda E) \\
      \end{array}
    \right].
\eeano
Now suppose that CISS$(\sig) = (c_1, i_1, \ldots, c_l, i_l)$.

Case $I:$ Suppose that $c_1 > 0$. Then by commutativity relation we have $\mathbb{M}_{\sig} = \mathbb{M}_{\sig_1}\mathbb{M}_0 \mathbb{M}_1 \cdots \mathbb{M}_{c_1}$ with $c_1+1 \in \sig_1$. Thus $\mathbb{M}_{\sig} = \mathbb{M}_{\sig_1} \mathbb{M}_0 \mathbb{M}_{\sig_2}$, where $\mathbb{M}_{\sig_2} = \mathbb{M}_1 \cdots \mathbb{M}_{c_1}$. Hence
\beano \mathbb{M}_{\sig}  &=& \left[
       \begin{array}{c|c}
         M_{\sig_1} &  \\
         \hline
          & I_r \\
       \end{array}
     \right]\left[
              \begin{array}{c|c}
                M_0 & -e_m \otimes C \\
                \hline
                -e_m^{T} \otimes B & -A \\
              \end{array}
            \right]\left[
                     \begin{array}{c|c}
                       M_{\sig_2} &  \\
                       \hline
                        & I_r \\
                     \end{array}
                   \right] \\  &=& \left[
                               \begin{array}{c|c}
                                 M_{\sig_1} M_0 M_{\sig_2} & M_{\sig_1} (-e_m \otimes C) \\
                                 \hline
                                 (-e_m^{T} \otimes B)M_{\sig_2} & -A \\
                               \end{array}
                             \right].
\eeano
Since $j \in \sig_1$ implies that $j \geq c_1+1$, we have $\mathbb{M}_{\sig_1} = \left[
                                                                                  \begin{array}{c|c}
                                                                                  * &  \\
                                                                                  \hline
                                                                                  & I_{c_1n} \\
                                                                                  \end{array}
                                                                                  \right]
$. This shows that $\mathbb{M}_{\sig_1} (e_m \otimes I_n) = e_m \otimes I_n$ and  $ M_{\sig_1} (-e_m \otimes C) = -e_m \otimes C$. Next, we have
\beano (e_m^{T} \otimes I_n) \mathbb{M}_1 &=& (e_m^{T} \otimes I_n) \left[
                                                               \begin{array}{ccc}
                                                                 I_{(m-2)n} &  &  \\
                                                                  & -A_1 & I_n \\
                                                                  & I_n & 0 \\
                                                               \end{array}
                                                             \right] = (e_{m-1}^{T} \otimes I_n),\\
(e_m^{T} \otimes I_n) \mathbb{M}_1 \mathbb{M}_2 &=& (e_{m-1}^{T} \otimes I_n) \left[
                                                               \begin{array}{cccc}
                                                                 I_{(m-3)n} &  & & \\
                                                                  & -A_2 & I_n & \\
                                                                  & I_n & 0 & \\
                                                                   &  &  & I_n \\
                                                               \end{array}
                                                             \right] = (e_{m-2}^{T} \otimes I_n) \eeano and so on. Thus
$(e_m^{T} \otimes I_n) \mathbb{M}_1 \mathbb{M}_2 \cdots \mathbb{M}_{c_1} = (e_{m-c_1}^{T} \otimes I_n)$. Hence $(e_m^{T} \otimes I_n) \mathbb{M}_{\sig_2} = (e_{m-c_1}^{T} \otimes I_n)$ and $(-e_m^{T} \otimes B)M_{\sig_2} = -(e_{m-c_1}^{T} \otimes B)$. Consequently, we have
$$\mathbb{L}_{\sig}(\lam) = \lam \mathbb{M}_{m} - \mathbb{M}_{\sig} = \left[
                               \begin{array}{c|c}
                               -L_{\sig}(\lam) & e_m \otimes C \\
                               \hline
                               e_{(m-c_1)}^{T} \otimes B & A-\lam E \\
                               \end{array}
                               \right].$$

Case $II:$ Suppose that $c_1 = 0$. Then $\sig$ has $i_1$ inversions at $0$. Hence by commutativity relations we have $\mathbb{M}_{\sig} = \mathbb{M}_{i_1} \cdots \mathbb{M}_1 \mathbb{M}_0 \mathbb{M}_{\sig_2} =:\mathbb{M}_{\sig_1}\mathbb{M}_0 M_{\sig_2} $ with $i_1+1 \in \sig_2$.  Hence
\beano \mathbb{M}_{\sig}  &=& \left[
       \begin{array}{c|c}
         M_{\sig_1} &  \\
         \hline
          & I_r \\
       \end{array}
     \right]\left[
              \begin{array}{c|c}
                M_0 & -e_m \otimes C \\
                \hline
                -e_m^{T} \otimes B & -A \\
              \end{array}
            \right]\left[
                     \begin{array}{c|c}
                       M_{\sig_2} &  \\
                       \hline
                        & I_r \\
                     \end{array}
                   \right]\\ & = & \left[
                               \begin{array}{c|c}
                                 M_{\sig_1} M_0 M_{\sig_2} & M_{\sig_1} (-e_m \otimes C) \\
                                 \hline
                                 (-e_m^{T} \otimes B)M_{\sig_2} & -A \\
                               \end{array}
                             \right].\eeano

Since $j \in \sig_2$ implies that $j \geq i_1+1$, we have $\mathbb{M}_{\sig_2} = \left[
                         \begin{array}{c|c}
                         * &  \\
                         \hline
                         & I_{i_1n} \\
                         \end{array}
                         \right]
$. This shows that $ (e_m^{T} \otimes I_n)\mathbb{M}_{\sig_2} = e_m^{T} \otimes I_n$. Hence $ M_{\sig_2} (-e_m^{T} \otimes B) = -e_m^{T} \otimes B$. Next, we have
\beano  \mathbb{M}_1 (e_m \otimes I_n) &=& \left[
                                     \begin{array}{ccc}
                                     I_{(m-2)n} &  &  \\
                                     & -A_1 & I_n \\
                                     & I_n & 0 \\
                                     \end{array}
                                     \right]  (e_m \otimes I_n)= (e_{m-1} \otimes I_n),\\
\mathbb{M}_2 \mathbb{M}_1 (e_m \otimes I_n) &=& \left[
                                                               \begin{array}{cccc}
                                                                 I_{(m-3)n} &  & & \\
                                                                  & -A_2 & I_n & \\
                                                                  & I_n & 0 & \\
                                                                   &  &  & I_n \\
                                                               \end{array}
                                                             \right] (e_{m-1} \otimes I_n) = (e_{m-2} \otimes I_n). \eeano  Thus
$ \mathbb{M}_{i_1}  \cdots \mathbb{M}_2\mathbb{M}_1 (e_m \otimes I_n)= (e_{m-i_1} \otimes I_n)$. Hence $\mathbb{M}_{\sig_1}(e_m \otimes I_n) = (e_{(m-i_1)} \otimes I_n)$ and $M_{\sig_1}(-e_m \otimes C) = -(e_{(m-i_1)} \otimes C)$. Consequently, we have
$$\mathbb{L}_{\sig}(\lam) = \lam \mathbb{M}_{m} - \mathbb{M}_{\sig} = \left[
                               \begin{array}{c|c}
                               -L_{\sig}(\lam) & e_{m-i_1} \otimes C \\
                               \hline
                               e_{m}^{T} \otimes B & A-\lam E \\
                               \end{array}
                               \right]. $$
Recall that for each $i, j \in \sig$, we have $M_i M_j = M_j M_i \Leftrightarrow \mathbb{M}_i \mathbb{M}_j = \mathbb{M}_j \mathbb{M}_i$. Hence it follows that $\# (\mathrm{Fiedler}(P)) = \#(\mathrm{Fiedler}(\mathcal{S}))$. This completes the proof.
\end{proof}

The companion pencil $\mathcal{C}(\lam)$ given in (\ref{compr}) is in fact a Fiedler pencil of $\mathcal{S}(\lam).$

\begin{proposition} \label{compan} Let $\mathcal{C}(\lam)$ be the companion pencil given in (\ref{compr}). Then $$ \mathcal{C}(\lambda)  =  \lambda \mathbb{M}_{m} - \mathbb{M}_{m-1}\mathbb{M}_{m-2}\cdots \mathbb{M}_{1}\mathbb{M}_{0} = \mathbb{L}_\sigma(\lam),$$ where $\sig^{-1} = (m-1, \ldots, 2, 1, 0).$

\end{proposition}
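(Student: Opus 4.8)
The plan is to read off both identities from the definition of $\mathbb{M}_\sig$ and from Theorem~\ref{fflptflr}. For the bijection $\sig$ with $\sig^{-1}=(m-1,\ldots,2,1,0)$ one has $\sig^{-1}(k)=m-k$ for $k=1,\ldots,m$, so by definition $\mathbb{M}_\sig=\mathbb{M}_{\sig^{-1}(1)}\mathbb{M}_{\sig^{-1}(2)}\cdots\mathbb{M}_{\sig^{-1}(m)}=\mathbb{M}_{m-1}\mathbb{M}_{m-2}\cdots\mathbb{M}_1\mathbb{M}_0$; this is exactly the second displayed equality $\lam\mathbb{M}_m-\mathbb{M}_{m-1}\cdots\mathbb{M}_1\mathbb{M}_0=\mathbb{L}_\sig(\lam)$, which therefore needs no argument. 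The content of the proposition is thus the first equality, $\mathcal{C}(\lam)=\lam\mathbb{M}_m-\mathbb{M}_{m-1}\cdots\mathbb{M}_1\mathbb{M}_0$.

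To obtain this I would first record the consecution--inversion structure of $\sig$: since $\sig(d)=m-d$ is strictly decreasing, $\sig$ has an inversion at every $d\in\{0,1,\ldots,m-2\}$, so $c(\sig)=0$, $\mathrm{CISS}(\sig)=(0,m-1)$, and in particular $c_1=0$ and $i_1=m-1$. Feeding $c_1=0$ into Theorem~\ref{fflptflr} gives
$$\mathbb{L}_\sig(\lam)=\left[\begin{array}{c|c} L_\sig(\lam) & e_{m-i_1}\otimes C\\ \hline e_m^T\otimes B & A-\lam E\end{array}\right]=\left[\begin{array}{c|c} L_\sig(\lam) & e_1\otimes C\\ \hline e_m^T\otimes B & A-\lam E\end{array}\right],$$
where $L_\sig(\lam)=\lam M_m-M_{m-1}\cdots M_1M_0$ is the Fiedler pencil of $P(\lam)$ attached to the same $\sig$. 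It then remains to match the blocks on the right with those of $\mathcal{C}(\lam)$ in~(\ref{compr}): the leading coefficient $\mathbb{M}_m=\diag(A_m,I_{(m-1)n},-E)$ coincides with that of $\mathcal{C}(\lam)$; the block $L_\sig(\lam)$ is the first companion form $C_P(\lam)$ of $P(\lam)$ in~(\ref{cpmpp}) (the standard fact that the all-inversions Fiedler pencil of a matrix polynomial is its first companion form, see~\cite{TDM,AV}, which one may also check directly by computing $M_{m-1}\cdots M_1M_0$, whose first block row is $(-A_{m-1},\ldots,-A_1,-A_0)$ with $I_n$ on the first block subdiagonal and zeros elsewhere); and the off-diagonal blocks $e_1\otimes C$ and $e_m^T\otimes B$ place $C$ in the top-right corner and $B$ in the last block row and $m$-th block column, exactly as in~(\ref{compr}). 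Together with the $(2,2)$ block $A-\lam E$, this gives $\mathbb{L}_\sig(\lam)=\mathcal{C}(\lam)$.

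A self-contained alternative that bypasses Theorem~\ref{fflptflr} is a direct block multiplication: since $\mathbb{M}_i=M_i\oplus I_r$ for $1\le i\le m-1$, the product $\mathbb{M}_{m-1}\cdots\mathbb{M}_1\mathbb{M}_0$ has $(1,1)$ block $(M_{m-1}\cdots M_1)M_0$, $(1,2)$ block $-(M_{m-1}\cdots M_1)(e_m\otimes C)$, $(2,1)$ block $-e_m^T\otimes B$, and $(2,2)$ block $-A$, and the only nonroutine inputs are the known form of $M_{m-1}\cdots M_1M_0$ and the telescoping identity $M_{m-1}\cdots M_1(e_m\otimes I_n)=e_1\otimes I_n$, which is the computation already carried out in Case~II of the proof of Theorem~\ref{fflptflr}. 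I do not expect a genuine obstacle: the computation is short, and the only point needing care is sign bookkeeping, namely that $\mathcal{C}(\lam)$ in~(\ref{compr}) is written with a ``$+$'' between its two terms whereas a Fiedler pencil is written as $\lam\mathbb{M}_m-\mathbb{M}_\sig$, so that $-\mathbb{M}_\sig$ is what must be matched against the constant coefficient of $\mathcal{C}(\lam)$.
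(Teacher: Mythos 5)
Your argument is correct and follows essentially the same route as the paper: compute $\mathrm{CISS}(\sig)=(0,m-1)$, apply Theorem~\ref{fflptflr} with $c_1=0$ to get the block form with $e_1\otimes C$ and $e_m^T\otimes B$, and identify $L_\sig(\lam)=\lam M_m-M_{m-1}\cdots M_1M_0$ with the first companion form $C_P(\lam)$ of (\ref{cpmpp}) via \cite{TDM}. Your added direct block-multiplication alternative and the sign remark are fine but not needed beyond what the paper does.
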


\begin{proof} Consider the Fiedler pencil $\mathbb{L}_\sigma(\lam)$ with  $\sig^{-1} = (m-1, \ldots, 2, 1, 0).$  Then it follows that CISS$(\sig) = (0, m-1).$ Hence by Theorem~\ref{fflptflr}, we have
$$ \mathbb{L}_{\sigma}(\lambda) =\left[
                             \begin{array}{c|c}
                               \lam M_m - M_{m-1}\cdots M_1M_0 & e_1\otimes C \\
                               \hline
                               e_m^{T} \otimes B & (A - \lam E) \\
                             \end{array}
                           \right] =
 \left[
                             \begin{array}{c|c}
                               C_P(\lam) & e_1\otimes C \\
                               \hline
                               e_m^{T} \otimes B & (A - \lam E) \\
                             \end{array}
                           \right],$$
 where $C_P(\lam)$ is the companion pencil of $P(\lam)$ given in (\ref{cpmpp}). The last equality follows from the fact that $C_P(\lam) = \lam M_m - M_{m-1}\cdots M_1M_0,$ see~\cite{TDM}.  This shows that $   \mathbb{L}_{\sigma}(\lambda) = \mathcal{C}(\lam)$  as desired.
\end{proof}

 The pencil $\mathcal{C}(\lam)$ is derived in \cite{bai11} and is referred to as companion linearization of $G(\lam)$. We refer to $\mathcal{C}(\lam)$ as the  {\rm first companion form} or the {\em first companion pencil}  of $\mathcal{S}(\lam).$ We also refer to $\mathcal{C}(\lam)$ as the  first companion pencil of the transfer function $G(\lam)$ associated with $\mathcal{S}(\lam)$.

We define the second companion form of $\mathcal{S}(\lam),$ which we denote by $ \mathcal{C}_2(\lam),$ by
$$\mathcal{C}_{2}(\lambda) =  \lambda \mathbb{M}_{m} - \mathbb{M}_{0}\mathbb{M}_{1}\cdots \mathbb{M}_{m-2}\mathbb{M}_{m-1} = \mathbb{L}_{\sigma}(\lambda),$$
where $\sigma^{-1} = (0, 1,  \ldots, m-1).$ Since  CISS$(\sig) = (m-1, 0)$ and  $ C_2(\lam) = \lam M_m - M_{0} \cdots M_{m-2} M_{m-1}$ is the second companion form of the polynomial $P(\lam)$, see~\cite{TDM},  by Theorem~\ref{fflptflr}, we have
\begin{align}
\mathcal{C}_{2}(\lam) &  =  \left[
                          \begin{array}{c|c}
                            C_{2}(\lam) & e_m \otimes C \\
                            \hline
                            e_1^{T} \otimes B & (A- \lam E) \\
                          \end{array}
                        \right] \nonumber \\
                       & = \lam \left[
                         \begin{array}{cccc|c}
                           A_{m} &  &  &  &  \\
                            & I_{n} &  &  &  \\
                            &  & \ddots &  &  \\
                            &  &  & I_{n} &  \\
                            \hline
                            &  &  &  & -E \\
                         \end{array}
                       \right]- \left[
                \begin{array}{cccc|c}
                  -A_{m-1} & I_{n} &  &  &   \\
                  -A_{m-2} & 0 & \ddots &  &  \\
                  \vdots & \ddots &  & I_{n} & \\
                  -A_{0} & \cdots & 0 & 0  & -C \\
                  \hline
                  -B  &    &    &   &  -A \\
                \end{array}
              \right].  \label{scfor}
\end{align}
We refer to $\mathcal{C}_{2}(\lam)$ as the {\em second companion form} of $\mathcal{S}(\lam)$ or the transfer function $G(\lam)$ associated with $\mathcal{S}(\lam)$.

It is well known that a matrix polynomial admits block pentadiagonal Fiedler pencils~\cite{TDM}. The next result characterizes block pentadiagonal Fiedler pencils of Rosenbrock system polynomials.

\begin{theorem}\label{pdpc} Let $\sigma :\{0, 1, \ldots, m-1\} \rightarrow \{1, 2, \ldots, m\}$ be a bijection. Let  $L_{\sigma}(\lambda) := \lambda M_{m} - M_{\sigma}$ and $\mathbb{L}_{\sigma}(\lam) := \lam \mathbb{M}_{m} - \mathbb{M}_{\sigma},$ respectively, be the Fiedler pencil of $P(\lam)$ and $\mathcal{S}(\lam)$ associated with $\sigma.$ Suppose that CISS$(\sigma) = (c_{1}, i_{1}, \ldots, c_{l}, i_{l}).$ Then $ \mathbb{L}_{\sigma}(\lam)$ is block pentadiagonal  if and only if $L_{\sigma}(\lam)$ is block pentadiagonal and $c_1 \leq 1$ and $i_1 \leq 1.$ In such a case, we have
$$ \mathbb{L}_{\sigma}(\lambda) = \left[
                             \begin{array}{c|c}
                                L_{\sigma}(\lambda) & e_m\otimes C \\
                              \hline
                            e_{m -1}^{T} \otimes B & (A - \lambda E) \\
                              \end{array}
                                  \right] \indent \mbox{ if } c_1>0 $$ $$\mbox{ and }\indent  \mathbb{L}_{\sigma}(\lambda) = \left[
                             \begin{array}{c|c}
                                L_{\sigma}(\lambda) & e_{m-1}\otimes C \\
                              \hline
                            e_{m}^{T} \otimes B & (A - \lambda E) \\
                              \end{array}
                                  \right] \indent \mbox{ if } c_1=0. $$
\end{theorem}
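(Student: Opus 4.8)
The plan is to read off the block structure of $\mathbb{L}_{\sigma}(\lambda)$ from Theorem~\ref{fflptflr} and then test block-pentadiagonality blockwise. Throughout, ``block pentadiagonal'' refers to the partition of $\mathbb{L}_{\sigma}(\lambda)$ into $m$ leading blocks of size $n$ followed by one trailing block of size $r$ (and of $L_{\sigma}(\lambda)$ into $m$ blocks of size $n$). By Theorem~\ref{fflptflr}, the leading $mn\times mn$ principal submatrix of $\mathbb{L}_{\sigma}(\lambda)$ is exactly $L_{\sigma}(\lambda)$, the border column of $\mathbb{L}_{\sigma}(\lambda)$ has a single nonzero block $C$, the border row a single nonzero block $B$, and the trailing block is $A-\lambda E$; moreover the block $C$ sits in block-row $m$ when $c_{1}>0$ and in block-row $m-i_{1}$ when $c_{1}=0$, while the block $B$ sits in block-column $m-c_{1}$ when $c_{1}>0$ and in block-column $m$ when $c_{1}=0$. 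Consequently $\mathbb{L}_{\sigma}(\lambda)$ is block pentadiagonal if and only if all three of the following hold: (i)~$L_{\sigma}(\lambda)$ is block pentadiagonal; (ii)~the block-row index of the block $C$ is $\geq m-1$; (iii)~the block-column index of the block $B$ is $\geq m-1$.

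I would then dispatch (ii)--(iii). If $c_{1}>0$, then (ii) is automatic (the block $C$ is in block-row $m$) and (iii) reads $m-c_{1}\geq m-1$, that is, $c_{1}\leq 1$; together with $c_{1}>0$ this forces $c_{1}=1$. If $c_{1}=0$, then (iii) is automatic and (ii) reads $m-i_{1}\geq m-1$, that is, $i_{1}\leq 1$; since $c_{1}=0$ forces $i_{1}\geq 1$ (for $m\geq 2$), this gives $i_{1}=1$. Hence $\mathbb{L}_{\sigma}(\lambda)$ is block pentadiagonal if and only if $L_{\sigma}(\lambda)$ is block pentadiagonal and ($c_{1}=1$ when $c_{1}>0$, and $i_{1}=1$ when $c_{1}=0$). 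This condition is clearly implied by ``$L_{\sigma}(\lambda)$ block pentadiagonal and $c_{1}\leq 1$ and $i_{1}\leq 1$'', and for the converse the only thing to check is that, when $c_{1}=1$, block-pentadiagonality of $L_{\sigma}(\lambda)$ already forces $i_{1}\leq 1$. Here I would invoke the structural description of block-pentadiagonal Fiedler pencils of a matrix polynomial from~\cite{TDM}: in a block-pentadiagonal Fiedler pencil $L_{\sigma}(\lambda)$, every run of consecutions or inversions of $\sigma$ after the first has length at most one; in particular the second run, which has length $i_{1}$, satisfies $i_{1}\leq 1$. (When $c_{1}=0$ nothing further is needed, since $c_{1}\leq 1$ is automatic.) This establishes the claimed equivalence.

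Finally, the two displayed forms of $\mathbb{L}_{\sigma}(\lambda)$ drop out by substitution into Theorem~\ref{fflptflr}: when $c_{1}>0$ the equivalence gives $c_{1}=1$, and the first formula of Theorem~\ref{fflptflr} becomes $\left[\begin{array}{c|c} L_{\sigma}(\lambda) & e_{m}\otimes C \\ \hline e_{m-1}^{T}\otimes B & A-\lambda E\end{array}\right]$; when $c_{1}=0$ the equivalence gives $i_{1}=1$, and the second formula becomes $\left[\begin{array}{c|c} L_{\sigma}(\lambda) & e_{m-1}\otimes C \\ \hline e_{m}^{T}\otimes B & A-\lambda E\end{array}\right]$. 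These are precisely the two cases in the statement.

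The step I expect to be the main obstacle is the one isolated above: pinning down (and citing precisely, or re-deriving) the fact that block-pentadiagonality of $L_{\sigma}(\lambda)$ forces every run of $\sigma$ beyond the first to have length at most one, so that $c_{1}=1$ already implies $i_{1}\leq 1$. If a citation from~\cite{TDM} in exactly this form is unavailable, I would argue directly: write $M_{\sigma}$ as a product in which the factors of the second run appear consecutively (using the commutation relations to move the remaining factors out of the way), and observe that a run of length $\ell\geq 2$ there creates a nonzero block at block-distance $\ell$ from the diagonal in $M_{\sigma}$, hence in $L_{\sigma}(\lambda)$, contradicting block-pentadiagonality. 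The remaining points are routine: the low-degree edge cases $m=1,2$ (where the CISS degenerates), and keeping the sign conventions of Theorem~\ref{fflptflr} straight.
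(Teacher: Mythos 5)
Your overall route is the same as the paper's: read the block structure of $\mathbb{L}_{\sigma}(\lambda)$ off Theorem~\ref{fflptflr}, note that block pentadiagonality of the bordered pencil amounts to block pentadiagonality of the $(1,1)$ block $L_{\sigma}(\lambda)$ plus the border blocks $C$ and $B$ lying within the band, and then substitute $c_{1}=1$, resp.\ $i_{1}=1$, to get the two displayed forms. You are also right --- and more explicit than the paper, whose proof simply asserts that pentadiagonality forces $c_{1}\leq 1$ and $i_{1}\leq 1$ --- that the only delicate point is the ``only if'' direction when $c_{1}=1$: there the positions of $B$ (block column $m-1$) and $C$ (block row $m$) say nothing about $i_{1}$, so $i_{1}\leq 1$ must be extracted from pentadiagonality of $L_{\sigma}(\lambda)$ itself (the issue is vacuous for $m\leq 3$).

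However, the way you propose to close that point does not stand as written. The fact you want to cite from \cite{TDM} --- that in a block pentadiagonal Fiedler pencil every run after the first has length at most one --- is not available there and is false as stated: by \cite[Example 3.2]{TDM} the product $M_{\sigma_{2}}M_{\sigma_{1}}M_{0}$ appearing at the end of the paper's Example (Case III) is block pentadiagonal, yet its CISS is $(0,2,1,1,1)$, so the second run has length $i_{1}=2$; likewise CISS $(2,1,1,1)$ gives a pentadiagonal $L_{\sigma}(\lambda)$, so only runs not containing the index $0$ are forced to be short. Your fallback argument has the right idea but its count fails in exactly the critical case: a nonzero block at block-distance $\ell$ from the diagonal does not contradict pentadiagonality when $\ell=2$, which is the case $i_{1}=2$ you must exclude. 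The correct bookkeeping is: with a consecution at $0$ and inversions at $1,\ldots,\ell$ (so $\ell=i_{1}\geq 2$), the factors $M_{\ell+1},M_{\ell},\ldots,M_{2},M_{1}$ occur in decreasing order, with $M_{0}$ somewhere to their left of $M_{1}$ (note $M_{0}$ cannot be commuted out of this stretch, only parked immediately before $M_{1}$, which is harmless); the subproduct $M_{\ell+1}M_{\ell}\cdots M_{2}M_{0}M_{1}$ then carries the block $I_{n}$ in position $(m-\ell-1,\,m)$, i.e.\ at distance $\ell+1\geq 3$, and the remaining factors $M_{j}$ with $j\geq \ell+2$ act only on the leading $m-\ell-1$ block rows and columns, so on the right they do not touch the last block column and on the left they only mix rows $1,\ldots,m-\ell-1$ (and are invertible), so the last block column of $M_{\sigma}$ stays nonzero and supported in rows $\leq m-\ell-1$, contradicting pentadiagonality. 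With that repair your proof is complete, and it in fact supplies the justification that the paper's own one-line assertion omits; as written, the key auxiliary claim is uncitable and the distance estimate is too weak at $\ell=2$.
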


\begin{proof} The proof is immediate by Theorem \ref{fflptflr}. Indeed, for pentadiagonal  $\mathbb{L}_{\sigma}(\lam),$ we must have $c_1 \leq 1$ and $ i_1 \leq 1.$ If $c_1>0$ then $c_1 = 1$ and $ e_{m -c_{1}}^{T} \otimes B = e_{m -1}^{T} \otimes B$. So by the Theorem \ref{fflptflr} we have  $$\mathbb{L}_{\sigma}(\lambda) =\left[
                             \begin{array}{c|c}
                                L_{\sigma}(\lambda) & e_m\otimes C \\
                              \hline
                            e_{m -c_{1}}^{T} \otimes B & (A - \lambda E) \\
                              \end{array}
                                  \right]
                                    = \left[
                             \begin{array}{c|c}
                                L_{\sigma}(\lambda) & e_m\otimes C \\
                              \hline
                            e_{m -1}^{T} \otimes B & (A - \lambda E) \\
                              \end{array}
                                  \right] $$ is block pentadiagonal
if and only if  $L_{\sigma}(\lambda)$ is block pentadiagonal. Similar argument holds when $c_1 = 0$ and $i_1 = 1. $
\end{proof}

Let $\mathcal{O} = M_1M_3\cdots$ be the product of odd $M_i$ factors and $\mathcal{E} = M_2M_4\cdots$ be the product of the even $M_i$ factors, excluding $M_0$ and $M_m$. Then it is shown in \cite{TDM} that the product of $\mathcal{O}, \mathcal{E}$ and $M_0$ in any order gives a pentadiagonal Fiedler pencil of $P(\lam)$.

\begin{corollary}\label{pdfpc}
Let  $\mathbb{M}_{\sig_{1}}$ be the product of odd $\mathbb{M}_{i}$ factors and $\mathbb{M}_{\sig_{2}}$ be the product of even $\mathbb{M}_{i}$ factors, excluding $\mathbb{M}_0$ and $\mathbb{M}_m$. Then $\mathbb{L}_{\sig}(\lam) = \lam \mathbb{M}_{m} - \mathbb{M}_{\sig_{1}} \mathbb{M}_{0}\mathbb{M}_{\sig_{2}}$ and $\mathbb{L}_{\widetilde{\sig}}(\lam) = \lam \mathbb{M}_{m} - \mathbb{M}_{0}\mathbb{M}_{\sig_{2}}\mathbb{M}_{\sig_{1}} $ are block pentadiagonal pencils of $\mathcal{S}(\lam). $ However, the pencils $\mathbb{L}(\lam) = \lam \mathbb{M}_{m} - \mathbb{M}_{\sig_{2}}\mathbb{M}_{\sig_{1}} \mathbb{M}_{0}$ and $\mathbb{L}(\lam) = \lam \mathbb{M}_{m} -  \mathbb{M}_{0} \mathbb{M}_{\sig_{1}}\mathbb{M}_{\sig_{2}} $ are not block pentadiagonal.
\end{corollary}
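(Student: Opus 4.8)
The plan is to deduce the whole statement from Theorem~\ref{pdpc}, which reduces block pentadiagonality of a Fiedler pencil $\mathbb{L}_{\sig}(\lam)$ of $\mathcal{S}(\lam)$ to two conditions: that the underlying Fiedler pencil $L_{\sig}(\lam) = \lam M_m - M_{\sig}$ of $P(\lam)$ be block pentadiagonal, and that CISS$(\sig) = (c_1, i_1, \ldots, c_l, i_l)$ satisfy $c_1 \le 1$ and $i_1 \le 1$. First I would observe that each of the four words $\mathbb{M}_{\sig_1}\mathbb{M}_0\mathbb{M}_{\sig_2}$, $\mathbb{M}_0\mathbb{M}_{\sig_2}\mathbb{M}_{\sig_1}$, $\mathbb{M}_{\sig_2}\mathbb{M}_{\sig_1}\mathbb{M}_0$ and $\mathbb{M}_0\mathbb{M}_{\sig_1}\mathbb{M}_{\sig_2}$ contains every $\mathbb{M}_i$, $i = 0, \ldots, m-1$, exactly once, so each equals $\mathbb{M}_{\sig}$ for a suitable bijection $\sig$ and hence is a genuine Fiedler pencil of $\mathcal{S}(\lam)$; moreover, since the odd-indexed $\mathbb{M}_i$ commute among themselves and the even-indexed ones commute among themselves, the pencil does not depend on the internal orderings inside $\mathbb{M}_{\sig_1}$ and $\mathbb{M}_{\sig_2}$ (the associated bijections are equivalent). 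Finally, the corresponding Fiedler pencil of $P(\lam)$ is $\lam M_m - M_{\sig}$ with $M_{\sig}$ the same word in $\mathcal{O} = M_1 M_3 \cdots$, $\mathcal{E} = M_2 M_4 \cdots$ and $M_0$; by \cite{TDM} any ordering of $\mathcal{O}, \mathcal{E}, M_0$ yields a block pentadiagonal pencil of $P(\lam)$, so condition (i) of Theorem~\ref{pdpc} holds for all four words. Thus the only thing left to determine is the pair $(c_1, i_1)$ in each case.

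To read off $c_1$ and $i_1$ I would use the criterion recalled just after Definition~\ref{dci}: $\sig$ has a consecution (inversion) at $d$ if and only if $\mathbb{M}_d$ is to the left (right) of $\mathbb{M}_{d+1}$ in $\mathbb{M}_{\sig}$. Since the internal orderings are irrelevant, it suffices to locate the factors $\mathbb{M}_0$, $\mathbb{M}_1 \in \mathbb{M}_{\sig_1}$, $\mathbb{M}_2 \in \mathbb{M}_{\sig_2}$ and $\mathbb{M}_3 \in \mathbb{M}_{\sig_1}$, assuming $m$ large enough that these factors are distinct. For $\mathbb{M}_{\sig_1}\mathbb{M}_0\mathbb{M}_{\sig_2}$: $\mathbb{M}_1$ lies left of $\mathbb{M}_0$ and left of $\mathbb{M}_2$, giving an inversion at $0$ and a consecution at $1$, so $c_1 = 0$, $i_1 = 1$. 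For $\mathbb{M}_0\mathbb{M}_{\sig_2}\mathbb{M}_{\sig_1}$: $\mathbb{M}_0$ is left of $\mathbb{M}_1$ (consecution at $0$), $\mathbb{M}_1$ is right of $\mathbb{M}_2$ (inversion at $1$) and $\mathbb{M}_2$ is left of $\mathbb{M}_3$ (consecution at $2$), so $c_1 = 1$, $i_1 = 1$. In both cases $c_1 \le 1$ and $i_1 \le 1$, so Theorem~\ref{pdpc} shows that $\mathbb{L}_{\sig}(\lam)$ and $\mathbb{L}_{\widetilde{\sig}}(\lam)$ are block pentadiagonal Fiedler pencils of $\mathcal{S}(\lam)$ (and supplies their explicit border forms, with the $C$- and $B$-blocks sitting in block positions $m-1$ and $m$). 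For $\mathbb{M}_{\sig_2}\mathbb{M}_{\sig_1}\mathbb{M}_0$: $\mathbb{M}_1$ is left of $\mathbb{M}_0$ (inversion at $0$) and right of $\mathbb{M}_2$ (inversion at $1$), so CISS$(\sig)$ begins $(0, i_1, \ldots)$ with $i_1 \ge 2$. For $\mathbb{M}_0\mathbb{M}_{\sig_1}\mathbb{M}_{\sig_2}$: $\mathbb{M}_0$ is left of $\mathbb{M}_1$ and $\mathbb{M}_1$ is left of $\mathbb{M}_2$, so CISS$(\sig)$ begins $(c_1, \ldots)$ with $c_1 \ge 2$. Since $L_{\sig}(\lam)$ is block pentadiagonal in both of these cases too, Theorem~\ref{pdpc} forces these two pencils to fail block pentadiagonality, the obstruction being $i_1 > 1$ (respectively $c_1 > 1$), which pushes the $B$- or $C$-block more than one block row/column away from the trailing $(A - \lam E)$ block.

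The only genuinely delicate point is the bookkeeping in the second paragraph: one must ensure the factors $\mathbb{M}_0, \mathbb{M}_1, \mathbb{M}_2, \mathbb{M}_3$ are distinct (so either take $m \ge 4$, or dispose of the small cases $m \le 3$ directly, where the claim is either vacuous or immediate), and one must invoke the commutativity of same-parity Fiedler matrices to legitimately pass from a word in $\mathbb{M}_{\sig_1}, \mathbb{M}_{\sig_2}, \mathbb{M}_0$ to the relative positions of the individual factors. Everything else is a direct appeal to Theorem~\ref{pdpc} combined with the fact from \cite{TDM} that every product of $\mathcal{O}$, $\mathcal{E}$ and $M_0$ is block pentadiagonal.
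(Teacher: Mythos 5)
Your proposal is correct and follows essentially the same route as the paper: both reduce the question to Theorem~\ref{pdpc}/Theorem~\ref{fflptflr} together with the fact from \cite{TDM} that any ordering of $\mathcal{O}$, $\mathcal{E}$, $M_0$ gives a block pentadiagonal Fiedler pencil of $P(\lam)$, and then read off $(c_1,i_1)$ for each of the four words. Your explicit bookkeeping of the relative positions of $\mathbb{M}_0,\mathbb{M}_1,\mathbb{M}_2,\mathbb{M}_3$ (and the caveat about small $m$) is if anything slightly more careful than the paper, which simply states the CISS values (and writes the third case loosely as $(2,\ldots)$ where $c_1=0$, $i_1\geq 2$).
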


\begin{proof}
Consider the pencil $\mathbb{L}_{\sig}(\lam) = \lam \mathbb{M}_{m} - \mathbb{M}_{\sig_{1}} \mathbb{M}_{0}\mathbb{M}_{\sig_{2}}$. Then CISS$(\sig) = (c_1,i_1, \ldots) = (0, 1, \ldots)$. Hence by Theorem \ref{fflptflr}, we have
 $$\mathbb{L}_{\sig}(\lam) = \left[
                             \begin{array}{c|c}
                                L_{\sigma}(\lambda) & e_{m-i_1}\otimes C \\
                              \hline
                            e_{m}^{T} \otimes B & (A - \lambda E) \\
                              \end{array}
                                  \right] = \left[
                             \begin{array}{c|c}
                                L_{\sigma}(\lambda) & e_{m-1}\otimes C \\
                              \hline
                            e_{m}^{T} \otimes B & (A - \lambda E) \\
                              \end{array}
                                  \right],$$ where $L_{\sig}(\lam) = \lam M_m - M_{\sig_1} M_0 M_{\sig_2}$. It is well known that the product of $M_{\sig_1}, M_{\sig_2}$, $M_0$ in any order gives a block pentadiagonal matrix (\cite{TDM}, Example $3.2$). Hence $L_{\sig}(\lam) = \lam M_m - M_{\sig}$ is block pentadiagonal and consequently $\mathbb{L}_{\sig}(\lam)$ is block pentadiagonal. For the pencil $\mathbb{L}_{\widetilde{\sig}}(\lam)$ we have CISS$(\widetilde{\sig}) = (c_1,i_1, \ldots) = (1, 1, \ldots)$. Hence by Theorem \ref{fflptflr},
$\mathbb{L}_{\widetilde{\sig}(\lam)}$ is block pentadiagonal. For $\mathbb{L}(\lam) = \lam \mathbb{M}_{m} - \mathbb{M}_{\sig_{2}}\mathbb{M}_{\sig_{1}} \mathbb{M}_{0}$, we have CISS$(\sig) = (c_1,i_1, \ldots) = (2, \ldots)$. By Theorem \ref{fflptflr}, $\mathbb{L}(\lam)$ has more than two subdiagonal blocks and hence is not block pentadiagonal. Similar argument holds for the pencil $\mathbb{L}(\lam) = \lam \mathbb{M}_{m} -  \mathbb{M}_{0} \mathbb{M}_{\sig_{1}}\mathbb{M}_{\sig_{2}}. $
\end{proof}

\begin{exam}\label{eopdfp} \em
Let  $G(\lam) := A_{6}\lam^{6} + \cdots + \lam A_1 + A_{0} + C(\lam E - A)^{-1}B$. Also, let $\sig^{-1}_{1} = (1, 3, 5)$ and $\sig^{-1}_{2} = (2, 4). $ We have
\begin{align*}
\mathbb{M}_{\sig_1} = {\scriptsize \left[
  \begin{array}{cccccc|c}
    -A_{5} & I_{n} &  &  &  &  &  \\
    I_{n} & 0 &  &  &  &  &  \\
     &  & -A_{3} & I_{n} &  &  &  \\
     &  & I_{n} & 0 &  &  &  \\
     &  &  &  & -A_{1} & I_{n} &  \\
     &  &  &  & I_{n}& 0 &  \\
    \hline
     &  &  &  &  &  & I_r \\
  \end{array}
\right], \,\, \mathbb{M}_{\sig_2} =
\left[
  \begin{array}{cccccc|c}
    I_{n} &  &  &  &  &  &  \\
     & -A_{4} & I_{n} &  &  &  &  \\
     & I_{n} & 0 &  &  &  &  \\
     &  &  & -A_{2} & I_{n} &  &  \\
     &  &  & I_{n} & 0 &  &  \\
     &  &  &  & & I_n&  \\
    \hline
     &  &  &  &  &  & I_r \\
  \end{array}
\right]}.
\end{align*}
Case I: Consider the pencil $\mathbb{L}_{\sigma}(\lambda) :=  \lambda \mathbb{M}_{6} - \mathbb{M}_{\sig_1} \mathbb{M}_{0}\mathbb{M}_{\sig_2}.$ Then we have
$$\mathbb{M}_{0} \mathbb{M}_{\sig_2} =
\left[
  \begin{array}{cccccc|c}
    I_{n} &  &  &  &  &  &  \\
     & -A_{4} & I_{n} &  &  &  &  \\
     & I_{n} & 0 &  &  &  &  \\
     &  &  & -A_{2} & I_{n} &  &  \\
     &  &  & I_{n} & 0 &  &  \\
     &  &  &  & & -A_{0}& -C \\
    \hline
     &  &  &  &  & -B & -A \\
  \end{array}
\right]$$
and hence
$$\mathbb{M}_{\sigma_1} \mathbb{M}_0\mathbb{M}_{\sigma_2}= \left[
    \begin{array}{cccccc|c}
      -A_{5} & -A_{4} & I_{n} & 0 & 0 & 0 & 0 \\
      I_{n} & 0 & 0 & 0 & 0 & 0 & 0 \\
      0 & -A_{3} & 0 & -A_{2} & I_{n} & 0 & 0 \\
      0 & I_{n} & 0 & 0 & 0 & 0 & 0 \\
      0 & 0 & 0 & -A_{1} & 0 & -A_{0} & -C \\
      0 & 0 & 0 & I_{n} & 0 & 0 & 0 \\
       \hline
      0 & 0 & 0 & 0 & 0 & -B & -A \\
    \end{array}
  \right]
$$
from which it follows that $\mathbb{L}_{\sigma}(\lam)$ is block pentadiagonal. Note that the number of consecutions at $0$ in $\mathbb{M}_{\sigma}$ is zero and the number of inversion at $0$ in $\mathbb{M}_{\sigma}$ is $1$.

Case II: Consider the pencil $\mathbb{L}_{\sig}(\lambda) :=  \lambda \mathbb{M}_{6} - \mathbb{M}_{0}\mathbb{M}_{\sig_{2}}\mathbb{M}_{\sig_{1}}. $ Then
$$\mathbb{M}_{0}\mathbb{M}_{\sig_{2}}\mathbb{M}_{\sig_{1}} = \left[
    \begin{array}{cccccc|c}
      -A_{5} & I_{n} & 0 & 0 & 0 & 0 & 0 \\
      -A_{4} & 0 & -A_{3} & I_{n} & 0 & 0 & 0 \\
      I_{n} & 0 & 0 & 0 & 0 & 0 & 0 \\
      0 & 0 & -A_{2} & 0 & -A_{1} & I_{n} & 0 \\
      0 & 0 & I_{n} & 0 & 0 & 0 & 0 \\
      0 & 0 & 0 & 0 & -A_{0} & 0 & -C \\
       \hline
      0 & 0 & 0 & 0 & -B & 0 & -A \\
    \end{array}
  \right]
$$
is block pentadiagonal. Thus $\mathbb{L}_{\sig}(\lam)$ is a block pentadiagonal pencil. Note that the number of consecutions at $0$ in $\mathbb{M}_{\sigma}$ is $1$, i.e., $c_1=1. $  \\

Case III: Consider the pencil $\mathbb{L}_{\sig}(\lambda) :=  \lambda \mathbb{M}_{6} - \mathbb{M}_{0}\mathbb{M}_{\sig_{1}}\mathbb{M}_{\sig_{2}}$.
Then  $\mathbb{L}_{\sig}(\lambda)$  is not pentadiagonal, since the number of consecutions at $0$ in $\mathbb{M}_{\sig}$ is $2$. Indeed, %is equal to
%$${\scriptsize \left[
%  \begin{array}{cc|c}
%    I_{5n} &  &  \\
%     & -A_{0} & -C \\
%    \hline
%     & -B & -A \\
%  \end{array}
%\right] \left[
%  \begin{array}{cccccc|c}
%    -A_{5} & I_{n} &  &  &  &  &  \\
%    I_{n} & 0 &  &  &  &  &  \\
%     &  & -A_{3} & I_{n} &  &  &  \\
%     &  & I_{n} & 0 &  &  &  \\
%     &  &  &  & -A_{1} & I_{n} &  \\
%     &  &  &  & I_{n}& 0 &  \\
%    \hline
%     &  &  &  &  &  & I_r \\
%  \end{array}
%\right] \left[
%  \begin{array}{cccccc|c}
%    I_{n} &  &  &  &  &  &  \\
%     & -A_{4} & I_{n} &  &  &  &  \\
%     & I_{n} & 0 &  &  &  &  \\
%     &  &  & -A_{2} & I_{n} &  &  \\
%     &  &  & I_{n} & 0 &  &  \\
%     &  &  &  & & I_n &  \\
%    \hline
%     &  &  &  &  &  & I_r \\
%  \end{array}
%\right]} $$
%$$ = \left[
%  \begin{array}{cc|c}
%    I_{5n} &  &  \\
%     & -A_{0} & -C \\
%    \hline
%     & -B & -A \\
%  \end{array}
%\right]  \left[
%  \begin{array}{cccccc|c}
%    -A_{5} & -A_4 & I_n &  &  &  &  \\
%    I_{n} & 0 & 0 &  &  &  &  \\
%     & -A_{3} & 0 & -A_2 & I_{n} & 0 &  \\
%     & I_{n} & 0 & 0 & 0 & 0 &  \\
%     &  &  & -A_{1} & 0 & I_{n} &  \\
%     &  &  & I_{n} & 0 & 0 &  \\
%    \hline
%     &  &  &  &  &  & I_r  \\
%  \end{array}
%\right] $$
$$ \mathbb{M}_{0}\mathbb{M}_{\sig_{1}}\mathbb{M}_{\sig_{2}}=  \left[
  \begin{array}{cccccc|c}
    -A_{5} & -A_4 & I_n & 0 & 0 & 0 & 0 \\
    I_{n} & 0 & 0 & 0 & 0 & 0 & 0 \\
    0 & -A_{3} & 0 & -A_2 & I_{n} & 0 & 0 \\
    0 & I_{n} & 0 & 0 & 0 & 0 & 0 \\
    0 & 0 & 0 & -A_{1} & 0 & I_{n} & 0 \\
    0 & 0 & 0 & -A_0 & 0 & 0 & -C \\
    \hline
    0 & 0 & 0 & -B & 0 & 0 & -A  \\
  \end{array}
\right],
$$
which is not block pentadiagonal. Note however that $M_{\sig} := M_0M_{\sigma_1}M_{\sigma_2},$ which is the $(1, 1)$ block matrix in $\mathbb{M}_\sigma,$  is pentadiagonal. Similarly, $\mathbb{M}_{\sig} = \mathbb{M}_{\sig_{2}}\mathbb{M}_{\sig_{1}}\mathbb{M}_{0}$ is also not block pentadiagonal, since the number of inversions at $0$ in $\mathbb{M}_{\sig}$ is $2$. $\blacksquare$
\end{exam}

As in the case of Fiedler pencils of matrix polynomials, Fiedler pencils of Rosenbrock system polynomials can be constructed algorithmically.

\begin{theorem}
Let $\sigma : \{0, 1, \ldots, m-1\} \rightarrow \{1, 2, \ldots, m \}$ be a bijection. Let $\mathbb{L}_{\sigma}(\lam) = \lam \mathbb{M}_{m}- \mathbb{M}_{\sigma}$ be the Fiedler pencil of system matrix $\mathcal{S}(\lam)$ associated with $\sigma$. Consider the matrices $W_0, W_1, \ldots, W_{m-2}$ constructed by Algorithm~\ref{alg1}. Then we have $\mathbb{M}_{\sigma} = W_{m-2}$.

\begin{algorithm}[H]
\caption{Construction of $\mathbb{M}_{\sigma}$ for $\mathbb{L}_{\sigma}(\lam) := \lam \mathbb{M}_{m}- \mathbb{M}_{\sigma}$.}
\label{alg1}

\textbf{Input}: $\mathcal{S}(\lam) = \left[
                              \begin{array}{c|c}
                                \sum \limits_{i=0}^{m}\lam^{i}A_{i} & C \\
                                \hline
                                B & A -\lam E \\
                              \end{array}
                            \right]$ and a bijection $\sigma :\{0, 1, \ldots, m-1\} \rar \{1, 2, \ldots, m\}$. \\
\textbf{Output}: { $\mathbb{M}_{\sigma}$ }
\begin{algorithmic}

\If{$\sigma$ has a consecution at $0$}
    \State $W_0 := \left[
                    \begin{array}{cc|c}
                      -A_{1} & I_n & 0 \\
                      -A_0 & 0 & -C \\
                      \hline
                      -B & 0 & -A \\
                    \end{array}
                  \right]$

\Else
    \State $W_0 := \left[
                    \begin{array}{cc|c}
                      -A_{1} & -A_0 & - C \\
                       I_n & 0 & 0 \\
                      \hline
                      0 & -B & -A \\
                    \end{array}
                  \right]
    $
\EndIf

   \For{$i = 1:m-2$}
 % \State {\bf for} $i = 1:m-2$

\If{$\sigma$ has a consecution at $i$}
    \State $W_i :=  \left[
       \begin{array}{cccc}
         -A_{i+1} & I_{n} & 0 & 0  \\
         W_{i-1}(:,1) & 0 & W_{i-1}(:,2:i+1) & W_{i-1}(:,i+2)   \\
       \end{array}
     \right]$

\Else
    \State $W_i := \left[
           \begin{array}{cc}
             -A_{i+1} & W_{i-1}(1,:)   \\
             I_{n} & 0    \\
             0 & W_{i-1}(2:i+1, :)  \\
             0 & W_{i-1}(i+2,:) \\
           \end{array}
         \right]
   $

\EndIf

\EndFor
%{\bf endfor}
\State $\mathbb{M}_{\sigma} := W_{m-2}$

\end{algorithmic}
\end{algorithm}

\end{theorem}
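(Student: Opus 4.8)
The plan is an induction on the length of the truncated product of Fiedler matrices. For a bijection $\sigma$ and $1\le k\le m-1$, let $\mathbb{M}_{\sigma,k}$ denote the product of $\mathbb{M}_0,\mathbb{M}_1,\ldots,\mathbb{M}_k$ taken in the relative order in which these factors occur inside $\mathbb{M}_\sigma$. This is well defined, since any two of these factors whose indices differ by more than one commute by (\ref{crr}); the exceptional pair $\mathbb{M}_0,\mathbb{M}_m$ is irrelevant here because $\mathbb{M}_m$ is never a factor of $\mathbb{M}_\sigma$. Hence $\mathbb{M}_{\sigma,k}$ depends only on the consecution--inversion pattern of $\sigma$ on $\{0,1,\ldots,k\}$, and $\mathbb{M}_{\sigma,m-1}=\mathbb{M}_\sigma$. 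I would prove the invariant
$$ \mathbb{M}_{\sigma,k}=\left[\begin{array}{c|c} I_{(m-k-1)n} & 0 \\ \hline 0 & W_{k-1}\end{array}\right],\qquad k=1,\ldots,m-1, $$
where $W_{k-1}$ is the $((k+1)n+r)\times((k+1)n+r)$ matrix produced by Algorithm~\ref{alg1}; taking $k=m-1$ then gives $\mathbb{M}_\sigma=W_{m-2}$, as asserted. (I assume $m\ge 2$; for $m=1$ there is no index at which $\sigma$ could have a consecution and the statement is vacuous.)

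For the base case $k=1$ I would multiply out $\mathbb{M}_0\mathbb{M}_1$ and $\mathbb{M}_1\mathbb{M}_0$ directly from the definitions (\ref{0mmfr}) and (\ref{imfr}), using that $e_m\otimes C$ and $e_m^T\otimes B$ are supported on the last block. One obtains $\mathbb{M}_0\mathbb{M}_1=\diag(I_{(m-2)n},W_0)$ with $W_0$ the matrix of the \textsf{If} (consecution) branch, and $\mathbb{M}_1\mathbb{M}_0=\diag(I_{(m-2)n},W_0)$ with $W_0$ the matrix of the \textsf{Else} (inversion) branch; since $\sigma$ has a consecution at $0$ exactly when $\mathbb{M}_0$ precedes $\mathbb{M}_1$ in $\mathbb{M}_\sigma$, this is precisely the assignment made by the algorithm.

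For the inductive step, assume the invariant for $k=i$. I first reduce combinatorially: $\mathbb{M}_{\sigma,i+1}$ is obtained from $\mathbb{M}_{\sigma,i}$ by inserting the factor $\mathbb{M}_{i+1}$, and $\mathbb{M}_{i+1}$ commutes with each of $\mathbb{M}_0,\ldots,\mathbb{M}_{i-1}$ (indices differing by at least two). If $\sigma$ has a consecution at $i$, then $\mathbb{M}_i$ lies to the left of $\mathbb{M}_{i+1}$ in $\mathbb{M}_\sigma$, so every factor to the right of $\mathbb{M}_{i+1}$ in $\mathbb{M}_{\sigma,i+1}$ is one of $\mathbb{M}_0,\ldots,\mathbb{M}_{i-1}$; sliding $\mathbb{M}_{i+1}$ past all of them yields $\mathbb{M}_{\sigma,i+1}=\mathbb{M}_{\sigma,i}\,\mathbb{M}_{i+1}$. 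Symmetrically, if $\sigma$ has an inversion at $i$, then $\mathbb{M}_{\sigma,i+1}=\mathbb{M}_{i+1}\,\mathbb{M}_{\sigma,i}$. I then do the block arithmetic: substitute $\mathbb{M}_{\sigma,i}=\diag(I_{(m-i-1)n},W_{i-1})$, partition $W_{i-1}$ into block rows and block columns of sizes $n$, $in$, $r$ — precisely the split matching $W_{i-1}(:,1)$, $W_{i-1}(:,2{:}i{+}1)$, $W_{i-1}(:,i{+}2)$ (and its row analogue) used inside the loop — and multiply by $\mathbb{M}_{i+1}=\diag\left(I_{(m-i-2)n},\, \left[\begin{smallmatrix}-A_{i+1}&I_n\\ I_n&0\end{smallmatrix}\right],\, I_{in},\, I_r\right)$. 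A short computation then shows that $\mathbb{M}_{\sigma,i}\mathbb{M}_{i+1}$ is block diagonal with blocks $I_{(m-i-2)n}$ and exactly the $W_i$ of the \textsf{If} branch, while $\mathbb{M}_{i+1}\mathbb{M}_{\sigma,i}$ is block diagonal with blocks $I_{(m-i-2)n}$ and exactly the $W_i$ of the \textsf{Else} branch, closing the induction.

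The step I expect to be the main obstacle is making the combinatorial reduction airtight: one must verify that after the commutations the remaining $i+1$ factors are precisely $\mathbb{M}_0,\ldots,\mathbb{M}_i$ in the same relative order as in $\mathbb{M}_\sigma$ (so their product really is $\mathbb{M}_{\sigma,i}$), and that $\mathbb{M}_{i+1}$ may legitimately be moved past every intervening factor — here it matters that $\mathbb{M}_{i+1}$ with $i\ge1$ has index at least $2$, so it commutes in particular with $\mathbb{M}_0$, and that $\mathbb{M}_m$ never occurs in $\mathbb{M}_\sigma$. Once this reduction is in place, the block multiplications are routine, requiring only careful bookkeeping of the sizes $(m-i-2)n,\,n,\,n,\,in,\,r$; the two branches at each stage (consecution versus inversion) are handled by the same argument with left and right multiplication interchanged.
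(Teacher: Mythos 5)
Your proposal is correct; the block computations you outline do go through, and the combinatorial step you flag as the main obstacle is handled exactly as you describe (every factor separating $\mathbb{M}_{i+1}$ from the end of the truncated product has index at most $i-1$, hence commutes with $\mathbb{M}_{i+1}$ by (\ref{crr}), and the exceptional pair $(\mathbb{M}_0,\mathbb{M}_m)$ never occurs since $\mathbb{M}_m$ is not a factor of $\mathbb{M}_{\sigma}$). Your route differs from the paper's in how the induction is organized. The paper inducts on the degree $m$: it isolates the highest factor $\mathbb{M}_{m-1}$ according to whether $\sigma$ has a consecution or an inversion at $m-2$, identifies the remaining product with $\diag(I_n,\widetilde{M}_{\tau})$ where $\widetilde{M}_{\tau}$ is the Fiedler product of the truncated system polynomial $\widetilde{S}(\lam)$ of degree $m-1$ (their relation $\mathbb{M}_i=\diag(I_n,\widetilde{M}_i)$, equation (\ref{mialg})), applies the induction hypothesis to $\widetilde{S}$ and a bijection $\tau$ whose consecutions and inversions at $0,\ldots,m-3$ agree with those of $\sigma$, and finishes with one block multiplication reproducing the last loop iteration. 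You instead keep the ambient size $mn+r$ fixed and run a forward induction on the truncated products $\mathbb{M}_{\sigma,k}$, with the invariant $\mathbb{M}_{\sigma,k}=\diag(I_{(m-k-1)n},W_{k-1})$ tracking the algorithm's iterations one-for-one; each step uses the same two ingredients (commutativity to obtain $\mathbb{M}_{\sigma,i+1}=\mathbb{M}_{\sigma,i}\mathbb{M}_{i+1}$ or $\mathbb{M}_{i+1}\mathbb{M}_{\sigma,i}$, then one block multiplication matching the corresponding branch of the loop body). The computational core is therefore the same; what your bottom-up formulation buys is that it avoids introducing the lower-degree system polynomial and the auxiliary bijection $\tau$ altogether and mirrors the algorithm directly, at the modest cost of carrying the identity padding $I_{(m-k-1)n}$ explicitly through the induction, whereas the paper's degree reduction makes each inductive instance literally an instance of the theorem for a smaller system.
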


\begin{proof}
We prove the result by induction on the degree $m$. For $m = 2$ the proof is obvious, since for $m=2$ there are only three Fiedler factors and two possibility for $\mathbb{M}_{\sig}$, either $\mathbb{M}_{\sig} = \mathbb{M}_{0}\mathbb{M}_{1}$ if $\sig$ has a consecution at $0$ or $\mathbb{M}_{\sigma} = \mathbb{M}_{1}\mathbb{M}_{0}$ if $\sigma$ has an inversion at $0$. Now
$$\mathbb{M}_{0}\mathbb{M}_{1} = \left[
                    \begin{array}{cc|c}
                      -A_{1} & I_n & 0 \\
                      -A_0 & 0 & -C \\
                      \hline
                      -B & 0 & -A \\
                    \end{array}
                  \right] \text{  and  } \mathbb{M}_{1}\mathbb{M}_{0} = \left[
                    \begin{array}{cc|c}
                      -A_{1} & -A_0 & - C \\
                       I_n & 0 & 0 \\
                      \hline
                      0 & -B & -A \\
                    \end{array}
                  \right].$$ This proves the case for $m= 2. $
Suppose that the result is true for $m-1 \geq 2$. We show that it is true for $m$ and the bijection $\sig : \{0, 1, \ldots, m-1\} \rightarrow \{1, 2, \ldots, m\}. $ Note that the Fiedler matrices $\mathbb{M}_{i}$ associated with $\mathcal{S}(\lam) = \left[
                                                                                                                \begin{array}{c|c}
                                                                                                                  \sum\limits_{j = 0}^{m}\lam^{j}A_{j} & C \\
                                                                                                                  \hline
                                                                                                                  B & A - \lam E \\
                                                                                                                \end{array}
                                                                                                              \right]
$ satisfies
\begin{equation}
\mathbb{M}_{i} = \diag(I_{n}, \widetilde{M}_{i}), \text{  for  } i= 0:m-2,  \label{mialg}
\end{equation}
where $\widetilde{M}_{i}$ are the $(n(m-1)+r) \times (n(m-1)+r)$ Fiedler matrices associated with
$\widetilde{S}(\lam) = \left[
                         \begin{array}{c|c}
                           \sum\limits_{j = 0}^{m-1}\lam^{j}A_{j} & C \\
                           \hline
                           B & A - \lam E \\
                         \end{array}
                       \right]. $ \\

Case I. If $\sigma$ has a consecution at $m-2$, then using the commutativity relations of the matrices $\mathbb{M}_{i}$ we can write
$$\mathbb{M}_{\sig} = \mathbb{M}_{i_{0}}\cdots \mathbb{M}_{i_{m-2}} \mathbb{M}_{m-1}, $$ where $(i_{0}, i_{1}, \ldots, i_{m-2})$ is a permutation of
$(0, 1, \ldots, m-2). $ By (\ref{mialg}) we can write
\begin{equation}
\mathbb{M}_{\sigma} = \diag(I_{n}, \widetilde{M}_{\tau})\mathbb{M}_{m-1}, \label{indu}
\end{equation}
where $\tau : \{0, 1, \ldots, m-2\} \rightarrow \{1, \ldots, m-1\}$ is a bijection such that for $i= 0:m-3$, $\tau$ has a consecution (resp., inversion) at $i$ if and only if $\sigma$ has a consecution (resp., inversion) at $i$. So by the induction hypothesis, $\widetilde{M}_{\tau} = W_{m-3}$. Finally the product given in (\ref{indu}) is
$$
 {\small \mathbb{M}_{\sig}  = \left[
    \begin{array}{cccc}
      I_{n} & 0 & 0  & 0 \\
      0 & W_{m-3}(:, 1) & W_{m-3}(:, 2:m-1) & W_{m-3}(:, m) \\
    \end{array}
  \right]
 \left[
    \begin{array}{ccc|c}
      -A_{m-1} & I_{n} &  &  \\
      I_{n} & 0 &  &  \\
       &  & I_{(m-2)n} &  \\
      \hline
       &  &  & I_{r} \\
    \end{array}
  \right] } $$
$$ =  \left[
    \begin{array}{cccc}
      -A_{m-1} & I_{n} & 0 & 0 \\
      W_{m-3}(:, 1) & 0 & W_{m-3}(:, 2:m-1) & W_{m-3}(:, m)\\
    \end{array}
  \right], $$
which is equal to $W_{m-2}$ constructed when $\sig$ has a consecution at $m-2.$

Case II. If $\sig$ has an inversion at $m-2$ the proof is similar, but
$$\mathbb{M}_{\sig} = \mathbb{M}_{m-1}\mathbb{M}_{i_{0}}\cdots \mathbb{M}_{i_{m-2}}  = \mathbb{M}_{m-1}\diag(I_{n}, \widetilde{M}_{\tau}),$$
since $\mathbb{M}_{m-1}$ lies to the left of $\mathbb{M}_{m-2}. $
\end{proof}

\section{Fielder pencils are linearizations for system polynomials}
We now show that a Fiedler pencil  $\mathbb{L}_{\sig}(\lam)$ of the Rosenbrock system polynomial $ \mathcal{S}(\lam)$ is a {\em linearization} of  $\mathcal{S}(\lambda)$ in the sense of {\em system equivalence.} An $n$-by-$n$ matrix polynomial $X(\lam)$ is said to be unimodular if $\det(X(\lam))$ is a nonzero constant for all $\lam \in \C.$  Let $G_1(\lam)$ and $G_2(\lam)$ be  $n$-by-$n$ rational matrix functions. Then $ G_1(\lam)$ is said to be unimodularly equivalent to $G_2(\lam)$ (denoted as $G(\lam) \thicksim G_2(\lam)$)  if there are $n$-by-$n$ unimodular matrix polynomials $U(\lam)$ and $V(\lam)$ such that  $ U(\lam) G_1(\lam) V(\lam) = G_2(\lam)$ for all $ \lam$, see~\cite{rosenbrock70}.

\begin{definition}[System polynomial]
An $(n+r)\times (n+r)$ matrix polynomial $ \mathcal{X}(\lam)$ is said to be a  system polynomial of degree $m$ if it is of the form $$ \mathcal{X}(\lam) = \left[\begin{array}{c|c} X_{11}(\lam) &  X_{12}\\ \hline  X_{21} &  X_{22}(\lam) \end{array}\right],$$  where $X_{11}(\lam)  $ is an $n$-by-$n$ matrix polynomial of degree $m$, $X_{12}$ and $ X_{21}$ are constant matrices and $ X_{22}(\lam)$ is an $r$-by-$r$ matrix pencil. If, in addition, the pencil $X_{22}(\lam)$ is regular then $\mathcal{X}(\lam)$ is said to be a Rosenbrock system polynomial and, in such a case, the rational matrix function $$ T(\lam) := X_{11}(\lam) - X_{12} \left(X_{22}(\lam)\right)^{-1} X_{21}$$ is said to be the transfer function associated with $\mathcal{X}(\lam).$
\end{definition}

A system polynomial represents a linear time-invariant system in state-space form and is also referred to as a system matrix. Observe that if $ \mathcal{X}(\lam) $ is an $(n+r)\times (n+r)$ system polynomial then for any $n$-by-$n$ unimodular matrix polynomials $U(\lam)$ and $V(\lam),$  the matrix polynomial
$$ \mathcal{Z}(\lam) := \left[\begin{array}{c|c} U(\lam) &  0 \\  \hline 0 & I_r \end{array} \right] \mathcal{X}(\lam)  \left[ \begin{array}{c|c} V(\lam) & 0 \\ \hline  0&  I_r \end{array}\right]$$ need not be a system polynomial. We now introduce the notion of a system equivalence which is different from ``strict system equivalence" introduced by Rosenbrock~\cite{rosenbrock70}.

\begin{definition}[System equivalence] Let $ \mathcal{X}(\lam)$ and $\mathcal{Y}(\lam)$ be $(n+r)\times (n+r)$ system polynomials.  Then $ \mathcal{X}(\lam)$ is said to be system equivalent to  $\mathcal{Y}(\lam)$ (denoted as $ \mathcal{X}(\lam) \thicksim_{se} \mathcal{Y}(\lam)$) if there are $n$-by-$n$ unimodular matrix polynomials $ U(\lam)$ and $V(\lam)$ such that  for all   $\lam \in \C, $ we have $$ \left[\begin{array}{c|c} U(\lam) &  0 \\  \hline 0 & I_r \end{array} \right] \mathcal{X}(\lam)  \left[ \begin{array}{c|c} V(\lam) & 0 \\ \hline  0&  I_r \end{array}\right] = \mathcal{Y} (\lam).$$
\end{definition}

Suppose that $ \mathcal{X}(\lam)$ and  $\mathcal{Y}(\lam)$ are $(n+r)\times (n+r)$ system polynomials and that $ \mathcal{X}(\lam) \thicksim_{se} \mathcal{Y}(\lam).$
Then it follows that $X_{22}(\lam) = Y_{22}(\lam)$ for $\lam \in \C,$ where $X_{22}(\lam)$ and $Y_{22}(\lam)$ denote the $(2, 2)$ blocks of $\mathcal{X}(\lam)$ and $\mathcal{Y}(\lam),$  respectively.  Consequently, if $\mathcal{X}(\lam)$ is a Rosenbrock system polynomial then so is $\mathcal{Y}(\lam).$ Let $ T_{\mathcal{X}}(\lam)$ and $T_{\mathcal{Y}}(\lam),$ respectively, denote the transfer functions associated with $\mathcal{X}(\lam)$ and $\mathcal{Y}(\lam).$ Then it is easy to see that the transfer functions $T_{\mathcal{X}}(\lam)$ and $T_{\mathcal{Y}}(\lam)$ are unimodularly equivalent. Consequently, system equivalence preserves (finite) transmission zeros (and their partial multiplicities) of a linear time-invariant system in state-space form.

If $\mathcal{X}(\lam)$ is an $(n+r)\times (n+r)$ system polynomial then for any positive integer $p$ the $(n+p+r)\times (n+p+r)$ system polynomial given by   $$  I_p \oplus \mathcal{X}(\lam) :=\left[ \begin{array}{cc} I_p & 0 \\  0 & \mathcal{X}(\lam)\end{array}\right]  $$ is called an extended system polynomial~\cite{vardulakis}. This amounts to introducing $p$ additional zero output variables to the linear time-invariant system associated with $\mathcal{X}(\lam).$  We now introduce the notion of a system linearization for system polynomials which we refer to as {\em Rosenbrock linearization}.

\begin{definition}[Rosenbrock linearization] Let $ \mathcal{X}(\lam)$ be an $ (n+r)\times (n+r)$ system polynomial of degree $m >1.$ Then
an $(nm +r) \times (nm+r)$ system pencil $\mathbb{L}(\lambda)$  (system polynomial of degree $1$)  is said to be a Rosenbrock linearization of $\mathcal{X}(\lam)$ if $\mathbb{L}(\lambda)  \thicksim_{se} I_{(m-1)n}\oplus \mathcal{X}(\lam).$ In other words,  $\mathbb{L}(\lambda)$ is of the form $\mathbb{L}(\lambda) = \left[
                                                     \begin{array}{c|c}
                                                       L(\lam) & X \\
                                                       \hline
                                                       Y & A-\lam E \\
                                                     \end{array}
                                                   \right]$  and there are  $nm \times nm$ unimodular matrix polynomials $U(\lam)$ and $V(\lam)$ such that $$\left[
         \begin{array}{c|c}
           U(\lam) & 0 \\
           \hline
           0 & I_r \\
         \end{array}
       \right] \mathbb{L}(\lam) \left[
                                    \begin{array}{c|c}
                                     V(\lam) & 0 \\
                                     \hline
                                     0 & I_r \\
                                \end{array}
                               \right] = \left[
                                           \begin{array}{c|c}
                                             I_{(m-1)n} & 0 \\
                                             \hline
                                             0 &  \mathcal{X}(\lam) \\
                                           \end{array}
                                         \right],$$
                                         where $L(\lam)$ is an $nm \times nm$ matrix pencil, $A- \lam E$ is an $r$-by-$r$ matrix pencil and, $X$ and $Y$ are constant matrices. If, in addition, $U(\lam)$ and $V(\lam)$ are constant matrices then $\mathbb{L}(\lam)$ is said to be a strict Rosenbrock linearization of $\mathcal{X}(\lam)$.
%An S-linearization  of a Rosenbrock system polynomial $ \mathcal{S}(\lam)$ is said to be  a {\em Rosenbrock linearization} of $ \mathcal{S}(\lam).$
\end{definition}

Observe that $\mathbb{L}(\lambda)$ is a {\em ``trimmed structured linearization"} of $\mathcal{X}(\lam).$ Indeed, $\mathbb{L}(\lambda)$ is a structured linearization  in the sense that $\mathbb{L}(\lambda)$ is itself a system matrix and hence is associated with a linear time-invariant system in SSF. Further,  $\mathbb{L}(\lambda)$ is a trimmed linearization of $\mathcal{X}(\lam)$ in the sense that $\mathbb{L}(\lambda)$ is an $(mn+r)\times (mn+r)$ pencil whereas a usual linearization of $\mathcal{X}(\lam)$ would result in a pencil of dimension $m(n+r) \times m(n+r).$ Furthermore, a byproduct of Rosenbrock linearization of $\mathcal{X}(\lam)$ is that it deflates some (all) of the infinite eigenvalues of $\mathcal{X}(\lam)$ when $\deg (\mathcal{X}(\lam)) \geq 2.$  Indeed,  suppose that $\mathcal{X}(\lam)$ is regular and that $\deg (\mathcal{X}(\lam)) \geq 2.$ Then obviously $\mathcal{X}(\lam)$ has eigenvalues at $\infty.$ On the other hand,  $\mathbb{L}(\lam)$ has an infinite eigenvalue if and only if at least one of the pencils  $A-\lam E$ and  $L(\lam)$ has an infinite eigenvalue. Consequently, $\mathbb{L}(\lam)$ does not have an infinite eigenvalue when the leading coefficients of the matrix polynomial  $X(\lam)$ and $ E$ are nonsingular even though the system polynomial $\mathcal{X}(\lam)$ has eigenvalues at $\infty.$ Thus  $\mathbb{L}(\lam)$ deflates all infinite eigenvalues of $\mathcal{X}(\lam)$ when $\deg(\mathcal{X}(\lam)) \geq 2$ and the leading coefficient of $X(\lam)$ is nonsingular.

Let $\mathcal{X}(\lam)$ be an $(n+r)\times (n+r)$ system polynomial of degree $m.$ Suppose that $\mathbb{L}(\lambda)$  is  a Rosenbrock linearization of $\mathcal{X}(\lam).$ Then  $\mathcal{X}(\lam)$ and $\mathbb{L}(\lambda)$ are of the form
\be \label{sysmat}  \mathcal{X}(\lam) = \left[\begin{array}{c|c} X(\lam) &  X_{12}\\ \hline  X_{21} &  Y-\lam Z \end{array}\right] \,\, \mbox{ and } \,\,
\mathbb{L}(\lambda) = \left[
                                                     \begin{array}{c|c}
                                                       L(\lam) & L_{12} \\
                                                       \hline
                                                       L_{21} & Y-\lam Z \\
                                                     \end{array}
                                                   \right], \ee  where $X(\lam)$ is an $n$-by-$n$ matrix polynomial of degree $m,$  $L(\lam)$ is an $mn\times mn$ matrix pencil, $Y- \lam Z$ is an $r$-by-$r$ matrix pencil and $ X_{12}, X_{21}, L_{12}, L_{21}$ are matrices of appropriate dimensions. It is easy to see that
$L(\lam)$  is unimodularly equivalent to $I_{(m-1)n}\oplus X(\lam),$ that is, $L(\lam)$ is a linearization~\cite{gohberg82,mackey2006vs} of the matrix polynomial $X(\lam). $ Thus $L(\lam)$ is a linearization of $X(\lam)$ when  $ \mathbb{L}(\lam)$ is a Rosenbrock linearization of $\mathcal{X}(\lam).$

On the other hand, suppose that  $L(\lam)$ is a linearization of $ X(\lam).$ Let $U(\lam)$ and $V(\lam)$ be $mn\times mn$ unimodular matrix polynomials such that $ U(\lam) L(\lam) V(\lam) = I_{(m-1)n}\oplus X(\lam)$ for all $ \lam \in \C.$ Then we have
 \be \label{slin}\left[
         \begin{array}{c|c}
           U(\lam) & 0 \\
           \hline
           0 & I_r \\
         \end{array}
       \right] \mathbb{L}(\lam) \left[
                                    \begin{array}{c|c}
                                     V(\lam) & 0 \\
                                     \hline
                                     0 & I_r \\
                                \end{array}
                               \right] = \left[
                                           \begin{array}{c|c}
                                             I_{(m-1)n} & 0 \\
                                             \hline
                                             0 &  \mathcal{X}(\lam) \\
                                           \end{array}
                                         \right], \ee where
$\mathbb{L}(\lam) := \left[\begin{array}{c|c}   L(\lam) &  U(\lam)^{-1} (e_m \otimes X_{12}) \\ \hline
(e_m^T \otimes X_{21}) V(\lam)^{-1} & Y- \lam Z \end{array}\right]. $ Since a system polynomial is in general not invariant under system equivalence, it follows that   $\mathbb{L}(\lam)$  is a Rosenbrock linearization of $\mathcal{X}(\lam)$ if and only if $U(\lam)^{-1} (e_m \otimes X_{12})$  and $ (e_m^T \otimes X_{21}) V(\lam)^{-1}$ are constant matrices independent of $\lam.$

\begin{corollary} Let $\mathcal{X}(\lam)$ and $\mathbb{L}(\lam)$ be as in (\ref{sysmat}). If $\mathbb{L}(\lam)$ is a Rosenbrock linearization of $\mathcal{X}(\lam)$ then $ L(\lam)$ is a linearization of $X(\lam).$  Conversely, if $ L(\lam)$ is a linearization of $X(\lam)$ then $\mathbb{L}(\lam)$ is a Rosenbrock linearization of $\mathcal{X}(\lam)$ if and only if $U(\lam)^{-1} (e_m \otimes X_{12}) $ and $ (e_m^T \otimes X_{21}) V(\lam)^{-1} $ are constant matrices (independent of $\lam$) and are equal to $L_{12}$ and $L_{21},$ respectively.
\end{corollary}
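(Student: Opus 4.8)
The plan is to derive this as an immediate consequence of the block-by-block analysis carried out just before the statement; no new idea is needed, only a careful matching of block entries across a change of block partition. Recall from~(\ref{sysmat}) that $\mathcal{X}(\lam)$ and $\mathbb{L}(\lam)$ share the common $(2,2)$ block $Y-\lam Z,$ that $\mathbb{L}(\lam)$ has \emph{constant} off-diagonal blocks $L_{12}$ and $L_{21},$ and that, by definition, $\mathbb{L}(\lam)$ is a Rosenbrock linearization of $\mathcal{X}(\lam)$ exactly when the identity~(\ref{slin}) holds for some $mn\times mn$ unimodular matrix polynomials $U(\lam)$ and $V(\lam).$ The one technical point to watch is that the left-hand side of~(\ref{slin}) is partitioned as $mn\mid r$ whereas its right-hand side $I_{(m-1)n}\oplus\mathcal{X}(\lam)$ is naturally partitioned as $(m-1)n\mid(n+r);$ regrouping the latter into the $mn\mid r$ partition rewrites it as the $2\times 2$ block matrix with diagonal blocks $I_{(m-1)n}\oplus X(\lam)$ and $Y-\lam Z$ and off-diagonal blocks $e_m\otimes X_{12}$ and $e_m^T\otimes X_{21}.$

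For the forward implication I would start from the assumption that $\mathbb{L}(\lam)$ is a Rosenbrock linearization of $\mathcal{X}(\lam),$ fix unimodular $U(\lam),V(\lam)$ witnessing~(\ref{slin}), and expand the left-hand side in the $mn\mid r$ partition; its four blocks are $U(\lam)L(\lam)V(\lam),$ $U(\lam)L_{12},$ $L_{21}V(\lam)$ and $Y-\lam Z.$ Comparing the $(1,1)$ block against the regrouped right-hand side gives $U(\lam)L(\lam)V(\lam)=I_{(m-1)n}\oplus X(\lam)$ for all $\lam,$ which is precisely the statement that $L(\lam)$ is a linearization of $X(\lam).$

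For the converse I would assume $L(\lam)$ is a linearization of $X(\lam)$ and fix unimodular $U(\lam),V(\lam)$ with $U(\lam)L(\lam)V(\lam)=I_{(m-1)n}\oplus X(\lam).$ Conjugating $\mathbb{L}(\lam)$ by $\diag(U(\lam),I_r)$ and $\diag(V(\lam),I_r)$ then already reproduces the $(1,1)$ and $(2,2)$ blocks of $I_{(m-1)n}\oplus\mathcal{X}(\lam)$ and leaves the off-diagonal blocks $U(\lam)L_{12}$ and $L_{21}V(\lam);$ hence this conjugation realizes $\mathbb{L}(\lam)\thicksim_{se}I_{(m-1)n}\oplus\mathcal{X}(\lam)$ if and only if $U(\lam)L_{12}=e_m\otimes X_{12}$ and $L_{21}V(\lam)=e_m^T\otimes X_{21},$ i.e.\ $L_{12}=U(\lam)^{-1}(e_m\otimes X_{12})$ and $L_{21}=(e_m^T\otimes X_{21})V(\lam)^{-1},$ and since $L_{12},L_{21}$ are constant this is exactly the requirement that $U(\lam)^{-1}(e_m\otimes X_{12})$ and $(e_m^T\otimes X_{21})V(\lam)^{-1}$ be constant and equal to $L_{12}$ and $L_{21},$ respectively. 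The only place that needs a word of care — and the main (essentially the only) obstacle — is that $\mathbb{L}(\lam)$ could a priori be a Rosenbrock linearization via a \emph{different} unimodular pair; but the $(1,1)$ block of any such equivalence again transforms $L(\lam)$ into $I_{(m-1)n}\oplus X(\lam),$ so the condition in the statement is to be read relative to such a witnessing pair, exactly as in the discussion surrounding~(\ref{slin}) that precedes the corollary. With that understood the two implications fit together and the proof is complete.
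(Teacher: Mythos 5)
Your proposal is correct and follows essentially the same route as the paper: the corollary is there just a summary of the block computation surrounding (\ref{slin}), namely regrouping $I_{(m-1)n}\oplus\mathcal{X}(\lam)$ into the $mn\mid r$ partition and comparing the four blocks of $\left[\begin{smallmatrix} U(\lam) & 0\\ 0 & I_r\end{smallmatrix}\right]\mathbb{L}(\lam)\left[\begin{smallmatrix} V(\lam) & 0\\ 0 & I_r\end{smallmatrix}\right]$, which is exactly what you do. Your remark that the condition must be read relative to a witnessing unimodular pair $(U(\lam),V(\lam))$ is the same (implicit) reading the paper takes, so no gap remains.
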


So, the moot question is: Are $U(\lam)^{-1} (e_m \otimes X_{12}) $ and $ (e_m^T \otimes X_{21}) V(\lam)^{-1} $ constant matrix polynomials when $L(\lam)$ is a linearization of $X(\lam)?$ It turns out that it is indeed the case when $L(\lam)$ is a Fiedler pencil of $X(\lam)$ and that
the Fiedler pencils in Theorem~\ref{fflptflr} are Rosenbrock linearizations of $\mathcal{X}(\lam).$ Computation of the last block  column and row of $ U(\lam)^{-1}$ and $V(\lam)^{-1},$ respectively, is a laborious task  and involves long calculations. We therefore adapt the method of proof presented in~\cite{TDM} for matrix polynomials to the case of system polynomials and show that Fiedler pencils of system polynomials are indeed Rosenbrock linearizations.

\begin{definition}[Horner shift, \cite{TDM}]
Let $P(\lambda) = A_{0} + \lambda A_{1} + \cdots + \lambda^{m}A_{m}$ be a matrix polynomial of degree $m$. For $k = 0, \ldots m$, the degree $k$ Horner shift of $P(\lambda)$ is the matrix polynomial $P_{k}(\lambda) := A_{m-k} + \lambda A_{m-k+1} + \cdots + \lambda^{k}A_{m}$. These Horner shifts satisfy the following:
\begin{align}
& P_{0}(\lambda) = A_{m}, \nonumber \\
& P_{k+1}(\lambda) = \lambda P_{k}(\lambda) + A_{m-k-1} , \mbox{  for } 0\leq k\leq m-1, \\
& P_{m}(\lambda) = P(\lambda). \nonumber \label{hs}
\end{align}
\end{definition}

Let $H := (H_{ij})$ be a block $m \times n$ matrix with $p\times q$  blocks $H_{ij}.$ The {\em block transpose} of $H,$ denoted by $H^{\mathcal{B}},$ is the block $n \times m$ matrix  with $p \times q$ blocks defined by $(H^{\mathcal{B}})_{ij} := H_{ji},$ see~\cite{TDM}.  Consider the auxiliary matrices associated with a matrix polynomial given below.

\begin{definition}[\cite{TDM}, Definition~4.2] \label{qrtdmfmp}
Let $P(\lambda) = \sum\limits_{i = 0}^{m}\lambda^{i} A_{i}$ be an $n \times n$ matrix polynomial, and let $P_{i}(\lambda)$ be the degree $i$ Horner shift of $P(\lambda)$. For $1 \leq i \leq m-1$, define the following $nm \times nm$ matrix polynomials:
{\small $$
 Q_{i}(\lambda) := \left[
                    \begin{array}{cccc}
                      I_{(i-1)n} &  &  &  \\
                       & I_{n} & \lambda I_{n} &  \\
                       & 0_{n} & I_{n} &  \\
                       &  &  & I_{(m-i-1)n} \\
                    \end{array}
                  \right], \,\, R_{i}(\lambda) := \left[
      \begin{array}{cccc}
        I_{(i-1)n} &  &  &  \\
          & 0_{n} & I_{n} &  \\
         & I_{n} & P_{i}(\lambda) &  \\
          &  &  & I_{(m-i-1)n} \\
      \end{array}
    \right], $$

$$ T_{i}(\lam) := \left[
      \begin{array}{cccc}
        0_{(i-1)n} &  &  &  \\
          & 0_{n} & \lam P_{i-1}(\lam) &  \\
         & \lam I_{n} & \lam^{2}P_{i-1}(\lam) &  \\
          &  &  & 0_{(m-i-1)n} \\
      \end{array}
    \right], $$
$$ D_{i}(\lambda) := \left[
      \begin{array}{cccc}
        0_{(i-1)n} &  &  &  \\
          & P_{i-1}(\lam) & 0_{n}  &  \\
         & 0_{n} & I_{n} &  \\
          &  &  & I_{(m-i-1)n} \\
      \end{array}
    \right], $$}
and  $D_{m}(\lam) := \diag \left[0_{(m-1)n},  P_{m-1}(\lam)\right].
$
For simplicity, we often write $Q_{i}, R_{i}, T_{i}, D_{i}$ in place of $Q_{i}(\lam), R_{i}(\lam), T_{i}(\lam), D_{i}(\lam)$. Note that $D_{1}(\lambda) = M_{m}$, and $Q_{i}(\lam)$, $R_{i}(\lam)$ are unimodular for all $i = 1, \ldots, m-1$. Also note that $R_{i}^{\mathcal{B}}(\lam) = R_{i}(\lambda).$
\end{definition}

The auxiliary matrices satisfy the following relations.

\begin{lemma}[\cite{TDM}, Lemma~4.3]
Let $Q_{i}, R_{i}, T_{i}, D_{i}$ be as in Definition \ref{qrtdmfmp} and $M_i$'s be Fiedler matrices associated with $P(\lam).$ Then the following relations hold for $i = 1, \ldots, m-1$.
\begin{itemize}

\item[(a)] $Q_{i}^{\mathcal{B}}(\lambda D_{i})R_{i} = \lambda D_{i+1} + T_{i}$, and $Q_{i}^{\mathcal{B}}(M_{m-(i+1)}M_{m-i})R_{i} = M_{m-(i+1)} + T_{i}$.

\item[(b)] $R_{i}^{\mathcal{B}}(\lambda D_{i})Q_{i} = \lambda D_{i+1} + T_{i}^{\mathcal{B}}$, and $R_{i}^{\mathcal{B}}(M_{m-i}M_{m-(i+1)})Q_{i} = M_{m-(i+1)} + T_{i}^{\mathcal{B}}$.

\item[(c)] $T_{i}M_{j} = M_{j}T_{i} = T_{i}$ and $T_{i}^{\mathcal{B}}M_{j} = M_{j}T_{i}^{\mathcal{B}} = T_{i}^{\mathcal{B}}$ for all $j \leq m-i-2$.
\end{itemize}
\end{lemma}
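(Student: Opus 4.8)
This is Lemma~4.3 of~\cite{TDM}; for completeness I indicate the argument. The matrices $Q_i,Q_i^{\mathcal B},R_i(=R_i^{\mathcal B}),D_i,D_{i+1},T_i,T_i^{\mathcal B}$ and the Fiedler factors $M_{m-i},M_{m-i-1}$ all share the feature that, viewed as $m\times m$ arrays of $n\times n$ blocks, each differs from a block-diagonal matrix (with blocks drawn from $\{0_n,I_n,P_k(\lambda)\}$) only in block-rows and block-columns whose indices lie in $\{i,i+1,i+2\}$. Hence each asserted identity may be checked blockwise, and every verification collapses to a finite computation inside the $\{i,i+1,i+2\}\times\{i,i+1,i+2\}$ subarray, with all remaining blocks matching trivially. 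The sole arithmetic input is the Horner recursion $P_{k+1}(\lambda)=\lambda P_k(\lambda)+A_{m-k-1}$ together with $P_0(\lambda)=A_m$.

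For~(a) the plan is to interpret the two unimodular factors as elementary block operations: left multiplication by $Q_i^{\mathcal B}$ replaces block-row $i+1$ by $\lambda\cdot(\text{block-row }i)+(\text{block-row }i+1)$ and fixes the other block-rows, while right multiplication by $R_i$ replaces block-column $i$ by block-column $i+1$ and block-column $i+1$ by $(\text{block-column }i)+(\text{block-column }i+1)\,P_i(\lambda)$. Performing these on $\lambda D_i$ and then using $P_i(\lambda)=\lambda P_{i-1}(\lambda)+A_{m-i}$ to rewrite the $(i+1,i+1)$ block as $\lambda P_i(\lambda)+\lambda^2P_{i-1}(\lambda)$ yields exactly $\lambda D_{i+1}+T_i$. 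For the Fiedler version one first multiplies out $M_{m-i-1}M_{m-i}$, a banded matrix supported on the window $\{i,i+1,i+2\}^2$, and then applies the same two operations; here $-A_{m-i}+P_i(\lambda)=\lambda P_{i-1}(\lambda)$ supplies the one nontrivial coefficient and the result is $M_{m-i-1}+T_i$. Part~(b) follows by the same computation with the roles of the left and right factors exchanged ($R_i^{\mathcal B}=R_i$ acting on the left, $Q_i$ on the right); equivalently it is the block transpose of~(a) — using $D_i^{\mathcal B}=D_i$, $R_i^{\mathcal B}=R_i$, $M_k^{\mathcal B}=M_k$, and the fact that block transposition reverses each product occurring here because no two non-scalar blocks are ever multiplied together — and this carries $T_i$ to $T_i^{\mathcal B}$.

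Part~(c) needs no computation: for $j\le m-i-2$ the only nontrivial block-row and block-column indices of $M_j$ are $m-j$ and $m-j+1$, both $\ge i+2$, whereas the nonzero blocks of $T_i$ (and of $T_i^{\mathcal B}$) occupy only block-rows and block-columns $i$ and $i+1$; hence every block of $M_j$ that a nonzero block of $T_i$ could meet is the corresponding identity block, giving $T_iM_j=T_i$, and $M_jT_i=T_i$ (and the $T_i^{\mathcal B}$ analogue) follow in the same way. The step that demands genuine care is the block-index bookkeeping — remembering that $M_k$ carries its $2\times2$ window at block-rows $m-k$ and $m-k+1$, that $D_i$ annihilates the first $i-1$ block-rows, and that $T_i$ has its nonzero blocks at block-rows and block-columns $i,i+1$ while $T_i^{\mathcal B}$ is its block transpose; once this indexing is pinned down, each identity falls out from inspecting a single $3\times3$ block array and one use of the Horner relation.
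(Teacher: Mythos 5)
The paper offers no proof of this statement: it is imported verbatim from \cite{TDM} (Lemma~4.3 there) and used as a black box, so there is no internal argument to compare yours against; judged on its own, your blockwise verification is correct and is exactly the direct computation one would (and \cite{TDM} does) perform. Your reduction to the $\{i,i+1,i+2\}$ window is sound, and your justification for obtaining (b) from (a) by block transposition is legitimate precisely because of the observation you supply: in every block product that actually occurs, at least one factor is $0_n$, $I_n$ or $\lambda I_n$, so the reversal rule $(XY)^{\mathcal{B}}=Y^{\mathcal{B}}X^{\mathcal{B}}$ holds for these particular products, and together with $D_i^{\mathcal{B}}=D_i$, $R_i^{\mathcal{B}}=R_i$, $M_k^{\mathcal{B}}=M_k$ it carries $T_i$ to $T_i^{\mathcal{B}}$ and swaps $M_{m-(i+1)}M_{m-i}$ into $M_{m-i}M_{m-(i+1)}$. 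Two small inaccuracies, neither of which harms the argument: in the first identity of (a) the Horner relation is not actually needed, since the window product directly yields $\lambda P_i(\lambda)+\lambda^2P_{i-1}(\lambda)$ in the $(i+1,i+1)$ block, Horner entering only in the Fiedler versions through $P_i(\lambda)-A_{m-i}=\lambda P_{i-1}(\lambda)$; and your generic description of the active windows silently assumes $1\le i\le m-2$ and $1\le j$, so the boundary cases $i=m-1$ (where $M_{m-(i+1)}=M_0$ and $D_{i+1}=D_m$ have a slightly different shape) and $j=0$ in (c) deserve a one-line separate check, which the same window computation passes.
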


We extend these results to the case of system polynomials. For this purpose, we introduce ``block transpose" of a system matrix  as follows.

\begin{definition}[Block transpose]\label{btfrmf} Let $\mathcal{A} $ be an $(mn+r)\times (mn+r)$ system matrix given by
 $$\mathcal{A} := \left[
                 \begin{array}{c|c}
                   A & e_i \otimes X \\
                   \hline
                   e_j^{T} \otimes Y & Z \\
                 \end{array}
               \right], $$   where $A := [A_{ij}]$ is an $m \times m$ block matrix with $A_{ij} \in \C^{n \times n}$, $X \in \C^{n\times r}, Y \in \C^{r\times n},$ $Z \in \C^{r \times r}$ and $e_k$ is the $k$-th column of $I_m.$  The {\em block transpose} of $\mathcal{A},$ denoted by $\mathcal{A}^{\mathbb{B}},$ is defined by $$\mathcal{A}^{\mathbb{B}} := \left[
                 \begin{array}{c|c}
                   A^{\mathcal{B}} & e_j \otimes X \\
                   \hline
                   e_i^{T} \otimes Y & Z \\
                 \end{array}
               \right], $$ where $A^{\mathcal{B}}$ is the block transpose of $A.$
\end{definition}

Recall that $C_P(\lam)$ is the first companion form of the matrix polynomial $P(\lam).$ It is well known that the second companion form $C_2(\lam)$ of $ P(\lam) $ is the block transpose of $C_P(\lam),$ that is, $ C_2(\lam) = {C_P(\lam)}^{\mathcal{B}},$ see~\cite{TDM}. It turns out that the same is true for the second companion form $\mathcal{C}_2(\lam)$ of the system polynomial $\mathcal{S}(\lam).$ Indeed, in view of Theorem~\ref{fflptflr}, Proposition~\ref{compan}, and (\ref{scfor}), we have $\mathcal{C}_2(\lam) = \mathcal{C}(\lam)^{\mathbb{B}}.$

We now define auxiliary system polynomials as follows.

\begin{definition}[Auxiliary system polynomials] \label{amr}
Let $Q_{i}(\lambda), R_{i}(\lambda), T_{i}(\lambda)$, and $D_{i}(\lambda)$ be as in Definition \ref{qrtdmfmp}.
For $i = 1, \ldots, m-1$, define $(nm+r) \times (nm+r)$ system polynomials:
\beano
\mathcal{Q}_{i}(\lambda) &:=& \left[
                                \begin{array}{c|c}
                                  Q_{i}(\lambda) & 0 \\
                                  \hline
                                  0 & I_{r} \\
                                \end{array}
                              \right], \indent \mathcal{R}_{i}(\lambda) := \left[
                                                                             \begin{array}{c|c}
                                                                            R_{i}(\lambda) & 0 \\
                                                                               \hline
                                                                             0 & I_{r} \\
                                                                             \end{array}
                                                                            \right],  \\
\mathcal{T}_{i}(\lambda) &:=& \left[
                                \begin{array}{c|c}
                                  T_{i}(\lambda) & 0 \\
                                  \hline
                                  0 & 0_{r} \\
                                \end{array}
                              \right], \indent \mathcal{D}_{i}(\lambda) := \left[
                                                                             \begin{array}{c|c}
                                                                            D_{i}(\lambda) & 0 \\
                                                                               \hline
                                                                             0 & -E \\
                                                                             \end{array}
                                                                            \right],  \\
\mbox{ and }\,\, \mathcal{D}_{m}(\lambda) & :=& \left[
                                  \begin{array}{c|c}
                                    D_{m} & 0 \\
                                    \hline
                                    0 & -E \\
                                  \end{array}
                                \right].  \eeano
Note that $\mathcal{D}_{1}(\lambda) = \left[
                                          \begin{array}{c|c}
                                            D_{1}(\lambda) & 0 \\
                                            \hline
                                            0 & -E \\
                                          \end{array}
                                        \right] = \left[
                                                    \begin{array}{c|c}
                                                      M_{m} & 0 \\
                                                    \hline
                                                      0 & -E \\
                                                    \end{array}
                                                  \right] = \mathbb{M}_{m}$ and that $\mathcal{Q}_{i}(\lambda)$ and $\mathcal{R}_{i}(\lambda)$ are unimodular matrix polynomials for $i = 1, \ldots, m-1$. Also, note that $\mathcal{R}_{i}^{\mathbb{B}}(\lambda) = \mathcal{R}_{i}(\lambda)$ for $i = 1, \ldots, m-1$.

\end{definition}

The auxiliary system polynomials satisfy the following relations.

\begin{lemma}
Let $\mathcal{Q}_{i}, \mathcal{R}_{i}, \mathcal{T}_{i}, \mathcal{D}_{i}$ be the system polynomials given in Definition \ref{amr} and $\mathbb{M}_i$'s be Fiedler matrices associated with $\mathcal{S}(\lam).$ Then the following system equivalence relations hold for $i = 1, \ldots, m-1$.
\begin{itemize}

\item[(a)] $\mathcal{Q}_{i}^{\mathbb{B}}(\lambda \mathcal{D}_{i})\mathcal{R}_{i} = \lambda \mathcal{D}_{i+1} + \mathcal{T}_{i}$, and $\mathcal{Q}_{i}^{\mathbb{B}}(\mathbb{M}_{m-(i+1)}\mathbb{M}_{m-i})\mathcal{R}_{i} = \mathbb{M}_{m-(i+1)} + \mathcal{T}_{i}$.

\item[(b)] $\mathcal{R}_{i}^{\mathbb{B}}(\lambda \mathcal{D}_{i})\mathcal{Q}_{i} = \lambda \mathcal{D}_{i+1} + \mathcal{T}_{i}^{\mathbb{B}}$, and $\mathcal{R}_{i}^{\mathbb{B}}(\mathbb{M}_{m-i}\mathbb{M}_{m-(i+1)})\mathcal{Q}_{i} = \mathbb{M}_{m-(i+1)} + \mathcal{T}_{i}^{\mathbb{B}}$.

\item[(c)] $\mathcal{T}_{i}\mathbb{M}_{j} = \mathbb{M}_{j}\mathcal{T}_{i} = \mathcal{T}_{i}$ and $\mathcal{T}_{i}^{\mathbb{B}}\mathbb{M}_{j} = \mathbb{M}_{j}\mathcal{T}_{i}^{\mathbb{B}} = \mathcal{T}_{i}^{\mathbb{B}}$ for all $j \leq m-i-2$.
\end{itemize}
\end{lemma}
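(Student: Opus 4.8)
The plan is to reduce each identity for the system polynomials $\mathcal{Q}_i,\mathcal{R}_i,\mathcal{T}_i,\mathcal{D}_i,\mathbb{M}_j$ to the corresponding identity for the block matrices $Q_i,R_i,T_i,D_i,M_j$ proved in \cite{TDM} (Lemma~4.3, quoted above), by exploiting the uniform block structure of all the objects in Definition~\ref{amr}. Every auxiliary system polynomial has the shape $\left[\begin{array}{c|c} \ast & 0 \\ \hline 0 & \bullet \end{array}\right]$, and the only Fiedler matrices that enter parts (a)--(c) are $\mathbb{M}_{m-i}$ and $\mathbb{M}_{m-(i+1)}$; since $i$ ranges over $1,\dots,m-1$, the indices $m-i$ and $m-(i+1)$ lie in $\{1,\dots,m-1\}$, so by (\ref{imfr}) we have $\mathbb{M}_{m-i} = M_{m-i}\oplus I_r$ and $\mathbb{M}_{m-(i+1)} = M_{m-(i+1)}\oplus I_r$ (the special factors $\mathbb{M}_0$ and $\mathbb{M}_m$ never appear). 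Thus throughout parts (a) and (c), and the first identity of (b), the bottom-right block is just $I_r$ (or $0_r$ for $\mathcal{T}_i$), and the computations decouple completely into the $(1,1)$ block, where they are exactly the identities of Lemma~4.3 of \cite{TDM}, and the trivial $(2,2)$ block.

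Concretely, I would first record the block-multiplication rule: for $A\oplus S$, $B\oplus T$ of conformal block sizes, $(A\oplus S)(B\oplus T) = (AB)\oplus(ST)$, and $(A\oplus S)^{\mathbb B}$ acts blockwise as $A^{\mathcal B}\oplus S$ on the $(1,1)$ block since the off-diagonal coupling blocks $e_i\otimes X$ are zero here (Definition~\ref{btfrmf} with $X=0$, $Y=0$ leaves only the reshuffling of $A$). Then part (a): $\mathcal{Q}_i^{\mathbb B}(\lambda\mathcal{D}_i)\mathcal{R}_i = \big(Q_i^{\mathcal B}(\lambda D_i)R_i\big)\oplus\big(I_r\cdot(-\lambda E)\cdot I_r\big) = (\lambda D_{i+1}+T_i)\oplus(-\lambda E) = \lambda\mathcal{D}_{i+1}+\mathcal{T}_i$, using Lemma~4.3(a) on the $(1,1)$ block and the definitions of $\mathcal{D}_{i+1}$ and $\mathcal{T}_i$ (whose $(2,2)$ block is $0_r = -\lambda E + \lambda E$, matching). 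The second identity of (a) is identical with $\lambda D_i$ replaced by $M_{m-(i+1)}M_{m-i}$ and $\lambda\mathcal D_{i+1}$ by $M_{m-(i+1)}$; the $(2,2)$ block contributes $I_r\cdot(I_r I_r)\cdot I_r = I_r$ on the left and $I_r$ (from $\mathbb M_{m-(i+1)}$) plus $0_r$ (from $\mathcal T_i$) on the right. Part (b) is the mirror image, using $\mathcal{R}_i^{\mathbb B}=\mathcal{R}_i$ and Lemma~4.3(b). Part (c) reduces to $\mathcal{T}_i\mathbb{M}_j = (T_iM_j)\oplus(0_r\cdot I_r) = T_i\oplus 0_r = \mathcal{T}_i$ for $j\le m-i-2$ (note $m-i-2 \le m-3 < m-1$, so indeed $\mathbb M_j = M_j\oplus I_r$ by (\ref{imfr})), invoking Lemma~4.3(c), and symmetrically for $\mathcal{T}_i^{\mathbb B}$.

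I do not expect a genuine obstacle here; the result is essentially a direct-sum bookkeeping exercise on top of \cite{TDM}. The one point requiring care is to confirm that the block transpose of Definition~\ref{btfrmf} really does restrict to the ordinary block transpose $A\mapsto A^{\mathcal B}$ on the $(1,1)$ corner when the coupling blocks vanish, and that $\mathcal{R}_i^{\mathbb B}=\mathcal{R}_i$ follows from $R_i^{\mathcal B}=R_i$ together with the zero coupling; once that is in place the rest is automatic. I would write the proof as: "All objects in Definition~\ref{amr} have the form $N\oplus P$ with zero coupling blocks, so block transposition and block multiplication act blockwise; applying Lemma~4.3 of \cite{TDM} to the $(1,1)$ blocks and checking the $r\times r$ corner (which is one of $I_r$, $-E$, $-\lambda E$, $0_r$) gives each identity at once."
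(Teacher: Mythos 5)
Your overall strategy—reduce everything blockwise to Lemma~4.3 of \cite{TDM}—is indeed the backbone of the paper's proof, but your proposal has a genuine gap: the assertion that ``the special factors $\mathbb{M}_0$ and $\mathbb{M}_m$ never appear'' is false for $\mathbb{M}_0$. In parts (a) and (b) the index $m-(i+1)$ runs down to $0$ when $i=m-1$, so the products $\mathbb{M}_0\mathbb{M}_1$ and $\mathbb{M}_1\mathbb{M}_0$ do occur; and in part (c) the hypothesis $j\le m-i-2$ allows $j=0$ whenever $i\le m-2$ (your bound $m-i-2\le m-3$ only controls the upper end of the range). Unlike $\mathbb{M}_1,\dots,\mathbb{M}_{m-1}$, the matrix $\mathbb{M}_0$ is not of the form $M_0\oplus I_r$: by (\ref{0mmfr}) it carries the coupling blocks $-e_m\otimes C$ and $-e_m^{T}\otimes B$ and has $-A$ rather than $I_r$ in its $(2,2)$ corner, so in these boundary cases the computation does not decouple into a $(1,1)$ block plus a trivial $r\times r$ corner, and your direct-sum bookkeeping does not apply as written.

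These are precisely the cases where the lemma says something beyond \cite{TDM}, and they are where the paper's proof spends its effort: for $i=m-1$ in (a) one must verify $Q_{m-1}^{\mathcal{B}}(-e_m\otimes C)=-e_m\otimes C$ and $(-e_m^{T}\otimes B)M_1R_{m-1}=-e_m^{T}\otimes B$, which is what makes $\mathcal{Q}_{m-1}^{\mathbb{B}}(\mathbb{M}_0\mathbb{M}_1)\mathcal{R}_{m-1}=\mathbb{M}_0+\mathcal{T}_{m-1}$ come out correctly (and similarly for the mirror identity in (b)); for (c) with $j=0$ one must check identities such as $T_{i}^{\mathcal{B}}(-e_m\otimes C)=0$ and $(-e_m^{T}\otimes B)T_i=0$, which hold because the nonzero block rows and columns of $T_i$ and $T_i^{\mathcal{B}}$ sit in block positions $i,i+1\le m-1$ and hence miss the $m$-th block when $i\le m-2$. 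Your argument handles the generic cases ($m-(i+1)\ge 1$, respectively $j\ge 1$) exactly as the paper does, but it is incomplete until you add these coupling-block verifications for the $\mathbb{M}_0$ cases.
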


\begin{proof}
 (a) We have
\beano
\mathcal{Q}_{i}^{\mathbb{B}}(\lambda \mathcal{D}_{i})\mathcal{R}_{i}
& =&  \left[
     \begin{array}{c|c}
       Q_{i}^{\mathcal{B}} & 0 \\
       \hline
       0 & I_{r} \\
     \end{array}
   \right] \left[
            \begin{array}{c|c}
            \lam D_{i} & 0 \\
            \hline
            0 & -\lam E \\
             \end{array}
               \right] \left[
                         \begin{array}{c|c}
                          R_{i} & 0 \\
                          \hline
                          0 & I_{r} \\
                          \end{array}
                            \right]
 =\left[
      \begin{array}{c|c}
       Q_{i}^{\mathcal{B}} \lambda D_{i}R_{i} & 0 \\
       \hline
      0 & -\lam E \\
        \end{array}
        \right]\\ & =&  \left[
                     \begin{array}{c|c}
                    \lam D_{i+1}+T_{i} & 0 \\
                     \hline
                     0 & -\lam E \\
                      \end{array}
                         \right] = \lambda \mathcal{D}_{i+1} + \mathcal{T}_{i}
\eeano
and
$$
\mathcal{Q}_{i}^{\mathbb{B}}(\mathbb{M}_{m-(i+1)}\mathbb{M}_{m-i})\mathcal{R}_{i} =
\left[
  \begin{array}{c|c}
    Q_{i}^{\mathcal{B}} & 0 \\
    \hline
    0 & I_{r} \\
  \end{array}
\right]\left[
         \begin{array}{c|c}
           M_{m-(i+1)}& 0 \\
           \hline
           0 & I_{r} \\
         \end{array}
       \right]\left[
                \begin{array}{c|c}
                  M_{m-i} & 0 \\
                  \hline
                  0 & I_{r} \\
                \end{array}
              \right]\left[
                       \begin{array}{c|c}
                         R_{i} & 0 \\
                         \hline
                         0 & I_{r} \\
                       \end{array}
                     \right] $$
$ = \left[
      \begin{array}{c|c}
    Q_{i}^{\mathcal{B}} M_{m-(i+1)} M_{m-i} R_{i} & 0 \\
        \hline
        0 & I_{r} \\
      \end{array}
    \right] = \left[
              \begin{array}{c|c}
                M_{m -(i+1)} + T_{i} & 0 \\
                \hline
                0 & I_{r} \\
              \end{array}
            \right]
 = \mathbb{M}_{m -(i+1)} + \mathcal{T}_{i}.$
For $i = m-1$, we have
$
\mathcal{Q}_{m-1}^{\mathbb{B}}(\mathbb{M}_{0}\mathbb{M}_{1})\mathcal{R}_{m-1} $
\beano &=&
\left[
  \begin{array}{c|c}
    Q_{m-1}^{\mathcal{B}} & 0 \\
    \hline
    0 & I_{r} \\
  \end{array}
\right]\left[
         \begin{array}{c|c}
           M_{0}& -e_{m}\otimes C \\
           \hline
           -e_{m}^{T}\otimes B & -A \\
         \end{array}
       \right]\left[
                \begin{array}{c|c}
                  M_{1} & 0 \\
                  \hline
                  0 & I_{r} \\
                \end{array}
              \right]\left[
                       \begin{array}{c|c}
                         R_{m-1} & 0 \\
                         \hline
                         0 & I_{r} \\
                       \end{array}
                     \right] \\
& = &\left[
  \begin{array}{c|c}
    Q_{m-1}^{\mathcal{B}} & 0 \\
    \hline
    0 & I_{r} \\
  \end{array}
\right]\left[
         \begin{array}{c|c}
           M_{0}M_{1}R_{m-1} & -e_{m}\otimes C \\
           \hline
           -e_{m}^{T}\otimes B M_{1} R_{m-1} & -A \\
         \end{array}
       \right] \\
& =& \left[
  \begin{array}{c|c}
   Q_{m-1}^{\mathcal{B}}M_{0}M_{1}R_{m-1}  & Q_{m-1}^{\mathcal{B}} (-e_{m}\otimes C) \\
    \hline
   -e_{m}^{T}\otimes B M_{1} R_{m-1} & -A \\
  \end{array}
\right]
= \left[
      \begin{array}{c|c}
      M_{0} + T_{m-1} & -e_{m}\otimes C \\
       \hline
       -e_{m}^{T}\otimes B & -A \\
         \end{array}
         \right] \\  &=& \mathbb{M}_{0} + \mathcal{T}_{m-1},
         \eeano
since $\left[
         \begin{array}{ccc}
           I_{(m-2)n} &  &  \\
            & I_{n} & \lambda I_{n} \\
            & 0 & I_{n} \\
         \end{array}
       \right] \left[
                 \begin{array}{c}
                   0 \\
                   \vdots \\
                   -C \\
                 \end{array}
               \right] = -e_{m}\otimes C
$ and
\beano
-e_{m}^{T}\otimes B M_{1} R_{m-1} &=& -e_{m}^{T}\otimes B\left[
                                        \begin{array}{ccc}
                                          I_{(m-2)n} &  &  \\
                                           & -A_{1} & I_{n} \\
                                           & I_{n} & 0 \\
                                        \end{array}
                                      \right]\left[
                                               \begin{array}{ccc}
                                                 I_{(m-2)n} &  &  \\
                                                  & 0 & I_{n} \\
                                                  & I_{n} & P_{m-1}(\lambda) \\
                                               \end{array}
                                             \right]\\
& =& \left[
      \begin{array}{ccc}
        0 & \cdots & -B \\
      \end{array}
    \right] \left[
              \begin{array}{ccc}
                I_{(m-2)n} &  &  \\
                 & I_{n} & -A_{1}+ P_{m-1}(\lambda) \\
                 & 0 & I_{n} \\
              \end{array}
            \right] = -e_{m}^{T}\otimes B.
\eeano

 (b) Now
\beano
\mathcal{R}_{i}^{\mathbb{B}}(\lambda \mathcal{D}_{i})\mathcal{Q}_{i} & =&
\left[
  \begin{array}{c|c}
    R_{i} & 0 \\
    \hline
    0 & I_{r} \\
  \end{array}
\right]\left[
         \begin{array}{c|c}
           \lambda D_{i} & 0 \\
           \hline
           0 & -\lambda E \\
         \end{array}
       \right]\left[
                \begin{array}{c|c}
                  Q_{i} & 0 \\
                  \hline
                  0 & I_{r} \\
                \end{array}
              \right]
 = \left[
      \begin{array}{c|c}
        R_{i} \lambda D_{i}Q_{i} & 0 \\
        \hline
        0 & -\lambda E \\
      \end{array}
    \right] \\ & =& \left[
                \begin{array}{c|c}
                  \lambda D_{i+1} + T_{i}^{\mathcal{B}} & 0 \\
                  \hline
                  0 & -\lambda E \\
                \end{array}
              \right]
= \lambda \mathcal{D}_{i+1} + \mathcal{T}_{i}^{\mathbb{B}}.
\eeano
Similarly $\mathcal{R}_{i}^{\mathbb{B}}(\mathbb{M}_{m-i}\mathbb{M}_{m-(i+1)})\mathcal{Q}_{i} = \mathbb{M}_{m-(i+1)} + \mathcal{T}_{i}^{\mathbb{B}}$.

 (c) Finally, we have
 \beano
\mathcal{T}_{i} \mathbb{M}_{j} &=& \left[
                                    \begin{array}{c|c}
                                      T_{i} & 0 \\
                                      \hline
                                      0 & 0 \\
                                    \end{array}
                                  \right]\left[
                                           \begin{array}{c|c}
                                             M_{j} & 0 \\
                                             \hline
                                             0 & I_{r} \\
                                           \end{array}
                                         \right] = \left[
                                                     \begin{array}{c|c}
                                                       T_{i}M_{j} & 0 \\
                                                       \hline
                                                       0 & 0 \\
                                                     \end{array}
                                                   \right]  = \left[
                                                               \begin{array}{c|c}
                                                                M_{j} T_{i} & 0 \\
                                                                 \hline
                                                                 0 & 0 \\
                                                               \end{array}
                                                             \right] \\ &=& \mathbb{M}_{j} \mathcal{T}_{i}
 =    \left[
         \begin{array}{c|c}
           T_{i} & 0 \\
           \hline
           0 & 0 \\
         \end{array}
       \right] = \mathcal{T}_{i}.
\eeano
Similarly $\mathcal{T}_{i}^{\mathbb{B}} \mathbb{M}_{j} = \mathbb{M}_{j} \mathcal{T}_{i}^{\mathbb{B}} = \mathcal{T}_{i}^{\mathbb{B}}$. For $i = m-2$ we have
\beano
\mathcal{T}_{m-2}^{\mathbb{B}} \mathbb{M}_{0} &=& \left[
                                                    \begin{array}{c|c}
                                                      T_{m-2}^{\mathcal{B}} & 0 \\
                                                      \hline
                                                      0 & 0 \\
                                                    \end{array}
                                                  \right]\left[
                                                           \begin{array}{c|c}
                                                             M_{0} & -e_{m}\otimes C \\
                                                             \hline
                                                             -e_{m}^{T}\otimes B & -A \\
                                                           \end{array}
                                                         \right]
= \left[
       \begin{array}{c|c}
         T_{m-2}^{\mathcal{B}}M_{0} & T_{m-2}^{\mathcal{B}}(-e_{m}\otimes C) \\
         \hline
         0 & 0 \\
       \end{array}
     \right] \\
 &=& \left[
   \begin{array}{c|c}
    T_{m-2}^{\mathcal{B}}M_{0} & 0 \\
    \hline
    0 & 0 \\
      \end{array}
        \right] = \left[
                    \begin{array}{c|c}
                      T_{m-2}^{\mathcal{B}} & 0 \\
                      \hline
                      0 & 0 \\
                    \end{array}
                  \right] = \mathcal{T}_{m-2}^{\mathbb{B}},
\eeano
since $T_{m-2}^{\mathcal{B}}(-e_{m}\otimes C) = \left[
         \begin{array}{cccc}
           0_{(m-3)n} &  &  &  \\
            & 0 & \lambda I  &  \\
            & \lambda P_{m-3} & \lambda^{2}P_{m-3} &  \\
            &  &  & 0_{n} \\
         \end{array}
       \right] \left[
                 \begin{array}{c}
                   0 \\
                   \vdots \\
                   0 \\
                   -C \\
                 \end{array}
               \right] = 0.
$
This completes the proof.
\end{proof}

As in the case of matrix polynomial~\cite{TDM},  we now define a family of system pencils which will form the intermediate steps in the system equivalence  transformation of a Fiedler pencil $\mathbb{L}_{\sigma}(\lam)$  into $I_{(m-1)n} \oplus \mathcal{S}(\lam).$ Note that a Fiedler pencil of $\mathcal{S}(\lam)$ is a system pencil.

\begin{definition}\label{syspendef}
Let $\mathbb{L}_{\sigma}(\lambda) = \lambda \mathbb{M}_{m} - \mathbb{M}_{\sigma}$ be the Fiedler pencil of $\mathcal{S}(\lambda)$ associated with a bijection $\sigma$. For $j = 1, 2, \ldots, m$, define
$$\mathbb{M}_{\sigma}^{(j)} := \prod_{\sigma^{-1}(i)\leq m-j}\mathbb{M}_{\sigma^{-1}(i)}, $$ where the factors $\mathbb{M}_{\sigma^{-1}(i)}$ are in the same relative order as they are in $\mathbb{M}_{\sigma}$. Note that $\mathbb{M}_{\sigma}^{(1)} = \prod_{\sigma^{-1}(i)\leq m-1}\mathbb{M}_{\sigma^{-1}(i)} = \mathbb{M}_{\sigma}$  and that $\mathbb{M}_{\sigma}^{(m)} = \mathbb{M}_{0}$. Also for $j = 1, 2, \ldots, m$ define the $(nm+r)\times (nm+r)$ system pencils
$\mathbb{L}_{\sigma}^{(j)}(\lambda) := \lambda \mathcal{D}_{j}(\lambda) - \mathbb{M}_{\sigma}^{(j)}. $
 Observe that $\mathbb{L}_{\sigma}^{(1)}(\lambda) = \lambda \mathcal{D}_{1} - \mathbb{M}_{\sigma}^{(1)} = \lambda \mathbb{M}_{m} - \mathbb{M}_{\sigma} = \mathbb{L}_{\sigma}$ and that $$
\mathbb{L}_{\sigma}^{(m)}(\lambda) = \lambda \mathcal{D}_{m} - \mathbb{M}_{\sigma}^{(m)}
= \lambda \left[
            \begin{array}{c|c}
              D_{m} & 0 \\
             \hline
             0 & -E \\
            \end{array}
          \right] - \mathbb{M}_{0} =  \left[
            \begin{array}{c|c}
              -I_{(m-1)n} & 0 \\
             \hline
             0 & \mathcal{S}(\lam) \\
            \end{array}
          \right]. $$

\end{definition}

The next result shows that  $\mathbb{L}_{\sigma}^{(i)}(\lambda) \thicksim_{se} \mathbb{L}_{\sigma}^{(i+1)}(\lambda)$ for $i=1, 2, \ldots, m-1.$

\begin{lemma} \label{lietli+1} We have
$\mathbb{L}_{\sigma}^{(i)}(\lambda)\thicksim_{se} \mathbb{L}_{\sigma}^{(i+1)}(\lambda)$ for $i = 1, 2, \ldots, m-1$. More precisely,  if $\mathcal{Q}_{i}$ and $\mathcal{R}_{i}$ are the system polynomials given in Definition~\ref{amr}, then
$$\mathbb{L}_{\sigma}^{(i+1)}(\lambda)=
\begin{cases}
\mathcal{Q}_{i}^{\mathbb{B}} \mathbb{L}_{\sigma}^{(i)}(\lambda) \mathcal{R}_{i}, &  \text{ if } \sigma \text{ has a consecution at } m-i-1 \\
\mathcal{R}_{i}^{\mathbb{B}} \mathbb{L}_{\sigma}^{(i)}\mathcal{Q}_{i},  & \text{ if } \sigma \text{ has an inversion at } m-i-1.
\end{cases}$$
\end{lemma}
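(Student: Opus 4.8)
The plan is to mimic, at the level of system polynomials, the proof of the analogous statement for matrix polynomials in~\cite{TDM}, using the auxiliary system polynomials $\mathcal{Q}_i,\mathcal{R}_i,\mathcal{T}_i,\mathcal{D}_i$ and the system equivalence relations (a)--(c) of the preceding Lemma. Recall that $\mathbb{L}_\sigma^{(i)}(\lambda)=\lambda\mathcal{D}_i(\lambda)-\mathbb{M}_\sigma^{(i)}$ and $\mathbb{L}_\sigma^{(i+1)}(\lambda)=\lambda\mathcal{D}_{i+1}(\lambda)-\mathbb{M}_\sigma^{(i+1)}$, and that passing from $\mathbb{M}_\sigma^{(i)}$ to $\mathbb{M}_\sigma^{(i+1)}$ amounts to deleting the single Fiedler factor $\mathbb{M}_{m-i}$ from the product. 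The key structural fact I would isolate first is: since $\mathbb{M}_\sigma^{(i)}$ contains exactly the factors $\mathbb{M}_k$ with $k\le m-i$, and $\mathbb{M}_{m-i}$ commutes with every $\mathbb{M}_k$ for $k\le m-i-2$, the factor $\mathbb{M}_{m-i}$ sits adjacent to $\mathbb{M}_{m-i-1}$ in $\mathbb{M}_\sigma^{(i)}$ (up to reordering via commutativity), so we may write
$$\mathbb{M}_\sigma^{(i)}=\mathbb{P}\,\big(\mathbb{M}_{m-i-1}\mathbb{M}_{m-i}\big)\,\mathbb{Q}\quad\text{or}\quad \mathbb{M}_\sigma^{(i)}=\mathbb{P}\,\big(\mathbb{M}_{m-i}\mathbb{M}_{m-i-1}\big)\,\mathbb{Q},$$
according to whether $\sigma$ has a consecution or an inversion at $m-i-1$, where $\mathbb{P},\mathbb{Q}$ are products of $\mathbb{M}_k$'s with $k\le m-i-2$ and hence each commutes with $\mathcal{T}_i$ and $\mathcal{T}_i^{\mathbb{B}}$ by part (c).

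Next I would treat the consecution case (the inversion case being symmetric, replacing $\mathcal{Q}_i\leftrightarrow\mathcal{R}_i$ and using relation (b) in place of (a)). Conjugating $\mathbb{L}_\sigma^{(i)}(\lambda)$ by $\mathcal{Q}_i^{\mathbb{B}}$ on the left and $\mathcal{R}_i$ on the right, I split the computation into the $\lambda\mathcal{D}_i$ term and the $\mathbb{M}_\sigma^{(i)}$ term. For the first term, relation (a) gives directly $\mathcal{Q}_i^{\mathbb{B}}(\lambda\mathcal{D}_i)\mathcal{R}_i=\lambda\mathcal{D}_{i+1}+\mathcal{T}_i$. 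For the second term, I would move the commuting blocks $\mathbb{P},\mathbb{Q}$ out to the sides — but here care is needed because $\mathcal{Q}_i^{\mathbb{B}}$ and $\mathcal{R}_i$ do \emph{not} commute with $\mathbb{P},\mathbb{Q}$ in general; instead the right approach, following~\cite{TDM}, is to observe that $\mathbb{P}$ and $\mathbb{Q}$ only involve indices $\le m-i-2$ and hence act on block rows/columns disjoint from the $2\times 2$ block window on which $Q_i,R_i$ act nontrivially, so one can factor $\mathcal{Q}_i^{\mathbb{B}}\mathbb{P}=\mathbb{P}\mathcal{Q}_i^{\mathbb{B}}$ and $\mathbb{Q}\mathcal{R}_i=\mathcal{R}_i\mathbb{Q}$ (commutation of "far apart" block operations). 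Then applying relation (a) to the middle, $\mathcal{Q}_i^{\mathbb{B}}(\mathbb{M}_{m-i-1}\mathbb{M}_{m-i})\mathcal{R}_i=\mathbb{M}_{m-i-1}+\mathcal{T}_i$, yields $\mathcal{Q}_i^{\mathbb{B}}\mathbb{M}_\sigma^{(i)}\mathcal{R}_i=\mathbb{P}(\mathbb{M}_{m-i-1}+\mathcal{T}_i)\mathbb{Q}$. Finally, since $\mathbb{P}$ and $\mathbb{Q}$ commute with $\mathcal{T}_i$ by (c), this equals $\mathbb{P}\mathbb{M}_{m-i-1}\mathbb{Q}+\mathbb{P}\mathbb{Q}\,\mathcal{T}_i=\mathbb{M}_\sigma^{(i+1)}+\mathcal{T}_i$ (using that $\mathbb{P}\mathbb{M}_{m-i-1}\mathbb{Q}$ is precisely $\mathbb{M}_\sigma^{(i+1)}$, which is $\mathbb{M}_\sigma^{(i)}$ with $\mathbb{M}_{m-i}$ deleted). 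Subtracting, the stray $\mathcal{T}_i$ terms cancel: $\mathcal{Q}_i^{\mathbb{B}}\mathbb{L}_\sigma^{(i)}(\lambda)\mathcal{R}_i=(\lambda\mathcal{D}_{i+1}+\mathcal{T}_i)-(\mathbb{M}_\sigma^{(i+1)}+\mathcal{T}_i)=\lambda\mathcal{D}_{i+1}-\mathbb{M}_\sigma^{(i+1)}=\mathbb{L}_\sigma^{(i+1)}(\lambda)$.

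To conclude that this is a \emph{system} equivalence, I would note that $\mathcal{Q}_i,\mathcal{R}_i$ are block-diagonal of the form $\mathrm{diag}(Q_i,I_r)$, $\mathrm{diag}(R_i,I_r)$ with $Q_i,R_i$ unimodular $nm\times nm$ matrix polynomials, so left-multiplying by $\mathcal{Q}_i^{\mathbb{B}}=\mathrm{diag}(Q_i^{\mathcal{B}},I_r)$ and right-multiplying by $\mathcal{R}_i=\mathrm{diag}(R_i,I_r)$ is exactly a transformation of the form required in the Definition of system equivalence (the $(2,2)$ block $A-\lambda E$ is untouched), hence $\mathbb{L}_\sigma^{(i)}(\lambda)\thicksim_{se}\mathbb{L}_\sigma^{(i+1)}(\lambda)$. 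The inversion case at $m-i-1$ runs identically with $\mathcal{R}_i^{\mathbb{B}}=\mathcal{R}_i$ on the left and $\mathcal{Q}_i$ on the right, invoking relation (b). The main obstacle I anticipate is the bookkeeping in the step where $\mathbb{M}_{m-i}$ is shown to be adjacent to $\mathbb{M}_{m-i-1}$ and where $\mathbb{P},\mathbb{Q}$ are pulled past $\mathcal{Q}_i^{\mathbb{B}},\mathcal{R}_i$: one must verify carefully, using the explicit block structure in Definitions~\ref{qrtdmfmp} and~\ref{amr} together with the commutativity relation~(\ref{crr}), that every commutation invoked is legitimate — in particular that no factor among $\mathbb{P},\mathbb{Q}$ has index $m-i-1$, so that the only non-commuting interaction is the intended one at the window $\{m-i-1,m-i\}$. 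This is precisely the point where the argument for system polynomials must be checked to not be spoiled by the extra $r\times r$ block, and the commutation relation~(\ref{crr}) (which explicitly excludes only the pair $\mathbb{M}_0,\mathbb{M}_m$) guarantees it, since $m-i-1$ and $m-i$ are never simultaneously $\{0,m\}$ for $1\le i\le m-1$.
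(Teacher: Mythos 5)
Your proposal is correct and is essentially the paper's own argument: the paper simply states that the proof is the same as that of Lemma~4.5 in \cite{TDM}, and what you have written out — splitting $\mathbb{M}_{\sigma}^{(i)}$ as $\mathbb{P}\,\mathbb{M}_{m-i-1}\mathbb{M}_{m-i}\,\mathbb{Q}$ (or the reversed order), commuting $\mathbb{P},\mathbb{Q}$ past $\mathcal{Q}_i^{\mathbb{B}},\mathcal{R}_i$, and applying relations (a)--(c) so the $\mathcal{T}_i$ terms cancel — is precisely that argument transplanted to the system-polynomial setting via the auxiliary matrices $\mathcal{Q}_i,\mathcal{R}_i,\mathcal{T}_i,\mathcal{D}_i$. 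Your closing check that the only excluded commutation pair $(\mathbb{M}_0,\mathbb{M}_m)$ never arises (since $m-i\le m-1$) is exactly the point that makes the adaptation legitimate.
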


\begin{proof} The proof is exactly the same as that of Lemma~4.5 in \cite{TDM}.
\end{proof}

It is now immediate that  a Fiedler pencil is a Rosenbrock linearization of $\mathcal{S}(\lam).$

\begin{theorem}[Rosenbrock linearization] Let $\mathcal{S}(\lam)$ be an $(n+r)\times (n+r)$ system polynomial (regular or singular) of degree $m.$
 Then a Fiedler pencil $\mathbb{L}_{\sigma}(\lambda)$ of the system polynomial $\mathcal{S}(\lam)$ is a Rosenbrock linearization of $\mathcal{S}(\lam)$.
\end{theorem}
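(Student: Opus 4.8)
The plan is to exploit the chain of auxiliary system pencils $\mathbb{L}_{\sigma}^{(1)}(\lambda), \mathbb{L}_{\sigma}^{(2)}(\lambda), \ldots, \mathbb{L}_{\sigma}^{(m)}(\lambda)$ introduced in Definition~\ref{syspendef} and the one-step equivalence already recorded in Lemma~\ref{lietli+1}. Recall that $\mathbb{L}_{\sigma}^{(1)}(\lambda) = \mathbb{L}_{\sigma}(\lambda)$ is the Fiedler pencil we start with, and that
$$\mathbb{L}_{\sigma}^{(m)}(\lambda) = \left[\begin{array}{c|c} -I_{(m-1)n} & 0 \\ \hline 0 & \mathcal{S}(\lam) \end{array}\right],$$
which, up to the harmless sign on the identity block, is precisely $I_{(m-1)n} \oplus \mathcal{S}(\lam)$. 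So the first and only substantive step is to invoke Lemma~\ref{lietli+1} iteratively: it gives $\mathbb{L}_{\sigma}^{(i)}(\lambda) \thicksim_{se} \mathbb{L}_{\sigma}^{(i+1)}(\lambda)$ for each $i = 1, \ldots, m-1$, via conjugation by $\mathcal{Q}_i^{\mathbb{B}}, \mathcal{R}_i$ (when $\sigma$ has a consecution at $m-i-1$) or by $\mathcal{R}_i^{\mathbb{B}}, \mathcal{Q}_i$ (when $\sigma$ has an inversion at $m-i-1$). Since all the $\mathcal{Q}_i$ and $\mathcal{R}_i$ are unimodular of the block-diagonal form $\diag(\,\cdot\,, I_r)$, each step is a system equivalence in the sense of the definition, with the $(2,2)$-block $A - \lambda E$ left untouched. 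Composing the $m-1$ steps and using transitivity of $\thicksim_{se}$ yields $\mathbb{L}_{\sigma}(\lambda) = \mathbb{L}_{\sigma}^{(1)}(\lambda) \thicksim_{se} \mathbb{L}_{\sigma}^{(m)}(\lambda) = I_{(m-1)n} \oplus \mathcal{S}(\lam)$.

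Two small bookkeeping points need attention. First, one must check that composing the left factors $\mathcal{Q}_i^{\mathbb{B}}$ or $\mathcal{R}_i^{\mathbb{B}}$ across all $i$ produces a single unimodular matrix polynomial $U(\lambda)$ of the form $\diag(\widetilde U(\lambda), I_r)$, and likewise that the right factors compose to $V(\lambda) = \diag(\widetilde V(\lambda), I_r)$; this is immediate because the set of matrices of this block-diagonal shape is closed under multiplication and under block transpose (Definition~\ref{btfrmf}), and $\mathcal{Q}_i, \mathcal{R}_i$ are unimodular with unimodular inverses of the same shape. Hence $U(\lambda)$ and $V(\lambda)$ are genuine $(nm+r) \times (nm+r)$ transformations that fix the last $r$ coordinates, exactly as required by the definition of Rosenbrock linearization. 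Second, the sign: $\mathbb{L}_{\sigma}^{(m)}(\lambda)$ carries $-I_{(m-1)n}$ in its leading block rather than $I_{(m-1)n}$, which is absorbed by one further multiplication by the constant unimodular matrix $\diag(-I_{(m-1)n}, I_n, I_r)$ on the left, again of the admissible shape.

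I do not expect any real obstacle here, since the heavy lifting has already been done in Lemma~\ref{lietli+1} (whose proof is deferred to~\cite{TDM}) and in the preceding lemma establishing the relations (a)--(c) among $\mathcal{Q}_i, \mathcal{R}_i, \mathcal{T}_i, \mathcal{D}_i$. The one place that deserves a sentence of care is the interaction at $i = m-1$, i.e.\ the last step involving $\mathbb{M}_0$: here the off-diagonal blocks $-e_m \otimes C$ and $-e_m^{T} \otimes B$ enter for the first time, and one must confirm — as was verified in the proof of the lemma, using $Q_{m-1}^{\mathcal{B}}(-e_m \otimes C) = -e_m \otimes C$ and $(-e_m^{T} \otimes B) M_1 R_{m-1} = -e_m^{T} \otimes B$ — that the conjugation by $\mathcal{Q}_{m-1}^{\mathbb{B}}, \mathcal{R}_{m-1}$ (or its inversion-case analogue) leaves these constant blocks unchanged, so that the resulting $\mathbb{L}_{\sigma}^{(m)}(\lambda)$ really is the extended system polynomial $I_{(m-1)n} \oplus \mathcal{S}(\lam)$ and not merely unimodularly equivalent to it in the unstructured sense. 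With that checked, the theorem follows, and it holds verbatim whether or not $\mathcal{S}(\lam)$, $P(\lam)$, or $A - \lambda E$ is regular, since nothing in the argument uses invertibility of any coefficient.
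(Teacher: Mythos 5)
Your proposal is correct and follows essentially the same route as the paper: the paper's proof likewise just chains the $m-1$ system equivalences $\mathbb{L}_{\sigma}^{(i)}(\lambda)\thicksim_{se}\mathbb{L}_{\sigma}^{(i+1)}(\lambda)$ from Lemma~\ref{lietli+1} and concludes $\mathbb{L}_{\sigma}(\lambda)\thicksim_{se} I_{(m-1)n}\oplus\mathcal{S}(\lam)$, with the real work already done in the auxiliary-matrix lemma. Your extra remarks (closure of the block-diagonal shape under composition, absorbing the sign $-I_{(m-1)n}$ by a constant unimodular factor, and the care at the $i=m-1$ step involving $\mathbb{M}_0$) are sound refinements of the same argument rather than a different approach.
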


\begin{proof} By Lemma~\ref{lietli+1}, we have $m-1$ system equivalences
\begin{equation}
\mathbb{L}_{\sigma}(\lambda) = \mathbb{L}_{\sigma}^{(1)}(\lambda) \thicksim_{se} \mathbb{L}_{\sigma}^{(2)}(\lambda) \thicksim_{se}\cdots \thicksim_{se} \mathbb{L}_{\sigma}^{(m)}(\lambda) = \left[
                                          \begin{array}{c|c}
                                            -I_{(m-1)n}   &  0 \\
                                              \hline
                                             0 & \mathcal{S}(\lam)
                                                \end{array}
                                                  \right], \label{uefl}
\end{equation}
where $\mathbb{L}_{\sigma}^{(i)}(\lambda)$ is as in Lemma~\ref{lietli+1}.  This shows that $\mathbb{L}_{\sigma}(\lambda) \thicksim_{se} I_{(m-1)n} \oplus \mathcal{S}(\lam).$ \end{proof}

The  $m-1$ system equivalences in (\ref{uefl}) provide the desired system equivalence that transforms $\mathbb{L}_{\sigma}(\lambda)$ to the extended system polynomial $I_{(m-1)n}\oplus \mathcal{S}(\lam)$  and can be constructed as follows.

\begin{corollary}\label{eouavflor}
Let $\mathbb{L}_{\sigma}(\lambda)$ be the Fiedler pencil of $\mathcal{S}(\lambda)$ associated with a bijection $\sigma$, and $\mathcal{Q}_{i}, \mathcal{R}_{i}$ for $i = 1, 2, \ldots m-1$, be as in Definition \ref{amr}. Then
\begin{equation}
\mathcal{U}(\lambda) \mathbb{L}_{\sigma}(\lambda)\mathcal{V}(\lambda) = \left[
                                                           \begin{array}{c|c}
                                                              -I_{(m-1)n} &  0  \\
                                                               \hline
                                                               0 & \mathcal{S}(\lam) \\
                                                                 \end{array}
                                                                  \right], \label{lr}
\end{equation}
where $\mathcal{U}(\lambda)$ and $\mathcal{V}(\lambda)$ are $(nm+r)\times (nm+r)$ unimodular system polynomials given by
$$\mathcal{U}(\lambda) := \mathcal{U}_{0}\mathcal{U}_{1}\cdots \mathcal{U}_{m-3}\mathcal{U}_{m-2}, \mbox{  with  } \mathcal{U}_{i} =
\begin{cases}
\mathcal{Q}_{m-(i+1)}^{\mathbb{B}}, & \text{if } \sigma \text{ has a consecution at } i, \\
\mathcal{R}_{m-(i+1)}^{\mathbb{B}}, & \text{if } \sigma \text{ has an inversion at } i,
\end{cases}
$$
$$\mathcal{V}(\lambda) := \mathcal{V}_{m-2}\mathcal{V}_{m-3}\cdots \mathcal{V}_{1}\mathcal{V}_{0}, \mbox{  with  } \mathcal{V}_{i} =
\begin{cases}
\mathcal{R}_{m-(i+1)}, & \text{if } \sigma \text{ has a consecution at } i, \\
\mathcal{Q}_{m-(i+1)}, & \text{if } \sigma \text{ has an inversion at } i.
\end{cases}
$$
\end{corollary}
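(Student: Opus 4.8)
The plan is to build $\mathcal{U}(\lambda)$ and $\mathcal{V}(\lambda)$ by composing, in the right order, the one-step system equivalences furnished by Lemma~\ref{lietli+1}, and then perform a change of index. By Definition~\ref{syspendef} we already know the two endpoints of the chain: $\mathbb{L}_{\sigma}^{(1)}(\lambda) = \mathbb{L}_{\sigma}(\lambda)$ and $\mathbb{L}_{\sigma}^{(m)}(\lambda) = -I_{(m-1)n}\oplus \mathcal{S}(\lambda)$. Lemma~\ref{lietli+1} says that for each $i = 1,\ldots,m-1$,
$$\mathbb{L}_{\sigma}^{(i+1)}(\lambda) = \mathcal{A}_{i}(\lambda)\,\mathbb{L}_{\sigma}^{(i)}(\lambda)\,\mathcal{B}_{i}(\lambda),$$
where $(\mathcal{A}_{i},\mathcal{B}_{i}) = (\mathcal{Q}_{i}^{\mathbb{B}},\mathcal{R}_{i})$ when $\sigma$ has a consecution at $m-i-1$ and $(\mathcal{A}_{i},\mathcal{B}_{i}) = (\mathcal{R}_{i}^{\mathbb{B}},\mathcal{Q}_{i})$ when $\sigma$ has an inversion at $m-i-1$. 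First I would telescope these $m-1$ identities to obtain
$$-I_{(m-1)n}\oplus \mathcal{S}(\lambda) = \mathbb{L}_{\sigma}^{(m)}(\lambda) = \big(\mathcal{A}_{m-1}\cdots \mathcal{A}_{1}\big)\,\mathbb{L}_{\sigma}(\lambda)\,\big(\mathcal{B}_{1}\cdots \mathcal{B}_{m-1}\big),$$
so that it remains only to identify $\mathcal{U}(\lambda) := \mathcal{A}_{m-1}\cdots \mathcal{A}_{1}$ and $\mathcal{V}(\lambda) := \mathcal{B}_{1}\cdots \mathcal{B}_{m-1}$ with the products in the statement.

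The second step is the substitution $j := m-1-i$. This turns a consecution (resp.\ inversion) ``at $m-i-1$'' into a consecution (resp.\ inversion) ``at $j$'', and turns the subscript $i$ in $\mathcal{Q}_{i}, \mathcal{R}_{i}$ into $m-1-j = m-(j+1)$, which is exactly the subscript appearing in the definitions of $\mathcal{U}_{j}$ and $\mathcal{V}_{j}$. Under this change of variable, $i$ running over $1,\ldots,m-1$ corresponds to $j$ running over $m-2,\ldots,0$; reading the product $\mathcal{A}_{m-1}\cdots \mathcal{A}_{1}$ from left to right therefore gives $\mathcal{U}_{0}\mathcal{U}_{1}\cdots \mathcal{U}_{m-2}$ (using $\mathcal{R}_{i}^{\mathbb{B}} = \mathcal{R}_{i}$ where relevant), and reading $\mathcal{B}_{1}\cdots \mathcal{B}_{m-1}$ gives $\mathcal{V}_{m-2}\cdots \mathcal{V}_{1}\mathcal{V}_{0}$; these are precisely the claimed formulas for $\mathcal{U}(\lambda)$ and $\mathcal{V}(\lambda)$.

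Finally I would note that $\mathcal{U}(\lambda)$ and $\mathcal{V}(\lambda)$ are unimodular system polynomials: each $\mathcal{Q}_{i}$ and $\mathcal{R}_{i}$ is unimodular by Definition~\ref{amr}, the block transpose of such a block-triangular unimodular system polynomial is again unimodular (its determinant, like that of $Q_{i}$ or $R_{i}$, is a nonzero constant), and a finite product of unimodular system polynomials is unimodular. The only genuinely delicate point is the index bookkeeping in the second step --- matching ``consecution/inversion at $m-i-1$'' in Lemma~\ref{lietli+1} with ``consecution/inversion at $i$'' in the statement, and checking that the order in which the factors $\mathcal{A}_{i}$ (resp.\ $\mathcal{B}_{i}$) accumulate yields exactly the left-to-right products $\mathcal{U}_{0}\cdots \mathcal{U}_{m-2}$ (resp.\ $\mathcal{V}_{m-2}\cdots \mathcal{V}_{0}$); everything else is a direct appeal to Lemma~\ref{lietli+1} and Definition~\ref{syspendef}.
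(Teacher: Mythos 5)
Your proposal is correct and follows essentially the same route as the paper: the corollary is exactly the telescoping of the $m-1$ system equivalences of Lemma~\ref{lietli+1} (the chain in (\ref{uefl})), and your index change $j = m-1-i$ correctly matches consecutions/inversions at $m-i-1$ with the factors $\mathcal{U}_j$, $\mathcal{V}_j$ and their ordering. The unimodularity remark is also fine, since each $\mathcal{Q}_i$, $\mathcal{R}_i$ and their block transposes have constant nonzero determinant.
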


The indexing of $\mathcal{U}_i$ and $\mathcal{V}_i$ factors in $\mathcal{U}(\lam)$ and $\mathcal{V}(\lam),$ respectively,  in Corollary~\ref{eouavflor} has been chosen for simplification of notation and has no other special significance.

\begin{exam} For $m=4$ and  $\mathbb{L}_{\sigma}(\lambda) = \lambda \mathbb{M}_{4} - \mathbb{M}_{2}\mathbb{M}_{0}\mathbb{M}_{1}\mathbb{M}_{3}$, by Corollary~\ref{eouavflor}, we have $(\mathcal{Q}_{3}^{\mathbb{B}}\mathcal{R}_{2}\mathcal{Q}_{1}^{\mathbb{B}})\mathbb{L}_{\sigma}(\lambda)(\mathcal{R}_{1}
\mathcal{Q}_{2}\mathcal{R}_{3}) = -I_{3n} \oplus \mathcal{S}(\lam).$  $\blacksquare$
\end{exam}

Observe that the invariant zeros of an  LTI system in SSF such as $\Sigma$ in (\ref{rassf}) can be computed in two steps. First, construct a Fiedler pencil $\L_{\sigma}(\lam)$ using Algorithm~\ref{alg1}. Second, solve the GEP $\L_{\sigma}(\lam) u =0$ by standard algorithm such as the QZ-algorithm. On the other hand, the decoupling zeros of the LTI system can be obtained by computing eigenvalues of the pencils in (\ref{dcouple}) using standard algorithm such as GUPTRI~\cite{demkog1,demkog2}. As for the transmission zeros of the LTI system, it is well known~\cite{rosenbrock70}  that they coincide with the invariant zeros when the LTI system is controllable and observable, see next section. Thus we have a direct method for  solving {\sc Problem-I}.

\section{Linearizations for rational matrix functions}
We now introduce ``linearization" of a rational matrix function for solving a rational eigenvalue problem.  Intuitively, a linearization of a rational matrix function $G(\lam)$ should be  a matrix pencil $L(\lam)$ of appropriate dimension (ideally the smallest dimension) such that $G(\lam)$ and $L(\lam)$ have identical ``zero structure" in $\C,$ that is, they have the same spectrum  and that  the  partial multiplicities of the zeros (eigenvalues and eigenpoles) of $G(\lam)$ are the same as those of $L(\lam)$. In other words, $\sp(G) = \sp(L)$ and $ \ind_{\phi}(\lam, L) = \ind_{\phi}(\lam, G)$ for each $\lam \in \sp(L).$  Additionally, a linearization $L(\lam)$ should allow an easy recovery of eigenvectors of $G(\lam)$ from those of $L(\lam).$ To achieve this goal, we proceed as follows.

 Let $G(\lam)\in \C(\lam)^{m\times n}$ be a rational matrix function. Then $G(\lam)$ is said to be {\bf proper} if $G(\lam) \rar G_0 \in \C^{m\times n}$ as $\lam \rar \infty.$ On the other hand, $G(\lam)$ is said to be  {\bf  strictly proper} if $G(\lam) \rar 0$ as $\lam \rar \infty$, see~\cite{rosenbrock70, vardulakis}. A rational matrix function $G(\lam)$ can be uniquely decomposed as \be\label{ratdcmp} G(\lam) = P(\lam) + Q(\lam),\ee where $P(\lam)$ is a matrix polynomial and $Q(\lam)$ is a strictly proper rational matrix function. We denote the degree of $P(\lam),$ the highest power of $\lam$ in $P(\lam),$ by $\deg(P).$  Recall that $\mathbf{SM}(G(\lam))$ denotes the Smith-McMillan form of a rational matrix function $G(\lam)$ and $ \mathbf{SF}(P(\lam))$ denotes the Smith form of a matrix polynomial $P(\lam).$ Also recall that $\psi_G(\lam)$ is the pole polynomial of $G(\lam).$ The degree of the pole polynomial $\psi_G(\lam)$ is called the {\bf least order} of the rational matrix function $G(\lam),$ see~\cite{vardulakis}.

\begin{definition}[Linearization for rational matrix function]
Let $G(\lam) \in \C(\lam)^{n \times n}$ be a rational matrix function such that $ G(\lam) = P(\lam) + Q(\lam),$ where $P(\lam)$ is a matrix polynomial and  $Q(\lam)$ is a strictly proper rational matrix function. Suppose that  $$\mathbf{SM}(G(\lam)) = \diag \left(\frac{\phi_{1}(\lam)}{\psi_{1}(\lam)}, \frac{\phi_{2}(\lam)}{\psi_{2}(\lam)}, \ldots, \frac{\phi_{k}(\lam)}{\psi_{k}(\lam)}, 0, \ldots, 0\right), $$ where $k= \nrank(G).$
Set $m := \max(\deg(P), 1)$ and $r := \deg (\psi_{G}),$ where $ \psi_{G}(\lam)$ is the pole polynomial of $G(\lam)$. Then an $(mn+r)\times (mn+r)$ matrix pencil $\mathbb{L}(\lam) = \lam\mathcal{X} + \mathcal{Y} $ is said to be a linearization of $G(\lam)$ if the Smith form of $\mathbb{L}(\lam)$ is given by
$$\mathbf{SF}(\mathbb{L}(\lam)) = \diag\left(I_{(m-1)n+r}, \phi_1(\lam), \cdots, \phi_k(\lam), 0, \ldots, 0\right).$$
%\left[
%                                    \begin{array}{c|cccc}
%                                      I_{(m-1)n+r} &  &  &  &  \\
%                                      \hline
%                                       & \phi_{1} &  &  &  \\
%                                       &  & \ddots &  &  \\
%                                       &  &  & \phi_{k} &  \\
%                                       &  &  &  & 0 \\
%                                    \end{array}
%                                  \right].$$
%\end{itemize}
\end{definition}

We show that a rational matrix function admits a linearization and that such a linearization can be constructed from a minimal state-space realization of the rational matrix function. A state-space realization of a rational matrix function $G(\lam)$ is an LTI system in SSF  for which $G(\lam)$ is the transfer function~\cite{rosenbrock70, vardulakis, APMA06,  kailath}.

\begin{definition}[State-space realization] A tuple $\Sigma := \left(  \lam E-A,\, B,\, C,\, P(\lam) \right)\in \C[\lam]^{r\times r} \times \C^{r\times n} \times \C^{m\times r} \times \C[\lam]^{m\times n}$ with $ E $ being nonsingular is said to be a {\bf realization} of a rational matrix function $G(\lam) \in \C(\lam)^{m\times n}$ if  $G(\lam) = P(\lam) + C(\lam E - A)^{-1}B.$ In such a case, $G(\lam)$ is said to be the {\bf transfer function} of the realization $\Sigma.$ The matrix polynomial $ \mathcal{S}(\lam) := \left[
                       \begin{array}{c|c}
                        P(\lam) & C \\
                        \hline
                         B & (A - \lam E) \\
                          \end{array}
                          \right]$ is said to be the {\bf Rosenbrock system polynomial} associated with the realization $\Sigma.$ The LTI system in SSF associated with the realization $\Sigma,$ which we also denote by $\Sigma,$ is given by
$$ %\label{rassf2}
\begin{array}{ll} \Sigma:  &
\begin{array}{ll}
 E \dot{x}(t) = A x(t) + Bu(t) \\
 y(t) = C x(t) + P(\frac{d}{dt}) u(t)
\end{array}
\end{array}
$$ for which $G(\lam)$ is the transfer function and $\mathcal{S}(\lam)$ is the Rosenbrock system polynomial.
 \end{definition}

We mention that the nonsingular matrix $E$ is can be chosen to be the identity matrix $I_r.$  Since  $G(\lam) = P(\lam)+Q(\lam)$ and  $Q(\lam)$ is a strictly proper, there exist ${r\times r}$ matrices  $A$ and $ E$ with $E$ being nonsingular,  $B \in \C^{r\times n}$ and $ C \in \C^{m\times r}$ such that $ Q(\lam) = C(\lam E- A)^{-1}B,$ see~\cite{rosenbrock70}.  Thus  $ G(\lam) = P(\lam) + C(\lam E - A)^{-1}B.$ Obviously, a state-space realization of $G(\lam)$ is not unique. We refer to~\cite{APMA06, kailath, rosenbrock70, vardulakis} for theory and algorithms on realizations of rational matrix functions.

A realization $\left(\lam E-A,\, B,\, C,\, P(\lam) \right)$ of $G(\lam)$ is said to be {\bf minimal} if the size of the pencil $\lam E- A$ is the smallest~\cite{rosenbrock70, vardulakis}. The minimality of a realization is characterized by controllability and observability  of the associated LTI system in SSF.

\begin{theorem}\cite{rosenbrock70, vardulakis}\label{minreal} Let  $\Sigma  := \left(  \lam E-A,\, B,\, C,\, P(\lam) \right)\in \C[\lam]^{r\times r} \times \C^{r\times n} \times \C^{m\times r} \times \C[\lam]^{m\times n}$ be a realization of $G(\lam).$   Then the following conditions are equivalent.
\begin{itemize}
\item[(a)] The realization  $\Sigma $ is minimal.

\item[(b)] The  LTI system associated with the realization $\Sigma$ is controllable and observable. Equivalently,  $$\rank\left(\left[\begin{matrix} \lam E -A & B\end{matrix}\right]\right) = r  \mbox{ and } \rank\left(\left[\begin{matrix} \lam E -A \\ C\end{matrix}\right]\right) = r \mbox{ for all } \lam \in \C.$$

\item[(c)] The size of $\lam E- A$ is equal to the least order of $G(\lam),$ that is, $ r = \deg(\psi_G),$ where $\psi_G(\lam)$ is the pole polynomial of $G(\lam).$
\end{itemize}
\end{theorem}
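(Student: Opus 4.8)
This statement is classical, going back to Rosenbrock and Vardulakis (see \cite{rosenbrock70, vardulakis}); here is the route I would take. The plan is to pass to the Markov parameters of the strictly proper part of $G(\lam)$ and its block Hankel matrix, and to extract both $(\mathrm{a})\Leftrightarrow(\mathrm{b})$ and the dimension count $(\mathrm{a})\Leftrightarrow(\mathrm{c})$ from it. First I would make two harmless reductions. Since $E$ is nonsingular, replacing $\left(\lam E - A,\, B,\, C,\, P(\lam)\right)$ by $\left(\lam I_r - E^{-1}A,\, E^{-1}B,\, C,\, P(\lam)\right)$ changes neither the size $r$, nor the transfer function, nor the rank conditions in (b) (left multiplication of $\left[\,\lam E - A \mid B\,\right]$ by $E^{-1}$ preserves rank, and dually for the observability pencil), so I may assume $E = I_r$. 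Moreover, since $C(\lam E - A)^{-1}B$ is strictly proper and the decomposition $G = P + Q$ in (\ref{ratdcmp}) is unique, the polynomial part $P(\lam)$ is the same for \emph{every} realization of $G(\lam)$; hence minimality concerns only the state-space size of a realization of the strictly proper function $Q(\lam) := G(\lam) - P(\lam)$, and I may take $P(\lam)\equiv 0$.

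Now introduce the Markov parameters $h_j := C A^j B$, $j\ge0$, the Laurent coefficients of $Q(\lam) = C(\lam I_r - A)^{-1}B = \sum_{j\ge0} h_j\lam^{-j-1}$; these depend only on $G$, not on the chosen realization. Form the block Hankel matrix $\mathcal{H} := \left[\, h_{i+j}\,\right]_{i,j\ge0}$, whose rank is already attained on the finite $r\times r$-block truncation by Cayley--Hamilton. For any realization $(\lam I - \widehat A, \widehat B, \widehat C)$ of $Q$ of size $\widehat r$ one has the factorization $\mathcal{H} = \mathcal{O}\,\mathcal{C}$, where $\mathcal{O}$ is the observability matrix with block rows $\widehat C, \widehat C\widehat A, \widehat C\widehat A^{2}, \dots$ and $\mathcal{C}$ is the controllability matrix with block columns $\widehat B, \widehat A\widehat B, \widehat A^{2}\widehat B, \dots$; hence $\rank(\mathcal{H}) \le \min\{\rank(\mathcal{O}), \rank(\mathcal{C})\} \le \widehat r$. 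Since, conversely, the Ho--Kalman construction yields a realization of size exactly $\rank(\mathcal{H})$, the minimal size over all realizations of $Q$ equals $\rank(\mathcal{H})$. This gives $(\mathrm{a})\Leftrightarrow(\mathrm{b})$: if the given realization (of size $r$) is minimal, then $r = \rank(\mathcal{H}) \le \rank(\mathcal{C}) \le r$ and likewise $r = \rank(\mathcal{H}) \le \rank(\mathcal{O}) \le r$, so $\rank(\mathcal{C}) = \rank(\mathcal{O}) = r$, which by the Popov--Belevitch--Hautus criterion is precisely the controllability/observability pair (equivalently, the rank conditions) in (b); conversely, if (b) holds then $\mathcal{O}$ has full column rank and $\mathcal{C}$ full row rank, so $\rank(\mathcal{H}) = \rank(\mathcal{O}\,\mathcal{C}) = r$ (as $\mathcal{O}$ is injective and $\mathcal{C}$ surjective onto $\C^{r}$), whence $r$ equals the minimal size and the realization is minimal.

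For $(\mathrm{a})\Leftrightarrow(\mathrm{c})$ it remains to identify $\rank(\mathcal{H})$ with the least order $\deg(\psi_G)$. The inequality $\deg(\psi_G)\le r$ for any realization follows from Jacobi's identity: every $j\times j$ minor of $(\lam I - A)^{-1}$ is a polynomial over $\det(\lam I - A)$, hence by Cauchy--Binet so is every maximal minor of $Q(\lam)$, and reading the pole polynomial off the Smith--McMillan form gives $\psi_G \mid \det(\lam I - A)$; the reverse inequality for a minimal realization is exactly Kronecker's theorem that the rank of the Hankel matrix of a strictly proper rational function equals its McMillan degree, so $\rank(\mathcal{H}) = \deg(\psi_G)$. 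Thus the minimal state-space size equals $\deg(\psi_G)$, i.e. a realization is minimal iff $r = \deg(\psi_G)$, which is $(\mathrm{a})\Leftrightarrow(\mathrm{c})$; combined with $(\mathrm{a})\Leftrightarrow(\mathrm{b})$ this completes the proof. I expect the genuinely substantive point to be the identification $\rank(\mathcal{H}) = \deg(\psi_G)$ together with the existence of a realization of that size: unlike the purely linear-algebraic Hankel factorization, it rests on the Smith--McMillan normal form and the realization theory of rational matrix functions. Since both that identification and the Ho--Kalman construction are thoroughly classical (\cite{rosenbrock70, vardulakis, kailath}), the proof may legitimately invoke them rather than re-derive them.
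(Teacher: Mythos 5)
The paper does not prove Theorem~\ref{minreal} at all: it is quoted as a classical result with citations to \cite{rosenbrock70, vardulakis}, so there is no in-paper argument to compare yours against. Your Hankel-matrix route (Markov parameters, the factorization $\mathcal{H}=\mathcal{O}\,\mathcal{C}$, Ho--Kalman, the PBH test, and Kronecker's theorem identifying Hankel rank with the McMillan degree) is a correct and standard proof, closer in spirit to \cite{kailath} than to the system-equivalence/decoupling-zero treatment of \cite{rosenbrock70}, and your two preliminary reductions (normalizing $E=I_r$ and discarding the polynomial part) are legitimate and cleanly justified.

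Two small points to tighten. First, when you set $P(\lam)\equiv 0$ you also use, for part (c), that the pole polynomial of $G$ coincides with that of its strictly proper part $Q=G-P$; this is true (adding a matrix polynomial, holomorphic at every finite point, does not alter the finite pole structure in the Smith--McMillan form), but it should be stated, since $\psi_G$ in (c) refers to $G$, not to $Q$. Second, in the bound $\deg(\psi_G)\le r$ you argue from the \emph{maximal} minors of $Q$; maximal-order minors alone do not control $\psi_G$ (cancellations can hide poles), and what the McMillan characterization of the denominators requires is that \emph{every} minor of \emph{every} order $j$ have denominator dividing $\det(\lam I_r-A)$ --- which your own Jacobi/Cauchy--Binet argument does deliver, since each $j\times j$ minor of $(\lam I_r-A)^{-1}$ is a complementary minor of $\lam I_r-A$ divided by $\det(\lam I_r-A)$. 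Alternatively, that paragraph is redundant: once Kronecker's theorem is invoked, $\deg(\psi_G)=\rank(\mathcal{H})\le r$ already follows from the factorization $\mathcal{H}=\mathcal{O}\,\mathcal{C}$.
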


A  realization of $G(\lam)$ which is not minimal can always be reduced to a minimal realization~\cite{rosenbrock70, kailath}.  Observe that if $\left(  \lam E-A,\, B,\, C,\, P(\lam) \right)$ is a minimal realization of $G(\lam)$ then the associated LTI system $\Sigma$ does not have input decoupling zeros as well as output decoupling zeros. In such a case,  $\lam \in \C$ is a pole of $G(\lam)$ if and only if $\lam$ is an  eigenvalue of the pencil $\lam E -A.$ Further, the zeros of $G(\lam)$ coincide with the eigenvalues of the Rosenbrock system polynomial $\mathcal{S}(\lam)$ associated with the LTI system $\Sigma.$  In other words, the transmission zeros of the  LTI system $\Sigma$ coincide with the invariant zeros of the system. This can be seen as follows.

\begin{theorem}\cite{rosenbrock70}\label{pmd} Let $ \mathbb{P}(\lam)$ be an $ (r+m)\times (r+n)$ matrix polynomial given by
  $$\mathbb{P}(\lam) := \left[
                            \begin{array}{c|c}
                              - T(\lam) & U(\lam) \\
                              \hline
                              V(\lam) & W(\lam) \\
                            \end{array}
                          \right],
$$ where $T(\lam) \in \C[\lam]^{r\times r}$ is regular and  $\rank \left(\left[\begin{matrix} -T(\lam) & U(\lam) \end{matrix}\right] \right) =  \rank \left(\left[\begin{matrix} -T(\lam) \\ V(\lam) \end{matrix}\right] \right) = r$ for all $ \lam \in \C.$ Consider  the $m\times n$ rational matrix function $G(\lam)$  given by $$ G(\lam) := W(\lam) + V(\lam) (T(\lam))^{-1}U(\lam).$$ Suppose that the Smith-McMillan form of $G(\lam)$  is given by
$$\mathbf{SM}(G(\lam)) = \diag \left(\phi_{1}(\lam)/ \psi_{1}(\lam), \ldots, \phi_{q}(\lam)/ \psi_{q}(\lam), 0_{m-q, n-q}\right). $$ Then the Smith forms of $T(\lam)$ and  $\mathbb{P}(\lam)$ are given by
\beano
 \mathbf{SF}(T(\lam)) &=& \diag\left(I_{r-q}, \psi_{q}(\lam), \psi_{q-1}(\lam)\ldots, \psi_{1}(\lam)\right), \\
\mathbf{SF}(\mathbb{P}(\lam)) &=& \diag \left(I_{r}, \phi_{1}(\lam), \phi_{2}(\lam)\ldots, \phi_{q}(\lam), 0_{m-q, n-q}\right).
\eeano
\end{theorem}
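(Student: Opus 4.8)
## Proof proposal for Theorem~\ref{pmd}

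The plan is to reduce the statement to the Smith--McMillan theory of $G(\lam)$ by performing block eliminations on $\mathbb{P}(\lam)$ over the ring of rational functions that are analytic at a prescribed point, and tracking how local invariant factors transform. Concretely, for each $\lam_0 \in \C$ I would work in the local ring $\mathcal{O}_{\lam_0}$ of rational functions analytic at $\lam_0$; both $T(\lam)$ and $\mathbb{P}(\lam)$ have entries in this ring (they are polynomial), and the rank conditions $\rank[-T(\lam)\;\; U(\lam)] = \rank[-T(\lam)^{\mathcal{B}}\;\; V(\lam)^{\mathcal{B}}]^{\mathcal{B}} = r$ for all $\lam$ guarantee that nothing degenerates. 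Since $T(\lam)$ is regular, $T(\lam)^{-1}$ exists over $\C(\lam)$, and the Schur complement identity
$$
\left[\begin{array}{c|c} I_r & 0 \\ \hline V(\lam)(T(\lam))^{-1} & I_m \end{array}\right]
\left[\begin{array}{c|c} -T(\lam) & U(\lam) \\ \hline V(\lam) & W(\lam) \end{array}\right]
\left[\begin{array}{c|c} I_r & (T(\lam))^{-1}U(\lam) \\ \hline 0 & I_n \end{array}\right]
= \left[\begin{array}{c|c} -T(\lam) & 0 \\ \hline 0 & G(\lam) \end{array}\right}
$$
holds over $\C(\lam)$. The two outer factors are unimodular over $\C(\lam)$ but not over $\C[\lam]$, so this alone does not give the Smith form; it only shows $\mathbb{P}(\lam)$ and $(-T(\lam))\oplus G(\lam)$ are equivalent as rational matrix functions, hence have the same Smith--McMillan form. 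The content of the theorem is that the invariant {\em pole} polynomials coming from $T(\lam)$ and the invariant {\em pole} polynomials of $G(\lam)$ exactly cancel, leaving only $\phi_1,\dots,\phi_q$ in the Smith form of the polynomial matrix $\mathbb{P}(\lam)$.

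The key steps, in order, would be: (1) Establish the rational equivalence $\mathbb{P}(\lam) \sim_{\C(\lam)} \diag(-T(\lam),\,G(\lam))$ via the Schur complement above, so $\mathbf{SM}(\mathbb{P}) = \mathbf{SM}(T)\oplus\mathbf{SM}(G)$ up to reordering. (2) Show $\mathbf{SF}(T(\lam)) = \diag(I_{r-q},\psi_q,\dots,\psi_1)$: since $T(\lam)$ is a regular polynomial matrix, its Smith--McMillan form equals its Smith form (no denominators), and one must check the invariant polynomials of $T$ are precisely the $\psi_i$. This is where the minimality/coprimeness hypotheses enter: the rank conditions say the pencils $[-T\;\;U]$ and $[-T^{\mathcal{B}}\;\;V^{\mathcal{B}}]$ have full row rank everywhere, which is exactly the condition that the realization $(T,U,V,W)$ of $G$ has no input- or output-decoupling zeros, forcing $\deg\det T(\lam)$ (counted with multiplicity at each point) to match the least order of $G$, i.e. $\deg(\psi_G)=\sum_i\deg\psi_i$, and moreover the pole structure of $G$ at each $\lam_0$ to coincide with the elementary-divisor structure of $T$ at $\lam_0$. (3) Combine: $\mathbf{SM}(\mathbb{P}) $ has invariant rational functions obtained by merging $\{\psi_i\}$ (as invariant factors of $-T$) with $\{\phi_i/\psi_i\}$; since $\mathbb{P}(\lam)$ is a polynomial matrix its Smith--McMillan form has trivial denominators, so all $\psi_i$ must cancel, and a counting/divisibility argument pins down the numerators as $I_r,\phi_1,\dots,\phi_q,0,\dots,0$.

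The main obstacle is step~(2) together with the cancellation bookkeeping in step~(3): one must argue {\em locally at each $\lam_0$} that the partial-pole multiplicities of $G$ at $\lam_0$ are identical to the partial multiplicities of $\lam_0$ as an eigenvalue of $T(\lam)$, using only the full-rank conditions on the two ``system pencils.'' The cleanest route is to invoke the characterization already recorded in Theorem~\ref{minreal} (minimal realization $\Leftrightarrow$ controllable and observable $\Leftrightarrow$ $r=\deg\psi_G$) and the standard fact that for a minimal realization the poles of $G$ with multiplicities are exactly the eigenvalues of $T(\lam)$ with their partial multiplicities; then the local Smith-form computation at $\lam_0$ over $\mathcal{O}_{\lam_0}$ using the block row/column operations above makes the cancellation transparent, because the unimodular-over-$\mathcal{O}_{\lam_0}$ operations on $\mathbb{P}$ produce a block-diagonal form whose diagonal is governed by $\min$/$\max$ of the valuations of $\phi_i$ and $\psi_i$, which collapse to $\phi_i$ precisely when $\psi_i$ divides nothing extra. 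Since this is the classical Rosenbrock result, I would cite \cite{rosenbrock70} for the polynomial-system-matrix identities and only spell out the Schur-complement reduction and the local valuation argument in detail.
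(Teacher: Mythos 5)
First, a point of comparison: the paper offers no proof of this statement at all — Theorem~\ref{pmd} is quoted from Rosenbrock \cite{rosenbrock70} as a known result — so your proposal must stand on its own, and as written it has a genuine gap. The gap is in step (1) and it propagates into step (3). The Schur-complement identity only shows that $\mathbb{P}(\lam)$ and $\diag(-T(\lam),G(\lam))$ are equivalent over the field $\C(\lam)$, i.e.\ via invertible \emph{rational} transformations, and such equivalence preserves only the normal rank; it does not preserve the Smith--McMillan form, which is canonical only under unimodular \emph{polynomial} equivalence. Your own conclusion exposes the problem: $\mathbb{P}(\lam)$ is a polynomial matrix, so its Smith--McMillan form has trivial denominators, whereas $\mathbf{SM}(T)\oplus\mathbf{SM}(G)$ contains the denominators $\psi_i$; two matrices with different Smith--McMillan forms cannot ``have the same Smith--McMillan form,'' and there is no cancellation mechanism across diagonal entries of a canonical form, since each $\phi_i/\psi_i$ is already in lowest terms. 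So the ``merging and cancellation bookkeeping'' of step (3) rests on a false premise rather than on a fixable accounting issue. The local version of the same move fails at exactly the points that matter: at a $\lam_0$ with $\det T(\lam_0)=0$ the outer Schur factors involve $T(\lam)^{-1}$ and are not invertible over the local ring $\mathcal{O}_{\lam_0}$, so they give no local equivalence there (away from the zeros of $\det T$ the argument is fine, but those are the easy points).

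Step (2) then essentially assumes the hard half of the theorem: the ``standard fact'' that for a minimal realization the poles of $G$ with their partial multiplicities are exactly the zeros of $T(\lam)$ with theirs \emph{is} the assertion $\mathbf{SF}(T)=\diag(I_{r-q},\psi_q,\ldots,\psi_1)$, and Theorem~\ref{minreal} cannot be invoked for it, since that theorem concerns the pencil case $T(\lam)=\lam E-A$, not a general regular polynomial $T(\lam)$. What is actually needed — and what Rosenbrock's treatment supplies — is to use the two full-rank hypotheses as left coprimeness of $(T,U)$ and right coprimeness of $(T,V)$, and exploit the resulting Bezout-type identities (over $\C[\lam]$, or over each $\mathcal{O}_{\lam_0}$ for a local argument) to build transformations that are genuinely unimodular, respectively invertible over the local ring; equivalently, one invokes strict system equivalence between least-order polynomial system matrices sharing the transfer function $G(\lam)$, under which the Smith forms of both $T(\lam)$ and $\mathbb{P}(\lam)$ are invariant, and exhibits one least-order realization built directly from $\mathbf{SM}(G)$ having the stated Smith forms. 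Without that coprimeness ingredient, your sketch at the zeros of $\det T$ reduces to citing the theorem itself.
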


For a minimal realization of a rational matrix function we have the following.

\begin{theorem} \label{zpspespr} Let $\left(  \lam E-A,\, B,\, C,\, P(\lam) \right)\in \C[\lam]^{r\times r} \times \C^{r\times n} \times \C^{m\times r} \times \C[\lam]^{m\times n}$ be a minimal realization of $G(\lam) \in \C(\lam)^{m\times n}$ and $\mathcal{S}(\lam)$  be the associated Rosenbrock system polynomial. Consider the pencil $L(\lam) := \lam E - A$ and suppose that $$\mathbf{SM}(G(\lam)) = \diag \left(\frac{\phi_{1}(\lam)}{\psi_{1}(\lam)}, \frac{\phi_{2}(\lam)}{\psi_{2}(\lam)}, \ldots, \frac{\phi_{k}(\lam)}{\psi_{k}(\lam)}, 0, \ldots, 0\right). $$
\begin{itemize}

\item [(a)] Then the Smith form $\mathbf{SF}(L(\lam))$ of $L(\lam)$ is given by $$\mathbf{SF}(L(\lam)) = \diag \left(I_{r-k}, \psi_{k}, \psi_{k-1}, \ldots, \psi_{1} \right)$$

\item [] and the Smith form $\mathbf{SF}(\mathcal{S}(\lam))$ of $\mathcal{S}(\lam)$ is given by$$\mathbf{SF}(\mathcal{S}(\lam)) = \diag \left(I_{r}, \phi_{1}, \phi_{2}, \ldots, \phi_{k}, 0, \ldots, 0 \right). $$

\item [(b)] We have  $\poles(G) = \sp(L)$ and  $\ind_{\psi}(\lam, G) = \ind_{\phi}(\lam, L)$ for $\lam \in \sp(L). $

\item [(c)] We have  $\sp(G) = \sp(\mathcal{S})$ and $\ind_{\phi}(\lam, G) = \ind_{\phi}(\lam, \mathcal{S})$ for $\lam \in \sp(\mathcal{S}).$

\item [(d)] We have  $\mathrm{Eip}(G) = \sp(\mathcal{S}) \cap \sp(L) $ and $\mathrm{Eig}(G) = \sp(\mathcal{S}) \cap ( \C\setminus \sp(L)).$

%\item [(e)] We have $\mathrm{Eig}(G) = \sp(\mathcal{S}) \cap \rho(L), = \sp_{e}(G), $ where $\rho(L)$ is the resolvent of $L$.

%\item [(f)] $\poles(G) \subset \textrm{Sp(L)}, $and the equality holds if $G$ is minimal.

\end{itemize}

\end{theorem}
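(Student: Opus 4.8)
The plan is to prove part (a) directly from Theorem~\ref{pmd}, and then to read parts (b), (c), (d) off the two Smith forms in (a) together with the definitions of $\sp$, $\poles$, the multiplicity indices, and Theorem~\ref{eigpole}. Thus essentially all the work is in (a).

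For part (a), the idea is to massage $\mathcal{S}(\lam) = \left[\begin{smallmatrix} P(\lam) & C \\ B & A-\lam E\end{smallmatrix}\right]$ into the shape required by Theorem~\ref{pmd}. Interchanging the two block rows and the two block columns of $\mathcal{S}(\lam)$ — a transformation carried out by constant permutation matrices, hence unimodular and Smith-form preserving — produces $\widetilde{\mathcal{S}}(\lam) := \left[\begin{smallmatrix} A-\lam E & B \\ C & P(\lam)\end{smallmatrix}\right]$, so $\mathbf{SF}(\mathcal{S}(\lam)) = \mathbf{SF}(\widetilde{\mathcal{S}}(\lam))$. Now $\widetilde{\mathcal{S}}(\lam)$ is precisely the pencil $\mathbb{P}(\lam)$ of Theorem~\ref{pmd} with $T(\lam) := \lam E - A$ (so that $-T(\lam) = A-\lam E$), $U(\lam) := B$, $V(\lam) := C$, $W(\lam) := P(\lam)$; the associated rational matrix $W + VT^{-1}U = P(\lam) + C(\lam E-A)^{-1}B$ is $G(\lam)$, and $T(\lam)$ is regular because $E$ is nonsingular. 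The two full-rank hypotheses $\rank[\,-T(\lam)\ \ U(\lam)\,] = \rank\left[\begin{smallmatrix}-T(\lam)\\ V(\lam)\end{smallmatrix}\right] = r$ for all $\lam \in \C$ are exactly the controllability and observability conditions of Theorem~\ref{minreal}(b), which hold since the realization is minimal (negating block rows or columns leaves the rank unchanged, so $[\,-T(\lam)\ \ U(\lam)\,]$ is full rank iff $[\,\lam E-A\ \ B\,]$ is, and similarly for the column version). Theorem~\ref{pmd}, with $q = k = \nrank(G)$, then yields $\mathbf{SF}(T(\lam)) = \diag(I_{r-k},\psi_k,\ldots,\psi_1)$ and $\mathbf{SF}(\mathbb{P}(\lam)) = \diag(I_r,\phi_1,\ldots,\phi_k,0,\ldots,0)$; since $\mathbf{SF}(L(\lam)) = \mathbf{SF}(\lam E-A) = \mathbf{SF}(T(\lam))$ and $\mathbf{SF}(\mathcal{S}(\lam)) = \mathbf{SF}(\widetilde{\mathcal{S}}(\lam)) = \mathbf{SF}(\mathbb{P}(\lam))$, part (a) follows.

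For part (b), from $\mathbf{SF}(L(\lam)) = \diag(I_{r-k},\psi_k,\ldots,\psi_1)$ the zero polynomial of the regular matrix polynomial $L(\lam)$ is $\phi_L(\lam) = \prod_{j=1}^{k}\psi_j(\lam) = \psi_G(\lam)$, so by the definitions in (\ref{spec}) we get $\sp(L) = \{\lam : \psi_G(\lam)=0\} = \poles(G)$; moreover, reading the exponent of $(\lam-\lam_0)$ off each invariant factor and using the chain $\psi_k \mid \cdots \mid \psi_1$ shows the nonzero components of $\ind_\phi(\lam_0,L)$ coincide with $\ind_\psi(\lam_0,G)$. Part (c) is the same argument applied to $\mathcal{S}(\lam)$ with the $\phi_j$ in place of the $\psi_j$: $\mathbf{SF}(\mathcal{S}(\lam)) = \diag(I_r,\phi_1,\ldots,\phi_k,0,\ldots,0)$ gives $\phi_{\mathcal{S}}(\lam) = \prod_{j=1}^{k}\phi_j(\lam) = \phi_G(\lam)$, hence $\sp(\mathcal{S}) = \sp(G)$ and $\ind_\phi(\lam,\mathcal{S}) = \ind_\phi(\lam,G)$ for $\lam \in \sp(\mathcal{S})$.

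Finally, part (d) is immediate from Theorem~\ref{eigpole} together with (b) and (c): $\lam \in \mathrm{Eip}(G)$ if and only if $\phi_G(\lam) = 0$ and $\psi_G(\lam) = 0$, that is, if and only if $\lam \in \sp(\mathcal{S}) \cap \sp(L)$; and $\lam \in \mathrm{Eig}(G)$ if and only if $\phi_G(\lam) = 0$ and $\psi_G(\lam) \neq 0$, that is, if and only if $\lam \in \sp(\mathcal{S}) \cap (\C\setminus\sp(L))$. The only genuine obstacle is the bookkeeping in part (a): one must check that the block permutation and the sign identity $A-\lam E = -(\lam E-A)$ are harmless for Smith forms (they are, since invariant factors are taken monic) and that minimality really supplies the two full-rank conditions Theorem~\ref{pmd} needs; once (a) is in hand, (b)--(d) are just a matter of unwinding the definitions of the zero and pole polynomials, the multiplicity indices, and $\mathrm{Eig}$, $\mathrm{Eip}$.
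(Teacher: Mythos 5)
Your proposal is correct and follows essentially the same route as the paper: the paper's own proof simply invokes Theorem~\ref{minreal} and Theorem~\ref{pmd} for part (a) and then derives (b)--(d) from (a) together with Theorem~\ref{eigpole}, exactly as you do. Your only addition is to spell out the bookkeeping the paper leaves implicit (the block row/column permutation identifying $\mathcal{S}(\lam)$ with $\mathbb{P}(\lam)$, the harmlessness of the sign, and that minimality supplies the two full-rank hypotheses), all of which is accurate.
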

\begin{proof}
Since the realization is minimal, the results in  part(a) follow from Theorem~\ref{minreal} and Theorem~\ref{pmd}. On the other hand, the results in parts (b), (c) and (d) follow from part (a) and Theorem~\ref{eigpole}.
\end{proof}

We now show that Fiedler pencils of the Rosenbrock system polynomial associated with a minimal realization of $G(\lam)$ are in fact  linearizations of $G(\lam).$

\begin{theorem}[Linearization] Let $\left(  \lam E-A,\, B,\, C,\, P(\lam) \right)\in \C[\lam]^{r\times r} \times \C^{r\times n} \times \C^{n\times r} \times \C[\lam]^{n\times n}$ be a minimal realization of $G(\lam) \in \C(\lam)^{n\times n}$ and $\mathcal{S}(\lam)$  be the associated Rosenbrock system polynomial. Suppose that $m$ is the degree of $P(\lam).$   Let $\mathbb{L}_{\sig}(\lam)$ be the Fiedler pencil of  $\mathcal{S}(\lam)$ associated with a bijection $\sigma : \{0, 1, \ldots, m-1\} \rar \{1, 2, \ldots, m\}.$  Then $\L_{\sigma}(\lam)$ is a linearization of the rational matrix function $G(\lam).$
\end{theorem}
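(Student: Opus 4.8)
The plan is to read off the Smith form of $\mathbb{L}_\sigma(\lambda)$ from two results already established: the theorem that a Fiedler pencil is a Rosenbrock linearization of $\mathcal{S}(\lam)$, and Theorem~\ref{zpspespr}(a), which computes $\mathbf{SF}(\mathcal{S}(\lam))$ for a minimal realization. First I would invoke that Rosenbrock linearization theorem --- more precisely Corollary~\ref{eouavflor} --- to produce unimodular system polynomials $\mathcal{U}(\lambda) = \diag(U(\lambda), I_r)$ and $\mathcal{V}(\lambda) = \diag(V(\lambda), I_r)$, with $U(\lambda)$ and $V(\lambda)$ unimodular $mn\times mn$ matrix polynomials, such that $\mathcal{U}(\lambda)\,\mathbb{L}_\sigma(\lambda)\,\mathcal{V}(\lambda) = (-I_{(m-1)n})\oplus\mathcal{S}(\lam)$. (If $m=1$ the Fiedler pencil is $\mathcal{S}(\lam)$ itself and the statement is immediate, so I may assume $m\ge 2$.)

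Next I would note that $\mathcal{U}(\lambda)$ and $\mathcal{V}(\lambda)$, being block diagonal with a unimodular block and an identity block, have constant nonzero determinant and hence are unimodular matrix polynomials in the ordinary sense; thus the identity above is an ordinary unimodular equivalence. Since unimodular equivalence preserves the Smith form, and since replacing the block $-I_{(m-1)n}$ by $I_{(m-1)n}$ is itself such an equivalence, this gives $\mathbf{SF}(\mathbb{L}_\sigma(\lambda)) = \mathbf{SF}\big(I_{(m-1)n}\oplus\mathcal{S}(\lam)\big)$. Using minimality of the realization, Theorem~\ref{zpspespr}(a) supplies $\mathbf{SF}(\mathcal{S}(\lam)) = \diag(I_r,\phi_1,\ldots,\phi_k,0,\ldots,0)$, where $\phi_1,\ldots,\phi_k$ are the invariant zero polynomials of $G(\lam)$ and $k=\nrank(G)$. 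Forming the direct sum with $I_{(m-1)n}$ and collecting all the unit invariant factors at the front --- legitimate because the $\phi_i$ are monic with $\phi_i\mid\phi_{i+1}$, so the resulting diagonal is already in Smith normal order once the units are moved up --- I would conclude
$$\mathbf{SF}(\mathbb{L}_\sigma(\lambda)) = \diag\big(I_{(m-1)n+r},\, \phi_1(\lambda),\ldots,\phi_k(\lambda),\, 0,\ldots,0\big).$$

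Finally I would match this with the definition of a linearization of $G(\lam)$. The pencil $\mathbb{L}_\sigma(\lambda)$ has size $(mn+r)\times(mn+r)$ with $m=\deg(P)$ (so $m=\max(\deg(P),1)$ in the notation of that definition), and, since the realization is minimal, Theorem~\ref{minreal}(c) tells us that $r$, the size of $\lambda E-A$, equals the least order $\deg(\psi_G)$ of $G(\lam)$; the Smith form displayed above is therefore exactly the one demanded, so $\mathbb{L}_\sigma(\lambda)$ is a linearization of $G(\lam)$. I do not foresee a genuine obstacle: everything has been prepared in advance, and the only two points deserving a line of care are that system equivalence is a special case of unimodular equivalence (hence leaves the Smith form unchanged) and that adjoining an identity block to a matrix polynomial merely adds trivial invariant factors.
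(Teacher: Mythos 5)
Your proposal is correct and follows essentially the same route as the paper: invoke the Rosenbrock-linearization property of the Fiedler pencil to get a system (hence unimodular) equivalence with $I_{(m-1)n}\oplus\mathcal{S}(\lam)$, then use Theorem~\ref{zpspespr}(a) for $\mathbf{SF}(\mathcal{S}(\lam))$ and read off the Smith form of $\mathbb{L}_\sigma(\lam)$. The extra care you take (the $m=1$ case, the sign $-I_{(m-1)n}$, and checking $r=\deg(\psi_G)$ via Theorem~\ref{minreal}) only tidies up details the paper leaves implicit.
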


\begin{proof}
Suppose that the Smith form of $G(\lam)$ is given by $$\mathbf{SM}(G(\lam)) = \diag \left(\frac{\phi_{1}(\lam)}{\psi_{1}(\lam)}, \frac{\phi_{2}(\lam)}{\psi_{2}(\lam)}, \ldots, \frac{\phi_{k}(\lam)}{\psi_{k}(\lam)}, 0, \ldots, 0\right). $$  By Theorem \ref{zpspespr}, the Smith form $\mathbf{SF}(\mathcal{S}(\lam))$ of $\mathcal{S}(\lam)$ is given by $$\mathbf{SF}(\mathcal{S}(\lam)) = \diag \left(I_{r}, \phi_{1}, \phi_{2}, \ldots, \phi_{k}, 0, \ldots, 0 \right). $$
Since $\mathbb{L}_{\sig}(\lam)$ is a Rosenbrock linearization of $\mathcal{S}(\lam),$  there are  $nm \times nm$ unimodular matrix polynomials $U(\lam)$ and $V(\lam)$ such that $$\left[
         \begin{array}{c|c}
           U(\lam) & 0 \\
           \hline
           0 & I_r \\
         \end{array}
       \right] \mathbb{L}_{\sigma}(\lam) \left[
                                    \begin{array}{c|c}
                                     V(\lam) & 0 \\
                                     \hline
                                     0 & I_r \\
                                \end{array}
                               \right] = \left[
                                           \begin{array}{c|c}
                                             I_{(m-1)n} & 0 \\
                                             \hline
                                             0 &  \mathcal{S}(\lam) \\
                                           \end{array}
                                         \right].$$
 As $\mathcal{S}(\lam)$ is unimodularly equivalent to $\mathbf{SF}(\mathcal{S}(\lam)),$ it follows that $\L_{\sigma}(\lam)$ is unimodularly equivalent to $\diag \left(I_{(m-1)n+r}, \phi_{1}, \phi_{2}, \ldots, \phi_{k}, 0, \ldots, 0 \right).$ Hence the Smith form $\mathbf{SF}(\mathcal{\L_{\sigma}}(\lam))$ is given by
 $$\mathbf{SF}(\mathbb{L}_{\sig}(\lam)) = \diag \left(I_{(m-1)n+r}, \phi_{1}, \phi_{2}, \ldots, \phi_{k}, 0, \ldots, 0 \right) $$ proving that   $\mathbb{L}_{\sig}(\lam)$ is a linearization of $G(\lam)$.
\end{proof}

Thus we conclude that the zeros of a rational matrix function $G(\lam) \in \C(\lam)^{n\times n}$ in general and the eigenvalues of $G(\lam)$ in particular can be computed by solving the generalized eigenvalue problem for a Fiedler pencil of the Rosenbrock system polynomial associated with a minimal realization of $G(\lam).$ Hence a direct method for solving a rational eigenvalue problem $G(\lam) u = 0$ can be summed up as follows.

\begin{algorithm}[H]
\caption{ A direct method for solving an REP  $G(\lam) u =0.$ }
\label{alg2}
\begin{enumerate}
\item Compute a minimal realization $\Sigma := \left(  \lam E-A,\, B,\, C,\, P(\lam) \right)$ of $G(\lam).$
\item Compute a Fiedler pencil $\L_{\sigma}(\lam) $ of the Rosenbrock system polynomial associated with $\Sigma$ using Algorithm~\ref{alg1}.
\item Solve the generalized eigenvalue problem $ \L_{\sigma}(\lam) v = 0.$
\end{enumerate}

%\end{algorithmic}
\end{algorithm}

Note that the eigenvalues of $\L_{\sigma}(\lam)$ are precisely the eigenvalues and eigenpoles of $G(\lam),$ that is, $\sp(G) = \sp(\L_{\sigma}).$  Also note that the eigenpoles of $G(\lam)$ are those eigenvalues of $\L_{\sigma}(\lam)$ which are also eigenvalues of the pencil $L(\lam) := A - \lam E,$ that is, $ \mathrm{Eip}(G) = \sp(\L_{\sigma})\cap \sp(L).$  We mention that the pencil $\L_{\sigma}(\lam)$ allows easy recovery of eigenvectors of $G(\lam)$ from those of $\L_{\sigma}(\lam)$ which is considered in a forthcoming paper.

\section{Conclusions and future work} Computation of zeros of a linear time-invariant (LTI) system is a major task in Linear Systems Theory. For computing zeros of an LTI system $\Sigma$ in SSF, we have introduced Rosenbrock linearizations, which are trimmed structured linearizations,  of the Rosenbrock system polynomial $\mathcal{S}(\lam)$ associated with the LTI system $\Sigma.$ We have introduced Fiedler-like pencils for the Rosenbrock system polynomial $\mathcal{S}(\lam)$ and have shown that the Fiedler-like pencils are in fact Rosenbrock linearizations of $\mathcal{S}(\lam).$ For solving rational eigenvalue problems, we have introduced linearizations of a rational matrix function by utilizing a minimal state-space realization of the rational matrix function. We have introduced Fiedler-like pencils for a rational matrix function  by embedding the rational eigenproblem into an LTI system in SSF which is controllable as well as observable and have shown that the Fiedler-like pencils are in fact linearizations of the rational matrix function. Other important issues such as the eigenvector recovery of a rational matrix function from that of its linearizations, structured preserving Fiedler-like pencils (such as generalized Fiedler pencils and generalized Fiedler pencils with repetition) for Rosenbrock system polynomials and rational matrix functions will be considered in forthcoming papers.

%
%
%\bibliographystyle{plain}
%\bibliography{ref}

%
%
%\bibitem{demkog1}  {\sc J. Demmel and B. Kagstrom, } {\em
%    The generalized Schur decomposition of an arbitrary pencil ${A} - \lambda {B}$: Robust software with error bounds and applications. Part I: Theory and algorithms,}
%    {ACM Trans. Math. Software, 19(1993), pp. 160-174.}
%
%\bibitem{demkog2} {\sc  J. Demmel and B. Kagstrom,}  {\em  The generalized Schur decomposition of an arbitrary pencil ${A} - \lambda {B}$: Robust software with error bounds and applications. Part II: Software and applications,} { ACM Trans. Math. Software, 19(1993), pp.175-201.}

%
%\end{document}
%
%

\end{document}